\DeclareMathAlphabet{\pazocal}{OMS}{zplm}{m}{n}
\newcommand{\R}{\mathbb{R}}
\newcommand{\s}{\mathbb{S}}
\newcommand{\D}{\mathbb{D}}
\newcommand{\C}{\pazocal{C}}
\newcommand{\Imm}{\mathrm{Im}}
\newcommand{\h}{\mathbb{H}}
\newcommand{\Vol}{\mathrm{V} \mathrm{o} \mathrm{l}}
\newcommand{\Diam}{\mathrm{D} \mathrm{i} \mathrm{a} \mathrm{m}}
\newcommand{\ein}{\mathrm{E} \mathrm{i} \mathrm{n}}
\newcommand{\so}{\mathrm{S} \mathrm{O}}
\newcommand{\Ker}{\mathrm{K} \mathrm{e} \mathrm{r}}
\newcommand{\Cc}{\mathcal{C}}
\newlength\tindent
\renewcommand{\indent}{\hspace*{\tindent}}
\theoremstyle{definition}
\newtheorem{rem}{Remark}[section]
\newtheorem{defn}[rem]{Definition}
\newtheorem{exmp}[rem]{Example}
\theoremstyle{plain}
\newtheorem{theo}[rem]{Theorem}
\newtheorem{prop}[rem]{Proposition}
\newtheorem{coro}[rem]{Corollary}
\newtheorem{lemma}[rem]{Lemma}
\newtheorem*{idefn}{Definition}%
\newtheorem*{iprop}{Proposition}%
\newtheorem*{itheo}{Theorem}%
\begin{document}

\title{Global hyperbolicity in higher signature}
\author{Roméo Troubat}
\maketitle

\begin{abstract}
\noindent We provide a generalization of global hyperbolicity in pseudo-Riemannian spaces of signature $(p,q)$ for $p \geqslant q \geqslant 2$. We generalize a number of well known results of globally hyperbolic Lorentzian spaces to their pseudo-Riemannian counterpart, including a result of topological structure and a result on the pseudo-Riemannian Plateau problem. Finally, we generalise a result by Mess by showing that the representations in $\so_0(p,q+1)$ stabilizing an achronal non-purely lightlike $(p-1)$-sphere in $\partial \mathbb{H}^{p,q}$ are exactly the holonomies of maximal globally hyperbolic pseudo-Riemannian spacetimes of constant curvature $-1$ with complete Cauchy surfaces and convex timelike submanifolds.
\end{abstract}

\tableofcontents

\section*{Introduction}

The study of Lorentzian geometry on smooth manifolds often takes place through the study the causal paths, given that a Lorentzian manifold $M$ has a suitable notion of causality. The strongest notion of causality one may ask is that $M$ is \emph{globally hyperbolic}. When $M$ is strongly causal, it was shown by Geroch in \cite{Gerochsplittingtheorem} that being globally hyperbolic can be defined by three equivalent definition. The first one is the existence of a \emph{Cauchy surface}, a subset of $M$ through which every inextendible causal path passes exactly once. The second one is the existence of a \emph{Cauchy time function}, a continuous map from $M$ to $\R$ which restricts into a bijection on any inextendible causal path. The third one is the compactness of the set of causal paths between any two points up to reparametrization. 

Global hyperbolicity is a very convenient setting to work in; it guarantees the existence of causal geodesics between any pair of causally related points, it implies strong topological result on the spacetime, and other interesting properties. \\

Instead of a Lorentzian metric, one may want to consider a metric of signature $(p,q)$ where $\min(p,q) \geqslant 2$, which we shall call a higher signature pseudo-Riemannian metric. Geometry in higher signature has been less studied than its Lorentzian counterpart ; one reason for this is the lack of physical interpretation. Most of the tools developped for Lorentzian geometry fail in this new setting, mostly because of the connectedness the set $(g_{p,q} < 0)$ in $(\R^{p,q}, g_{p,q})$ which does not yield a notion of time orientation for smooth paths. \\

Assume $(M,g)$ is a pseudo-Riemannian space of signature $(p,q)$, let $N_0$ be a smooth manifold of dimension $q$ and let $f : N_0 \rightarrow M$ be an immersion from $N_0$ to $M$. We will say that $f$ is timelike (resp. causal) when for each $x$ in $N_0$, the restriction of the metric to the image of $df_x$ is negative (resp. non-positive). Like in Lorentzian geometry, we show in section 3.3 that this notion can be extended to continuous maps from $N_0$ to $M$ which we will call \emph{causal maps}. We will say that such a map is \emph{inextendible} if for any path $c : I \rightarrow N_0$ inextendible in $N_0$ (meaning not converging in its extremities), the path $f \circ c$ is inextendible in $M$. With those new definitions, one may define a notion of Cauchy surfaces in higher signature.

\begin{idefn}
    A subset $S$ of $M$ is a \emph{Cauchy surface} if for any inextendible causal map $f : N_0 \rightarrow M$, there exists a unique $x$ in $N_0$ such that $f(x) \in S$.
\end{idefn}

The generalization of Cauchy time functions also becomes quite clear.

\begin{idefn}
A continuous map $T : M \rightarrow N$ where $N$ is a $q$-dimensional manifold is a \emph{Cauchy time function} if for any inextendible causal map $f : N_0 \rightarrow M$, the map $T \circ f : N_0 \rightarrow N$ is a homeomorphism.
\end{idefn}

We give a main class of examples of spacetimes admitting a Cauchy time function.

\begin{iprop}
    Let $T : (M^{p+q}, g_M) \rightarrow (N^q, g_N)$ be a complete Riemannian submersion where $N$ is simply connected and let $g$ be the $(p,q)$ metric defined on $M$ by the splitting $Ker(d T) \oplus (Ker(d T))^{\perp}$ where each fiber is positive and its orthogonal is negative, i.e $g = g_M - 2 T^* g_N$. Then the map $T : M \rightarrow N$ is a Cauchy time function.
\end{iprop}

In particular, while Lorentzian globally hyperbolic spacetimes are always homeomorphic to products $S \times \R$ where $S$ is any Cauchy surface, there exists spacetimes $M$ admitting a Cauchy time function $T : M \rightarrow N$ where $T$ is any fiber bundle with simply connected basis. We prove two topological results on spacetimes admitting a Cauchy time function akin to the one given by Geroch in the Lorentzian case.

\begin{theo}\label{theo:topostructure1}
    Let $M$ be a spacetime with Cauchy time function $T : M \rightarrow N$, assume that $M$ admits a foliation $F$ by causal spaces of dimension $q$ and let $P$ be a level set of $T$. Then there exists a homeomorphism $M \simeq P \times N$ such that that $T$ is the projection $pr_2$ on the second factor and the leaves of $F$ are the fibers of the projection $pr_1$ on the first factor.
\end{theo}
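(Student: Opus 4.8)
The plan is to transplant Geroch's original argument to the higher-signature setting, with the foliation $F$ playing the role of the congruence of inextendible timelike curves and $T$ playing the role of the scalar time function. The first step is to show that $T$ sweeps each leaf of $F$ diffeomorphically onto $N$. A leaf $L$ of $F$ is a maximal connected causal integral submanifold of dimension $q$, so its inclusion into $M$ is (the image of) an inextendible causal immersion in the sense of §3.2, and global hyperbolicity gives at once that $T|_L : L \to N$ is a homeomorphism. To upgrade this to a diffeomorphism it suffices to check that $T|_L$ is an immersion, i.e. that $dT_x$ is injective on $T_xL$ for every $x \in L$: a vector in $\ker dT_x \cap T_xL$ lies simultaneously in the tangent space of the fiber $T^{-1}(T(x))$ and in a non-positive $q$-plane, so, using that the fibers of a Cauchy time function are spacelike, it has $g$-norm both $\leqslant 0$ and $>0$, hence vanishes. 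Thus $dT_x$ is injective on the $q$-dimensional space $T_xL$, so $T|_L$ is a local diffeomorphism, and being a homeomorphism it is a diffeomorphism. As a byproduct $dT_x$ has rank $q$ everywhere, so $T$ is a submersion with $p$-dimensional fibers, one has $T_xM = T_xL_x \oplus \ker dT_x$ at every point (where $L_x$ denotes the leaf through $x$), the foliation $F$ is transverse to the fibers of $T$, and every leaf meets every fiber of $T$ in exactly one point.

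Next I would turn the level set $P = T^{-1}(n_0)$ into a global transversal. By the previous step it is an embedded $p$-dimensional submanifold transverse to $F$ which meets each leaf in exactly one point, so the point $\sigma(x) := L_x \cap P$ is well defined and is constant on leaves. I then set $\Psi : M \to P \times N$, $\Psi(x) = (\sigma(x), T(x))$, with candidate inverse $\Phi : P \times N \to M$, $\Phi(p,n) = (T|_{L_p})^{-1}(n)$, which is legitimate since $T|_{L_p}$ is a diffeomorphism onto $N$. A direct check shows that $\Phi$ and $\Psi$ are mutually inverse bijections, that $pr_2 \circ \Psi = T$, and that $\Psi$ carries each leaf $L_p$ onto $\{p\}\times N = pr_1^{-1}(p)$; this is precisely the identification asserted in the theorem, up to smoothness.

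It then remains to prove that $\Psi$ is a diffeomorphism. Smoothness of $\Psi$ reduces to smoothness of $\sigma$, which is the standard "slide along the leaves onto $P$'' map: in a foliated chart for $F$ in which $P$ appears as the distinguished transversal, $\sigma$ is just the transverse projection, and globally $\sigma$ is a finite composition of the transverse holonomy diffeomorphisms of $F$ along a path inside a leaf from $x$ to $\sigma(x)$, hence smooth. To see that $d\Psi_x$ is an isomorphism I would use the splitting $T_xM = T_xL_x \oplus \ker dT_x$ from Step 1: on $T_xL_x$ one has $d\sigma_x = 0$ while $dT_x$ is an isomorphism onto $T_{T(x)}N$, and on $\ker dT_x$ one has $dT_x = 0$ while $d\sigma_x$ is an isomorphism onto $T_{\sigma(x)}P$ (its kernel being exactly $T_xL_x$). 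Hence $\Psi$ is a local diffeomorphism, and being bijective it is a diffeomorphism carrying $T$ to $pr_2$ and the leaves of $F$ to the fibers of $pr_1$.

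I expect the main obstacle to be the first step: promoting the homeomorphism $T|_L$ supplied by global hyperbolicity to a genuine diffeomorphism, equivalently ruling out critical points of $T$ along the causal leaves and establishing the transversality of $F$ to the fibers of $T$. This is exactly the point where the signature $(p,q)$ and the causal characters of the objects enter in an essential way, and it also requires checking carefully that a leaf of $F$ genuinely defines an inextendible causal immersion in the precise sense of §3.2, so that the definition of global hyperbolicity is applicable. Once this transversality is secured, the rest is the familiar mechanism by which a foliated bundle admitting a global section is trivial.
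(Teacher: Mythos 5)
Your proposal is correct and follows essentially the same route as the paper: the paper's (one-line) proof likewise observes that each leaf, being an inextendible causal immersion, meets the Cauchy surface $P$ in a unique point $\varphi(x)$ and declares $x \mapsto (\varphi(x), T(x))$ to be the desired isomorphism onto $P \times N$. Your additional work on transversality and smoothness fills in details the paper omits; the one assertion you should still justify is that the level sets of a Cauchy time function are spacelike, which the paper never proves --- alternatively, the injectivity of $d(T \circ f)$ for inextendible causal immersions $f$, which the paper invokes when showing that a smooth time function is a submersion, closes that step without it.
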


\begin{theo}\label{theo:topostructure2}
Let $M$ be a spacetime with a smooth Cauchy time function $T : M \rightarrow N$ having at least one compact level set. Let $\gamma : I \rightarrow N$ be a smooth path. Then the Lorentzian space $\gamma^* M = \bigsqcup_{t \in I} T^{-1}(\gamma(t))$ is globally hyperbolic with the projection on $I$ as a Cauchy time function. In particular, $T$ is a fiber bundle on $N$.
\end{theo}

By studying examples, we will observe that the mere existence of a Cauchy surface is too weak an hypothesis in the non-Lorentzian case as it does not give any information on the global structure of $M$. While the existence of a Cauchy time function on $M$ does give very strong informations on the spacetime, it is in practrice quite hard to show that a given spacetime admits such a Cauchy time function. The generalization of global hyperbolicity which we will choose is based on the third definition in Lorentzian geometry, that of the compactness of causal diamond. Let $S$ be a $(q-1)$-sphere in $M$ which is included in an inextendible causal map. Let $\C(S)$ be the set of maps $f : \overline{\D}^q$ which are embeddings with image $S$ on $\partial \D^q$ and causal on $\D^q$ up to reparametrization of $\D^q$.

\begin{idefn}
We'll say that $M$ is \emph{globally hyperbolic} it satisfies those two conditions :

\begin{enumerate}
    \item There exists a $q$-dimensional manifold $N$ such that for any inextendible causal map $f : N_0 \rightarrow M$, $N_0$ is homeomorphic to $N$.
    \item For any $(q-1)$-sphere $S$ in $M$ which is included in an inextendible causal map, the set $\C(S)$ is compact for the uniform topology.
\end{enumerate}
\end{idefn}

The tree of implications between the three notions is in Figure \ref{intro:implications}.

\begin{figure}[h]
    \begin{center}
      \includegraphics[width=.7\linewidth]{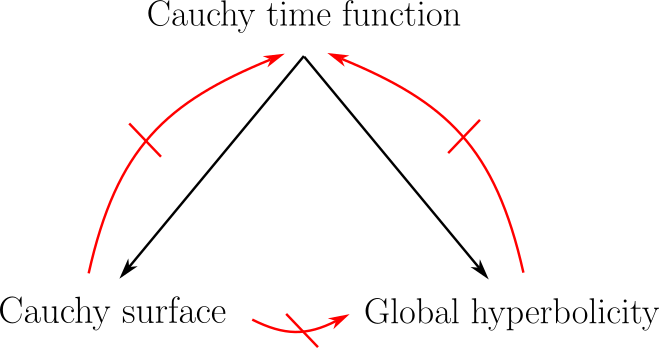}\label{intro:implications}
      \end{center}
      \caption[]{The tree of implications}
    \end{figure}

We show that globally hyperbolic spacetimes verify some interesting properties inspired by the Lorentzian case; In a Lorentzian globally hyperbolic spacetime, any two causally related points admit a causal geodesic between them. While this is no longer true in higher signature, we prove a similar result :

\begin{theo}\label{theo:plateauproblem}
Let $M$ be a globally hyperbolic spacetime and $S$ a $(q-1)$-sphere in $M$ which is included in an inextendible causal map. Then there exists an element in $\C(S)$ of maximal timelike volume.
\end{theo}

In the last section of the article, we give an application of the notions we introduced to the study of an interesting class of representations in $\so_0(p,q+1)$. A discrete and faithful representation $\rho : \Gamma \rightarrow \so_0(p,q)$ is said to be \emph{GH-regular} if it stabilizes a maximal spacelike submanifold of dimension $p$ in $\h^{p,q}$. When the action is cocompact, those representations are called either \emph{spacelike cocompact} (Beyrer-Kassel, \cite{beyrer2023mathbbhpqconvex}), \emph{GHC-regular} (Barbot, \cite{barbot2013deformations}) or $\h^{p,q}$-convex cocompact when $\Gamma$ is hyperbolic (Danciger-Guéritaud-Kassel, \cite{danciger2017convex}, \cite{DGK17}). Those representations are of particular interest as it was proven by Barbot for $q=1$ and Beyrer-Kassel for $q \geq 2$ that they formed a union of connected components in the character variety of $\Gamma$ in $\so_0(p, q+1)$. \\

Given a GH-regular representation $\rho : \Gamma \rightarrow \so_0(p, q+1)$, one may associate a $(\so_0(p, q+1), \h^{p,q})$-structure $\Omega(\Lambda)/\rho$. For $q=1$, it was proven by Mess that those structures characterised GH-regular representations.

\begin{itheo}[Mess, \cite{mess2007lorentz}]
    Let $\rho : \Gamma \rightarrow \so_0(p, 2)$ be a GH-regular representation. Then the space $\Omega(\Lambda)/\rho$ is a maximal globally hyperbolic $(\so_0(p, 2), \h^{p,1})$-structure with complete Cauchy surfaces. Inversely, any maximal globally hyperbolic $(\so_0(p, 2), \h^{p,1})$-structure $M$ with a complete Cauchy surface has holonomy $\rho : \pi_1(M) \rightarrow \so_0(p,2)$ a GH-regular representation and is isometric to $\Omega(\Lambda)/\rho$.
\end{itheo}

We generalize this result to the case $q \geq 2$.

\begin{theo}\label{theo:messgeneral}
    Let $\rho : \Gamma \rightarrow \so_0(p, q+1)$ be a GH-regular representation. Then the space $\Omega(\Lambda)/\rho$ is a maximal globally hyperbolic time-convex $(\so_0(p, q+1), \h^{p,q})$-structure with complete Cauchy surfaces. Inversely, any maximal globally hyperbolic time-convex $(\so_0(p, q+1), \h^{p,q})$-structure $M$ with a complete Cauchy surface has holonomy $\rho : \pi_1(M) \rightarrow \so_0(p,q+1)$ a GH-regular representation and is isometric to $\Omega(\Lambda)/\rho$.
\end{theo}

The case where the Cauchy surfaces are compact corresponds to the case where $\rho$ is spacelike cocompact. 

In the first section of this paper, we will recall some known properties of globally hyperbolic spacetimes and of Lorentzian geometry in general. In section 2, we will generalize the notion of causality to the higher signature setting and we will study the local structure of causal maps. In section 3, we will define the notions of Cauchy surfaces, Cauchy time functions and global hyperbolicity and prove some interesting results on them. We will in particular prove Theorems \ref{theo:topostructure1}, \ref{theo:topostructure2} and \ref{theo:plateauproblem}. In section 4, we will study GH-regular representations in $\so_0(p, q+1)$ and prove Theorem \ref{theo:messgeneral}.

\section{Causality in Lorentzian manifolds}

\subsection{Lorentzian manifolds and time orientation}

Let $M$ be a smooth manifold of dimension $p+1$ and $(g_x)_{x \in M}$ a smooth familly of quadratic forms of signature $(p,1)$ on $T_x M, x \in M$. 
The couple $(M,g)$ is said to be a \emph{Lorentzian manifold}. 
Given a point $x \in M$, a tangent vector $v \in T_x M$ can be differentiated by the sign of its norm.

\begin{defn}
    A vector $v \in T_x M$ is said to be 

    \begin{itemize}
        \item \emph{spacelike} when $g(v) > 0$, 
        \item \emph{lightlike} when $g(v) = 0$, 
        \item \emph{timelike} when $g(v) < 0$.
    \end{itemize}

    When $g(v)$ is lightlike or timelike, $v$ is said to be \emph{causal}.
\end{defn}

We can extend those infinitesimal definitions to a smooth path of $M$.

\begin{defn}
    Let $c : I \subset \R \rightarrow M$ be a smooth path. It is said to be

    \begin{itemize}
        \item \emph{spacelike} if for all $t \in I$, $g(\dot{c}(t)) >0$,
        \item \emph{lightlike} if for all $t \in I$, $g(\dot{c}(t)) =0$,
        \item \emph{timelike} if for all $t \in I$, $g(\dot{c}(t)) <0$.
    \end{itemize}

    When for all $t \in I$, $g(\dot{c}(t))$ is causal, $c$ is said to be \emph{causal}.
\end{defn}

Let $x$ be a point of $M$ and let $\mathbb{P}^{<0}(T_x M)$ be the subset of $\mathbb{P}(T_x M)$ made of the timelike lines of $T_x M$. This space can be identified with the hyperbolic space ; in particular, it is contractible. The set of oriented timelike rays $\s^{<0}(T_x M)$ is a double covering of $\mathbb{P}^{<0}(T_x M)$, therefore it must be the disjoint union of two connected components, both homeomorphic to $\mathbb{P}^{<0}(T_x M)$.

\begin{figure}[h]
    \begin{center}
      \includegraphics[width=.7\linewidth]{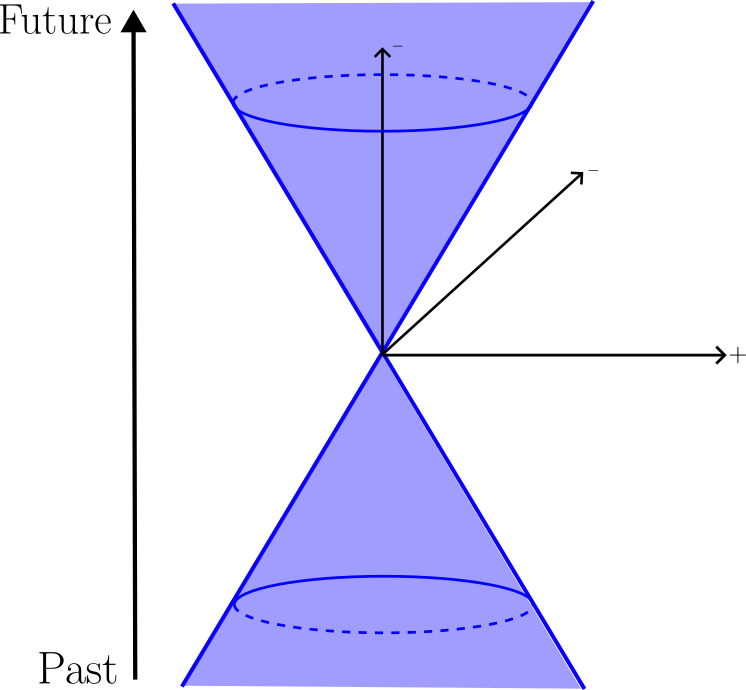}
      \end{center}
      \caption[]{The causal vectors in the lorentzian space $\R^{2,1}$}
    \end{figure}

\begin{defn}
    The choice of one of those components gives a \emph{local time orientation} of $M$ in $x$. The chosen component will be denoted by $\s^{<0}(T_x M)^+$ whereas the opposite component will be denote by $\s^{<0}(T_x M)^-$. A tangent timelike ray in $x$ will be said to be \emph{future-oriented} if it is contained in $\s^{<0}(T_x M)^+$. Otherwise, it will be \emph{past-oriented}. A tangent timelike vector $v \in T_x M$ will be future (resp. past) oriented if $[v] \in \s^{<0}(T_x M)^+$ (resp. $[v] \in \s^{<0}(T_x M)^-$).
\end{defn}

 \begin{rem}
    This notion can be extended to causal tangent rays ; for such a ray $[v] \in \s(T_x M)$, we'll say that $[v]$ is future (resp. past) oriented if it is in the closure of $\s^{<0}(T_x M)^+$ (resp. $\s^{<0}(T_x M)^-$) in $\s(T_x M)$. This also applies to non-zero tangent causal vectors.
 \end{rem}

This time orientation can be extended to the whole manifold $M$, provided that the local 
 orientation in each point stays coherent on the whole manifold.

\begin{defn}
    When the fiber bundle $\s^{<0}(TM)$ over $M$ is trivial, $M$ is said to be $\emph{time orientable}$. The choice of one of its two connected components gives a \emph{time orientation} of $M$. 
    We'll denote by $\s^{<0}(TM)^+$ (resp. $\s^{<0}(TM)^-$) the chosen component, called the bundle of \emph{future-oriented} (resp. \emph{past-oriented}) tangent rays. 
\end{defn}

\begin{defn}
    A causal path $c$ in a time oriented manifold $M$ is said to be future (resp. past) oriented if for all $t$, $\dot{c}(t)$ is future (resp. past) oriented.
\end{defn}

A time orientation on a lorentzian manifold is sometimes defined by a nowhere vanishing 
causal vector field $X$ on $M$. Those two definitions are in fact equivalent.

\begin{prop}\label{timelikevectorfield}
    Let $M$ be a time oriented lorentzian manifold. There exists a causal future-oriented vector field $X$ on $M$.
\end{prop}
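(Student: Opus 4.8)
The plan is to produce $X$ by first manufacturing a smooth timelike \emph{line} field on $M$, and then using the time orientation to select, smoothly, a future‑oriented unit vector inside each line.

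First I would fix an auxiliary Riemannian metric $h$ on $M$, which exists by paracompactness. Comparing $g$ with $h$ yields a smooth field of $h$‑self‑adjoint endomorphisms $A$ of $TM$, defined by $g(u,v) = h(Au,v)$; it is invertible because $g$ is non‑degenerate. Diagonalizing $A_x$ in an $h$‑orthonormal basis exhibits $g$ in that basis as $\mathrm{diag}(\mu_1,\dots,\mu_{p+1})$, where the $\mu_i$ are the eigenvalues of $A_x$, so since $g$ has signature $(p,1)$ exactly one eigenvalue $\lambda(x)$ is negative, and it is a simple eigenvalue. The line $L_x := \ker\big(A_x - \lambda(x)\,\mathrm{Id}\big)$ then consists of timelike vectors. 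Because $\lambda(x)$ is a simple, hence isolated, eigenvalue, both $x\mapsto\lambda(x)$ and the associated spectral (Riesz) projection $\Pi_x = \frac{1}{2\pi i}\oint (z - A_x)^{-1}\,dz$, the loop in $\mathbb{C}$ surrounding $\lambda(x)$ and no other eigenvalue, depend smoothly on $x$ in a neighbourhood of each point; hence $L := \mathrm{im}\,\Pi$ is a smooth timelike line sub‑bundle of $TM$.

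Next I would bring in the time orientation. In each fibre, $L_x\setminus\{0\}$ is a union of two opposite timelike rays; one of them lies in $\s^{<0}(T_xM)^+$ and the other in $\s^{<0}(T_xM)^-$, because the antipodal map swaps the two components of $\s^{<0}(T_xM)$ (the double cover $\s^{<0}(T_xM)\to\mathbb{P}^{<0}(T_xM)$ is trivial, $\mathbb{P}^{<0}(T_xM)$ being contractible, and the deck involution interchanges its two sheets). I then define $X(x)$ to be the unique $h$‑unit vector spanning $L_x$ whose ray belongs to $\s^{<0}(T_xM)^+$. By construction $X$ is future‑oriented timelike, in particular causal, and it only remains to check smoothness. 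This is a local matter: on a neighbourhood over which $L$ is trivialized by a smooth nowhere‑vanishing section $s$, the map $x\mapsto [s(x)]\in\s^{<0}(TM)$ is continuous and $\s^{<0}(TM)^+$ is open and closed in $\s^{<0}(TM)$, so on each connected component of the neighbourhood one has $X = \varepsilon\,s/\lvert s\rvert_h$ for a fixed sign $\varepsilon\in\{\pm 1\}$, which is smooth.

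The point demanding the most care is the smooth dependence of the spectral data of $A$ on the base point — that the lowest eigenvalue remains simple (immediate from the signature) so that the associated projection is smooth — and, relatedly, the passage from the line field $L$ to a genuine vector field, which is precisely where time‑orientability (as opposed to the mere existence of a timelike line field) is used. Everything else is routine.
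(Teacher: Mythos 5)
Your argument is correct, and it takes a genuinely different route from the paper. The paper's proof is a one-liner: the fibers of $\s^{<0}(TM)^+$ are contractible, so the bundle admits a smooth section by the general section-existence theorem for bundles with contractible fibers (the same device the paper reuses later for $Gr_q^{<0}(TM)^+$ in higher signature). You instead build the section by hand: an auxiliary Riemannian metric $h$ turns $g$ into an $h$-self-adjoint field $A$, the signature $(p,1)$ forces a single simple negative eigenvalue, and the Riesz projection gives a smooth timelike line sub-bundle $L\subset TM$; time orientability is then used only to choose, continuously and hence locally by a fixed sign, the future ray in each line (your observation that the antipodal map must swap the two components of $\s^{<0}(T_xM)$, since a nontrivial deck transformation of a disconnected double cover of a contractible base cannot preserve a sheet, is the right justification). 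What your approach buys is elementariness and an extra piece of information: it shows that a timelike line field exists on \emph{any} Lorentzian manifold, with time orientation entering only to upgrade the line field to a vector field; it also yields a timelike (not merely causal) field, slightly stronger than the statement. What the paper's approach buys is brevity and uniformity: the same contractible-fiber argument covers both the Lorentzian case and the rank-$q$ distribution statement in signature $(p,q)$ without any spectral bookkeeping — though your construction would also generalize there by taking the image of the spectral projection onto the negative part of the spectrum of $A$.
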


\begin{proof}
    Each fiber of the bundle $\s^{<0}(TM)^+$ is contractible, meaning there must exist a smooth section $X : M \rightarrow \s^{<0}(TM)^+$ (see \cite{topologyoffiberbundles}), hence the result.
\end{proof}

\begin{rem}
    Let $M$ be a smooth manifold admitting a smooth nowhere-vanishing vector field $X$. Let $\overline{g}$ be a Riemannian metric on $M$ and for each $x \in M$ and $v \in T_x M$, let

    \[g(v) = \overline{g}(\pi_{X(x)^{\perp}}(v)) - \overline{g}(\pi_{X(x)}(v)),\]

    where $\pi_E$ is the orthogonal projection with image $E$. This defines a Lorentzian metric $g$ on $M$ for which $X$ is timelike, thus giving us the following result : The smooth manifolds admitting a Lorentzian metric are exactly the manifolds admitting a smooth non-vanishing vector field.
\end{rem}

\begin{coro}
    Since each smooth vector field is integrable, every time oriented Lorentzian manifold admits a smooth future-oriented one dimensional timelike foliation.
\end{coro}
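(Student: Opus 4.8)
The plan is to combine Proposition~\ref{timelikevectorfield} with the classical fact that every smooth line field integrates to a one-dimensional foliation. First I would invoke Proposition~\ref{timelikevectorfield} to obtain a smooth causal future-oriented vector field $X$ on the time oriented Lorentzian manifold $M$; in fact, by pushing the section of $\s^{<0}(TM)^+$ slightly into the interior of the timelike cone (or by rescaling), I would arrange $X$ to be genuinely timelike rather than merely causal, so that $g(X) < 0$ everywhere. The vector field $X$ is nowhere vanishing by construction.

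Next I would appeal to the standard result that a nowhere-vanishing smooth vector field $X$ on a manifold $M$ generates a local flow, and that the orbits of this flow form a one-dimensional foliation of $M$ — this is the $1$-dimensional case of the Frobenius theorem, where integrability is automatic since any rank-one distribution is involutive. Concretely, through each point $x \in M$ the maximal integral curve of $X$ is an immersed one-dimensional submanifold, and the flow box coordinates straighten $X$ locally, providing the required foliation charts. Call this foliation $\mathcal{F}$.

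Finally I would check that $\mathcal{F}$ has the desired causal character: each leaf is, locally, an integral curve $t \mapsto c(t)$ with $\dot c(t) = X(c(t))$, hence $g(\dot c(t)) = g(X(c(t))) < 0$, so every leaf is a timelike curve; and since $X$ was chosen future-oriented, each leaf is a future-oriented timelike path. This yields the claimed smooth future-oriented one-dimensional timelike foliation.

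The only genuine subtlety — and it is minor — is the passage from a \emph{causal} vector field (as produced verbatim by Proposition~\ref{timelikevectorfield}) to a \emph{timelike} one, needed so that the leaves are timelike in the strict sense; this is handled by a small generic perturbation within the open timelike cone, or simply by noting that the section of $\s^{<0}(TM)^+$ in the proof of Proposition~\ref{timelikevectorfield} can be taken valued in the open cone of timelike rays rather than its closure. Everything else is a direct citation of the integrability of line fields.
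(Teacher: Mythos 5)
Your proposal is correct and follows essentially the same route as the paper: take the vector field supplied by Proposition~\ref{timelikevectorfield} and integrate it, the rank-one distribution being automatically involutive. The ``subtlety'' you flag is in fact vacuous here, since the section constructed in the proof of Proposition~\ref{timelikevectorfield} takes values in $\s^{<0}(TM)^+$, which by definition consists of strictly timelike future-oriented rays, so the resulting field is already timelike and no perturbation into the open cone is needed.
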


Mathematicians and physicists alike often assume that $M$ is both orientable and time-orientable, 
as it gives both a reasonable model for our spacetime when $p=3$ and a convenient context in which to do Lorentzian geometry.

\begin{defn}
    A Lorentzian manifold $M$ which is both orientable and time orientable is called a \emph{spacetime}.
\end{defn}

Unless specified otherwise, every Lorentzian manifold we will consider in this paper will be a Lorentzian spacetime.

\begin{comment}

It is possible to extend the notion of future-oriented path to piece-wise smooth paths, as long as every smooth piece keeps the same orientation.

\begin{defn}
    A piece-wise smooth path $c : I \rightarrow M$ is said to be future-oriented if for each $t \in I$ such that $c$ is differentiable in $t$, $\dot{c}(t)$ is future-oriented.
\end{defn}

\begin{prop}
    Let $c : I \rightarrow M$ be a piece-wise smooth future-oriented path and let $J$ be an closed subset of $I$ such that each point where $c$ is not smooth is contained in $I \setminus J$. There exists a smooth future-oriented path $\tilde{c} : I \rightarrow M$ such that for each $t \in J$, $\tilde{c}(t) = c(t)$.
\end{prop}

This result tells us that every piece-wise smooth future-oriented causal path can be approximated by a smooth future-oriented causal path.

\end{comment}

\subsection{Causality in a Lorentzian spacetime}

Now that we have defined what it means for a path to be causal, for $x \in M$, we can consider the set of points which can be causally related to $x$.

\begin{defn}
    The set $I^+(x)$ (resp. $J^+(x)$) is the set of points $y \in M$ such that there exists a timelike (resp causal) future-oriented path $c$ coming from $x$ to $y$. 
\end{defn}

It is possible to show that the notion of causality is locally linked to the nature of the geodesics coming from a point $x$ of $M$.

\begin{defn}
    Let $x \in M$. A normal neighborhood of $x$ in $M$ is an open subset $U$ of $T_x M$ containing $0$ on which $exp : T_x M \rightarrow M$ is a diffeomorphism. The sets $U$ and $exp(U)$ will sometimes be identified.
\end{defn}

\begin{prop}\label{Lorentzianlocalstructure}
    Let $U \subset T_x M$ be a normal subset of $M$ centered on $x$ and let $c : I \rightarrow U$ be a timelike future-oriented path such that $c(0) = 0$. Then for all $t >0$ (resp. $t < 0$), we have $g_x(c(t)) < 0$ and $c(t)$ is in the future (resp. past) time cone $g_x < 0$ of $T_x M$.
\end{prop}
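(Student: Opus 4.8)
The plan is to transport everything to the tangent space via the exponential chart and then run a connectedness ("first exit time") argument, with the Gauss lemma supplying the decisive differential inequality. First I would identify $U$ with $\exp_x(U)\subset M$ and set $\bar g=\exp_x^{*}g$, a Lorentzian metric on $U\subset T_xM$ with $\bar g_0=g_x$ (since $d(\exp_x)_0=\mathrm{id}$); the path $c$ is $\bar g$-timelike and $\bar g$-future-oriented with $c(0)=0$. Recall the Gauss lemma in the pseudo-Riemannian setting (see e.g. O'Neill, \emph{Semi-Riemannian Geometry}): for $v\in U$ and $w\in T_vU\cong T_xM$ one has $\bar g_v(v,w)=g_x(v,w)$, where on the left $v$ is read as the radial (position) vector at the point $v$. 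In particular $\bar g_v(v,v)=g_x(v,v)$, so the radial geodesic $s\mapsto\exp_x(sv)$ has constant $\bar g$-norm $g_x(v,v)$; if $v$ lies in the open future cone $\mathcal C^{+}\subset T_xM$ (the $g_x$-timelike vectors $v$ with $[v]\in\s^{<0}(T_xM)^{+}$), this geodesic is $\bar g$-timelike and, being $\bar g$-future-oriented at $s=0$, stays $\bar g$-future-oriented throughout by continuity of the time orientation along a causal curve. Hence the radial vector at each $v\in\mathcal C^{+}$ is $\bar g$-future-timelike.

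Next I set $h(t)=g_x(c(t),c(t))=\bar g_{c(t)}(c(t),c(t))$, so that $h(0)=0$ and, by the Gauss lemma, $h'(t)=2\,g_x(c(t),\dot c(t))=2\,\bar g_{c(t)}(c(t),\dot c(t))$. The key observation is that on any subinterval of $I$ where $c(t)\in\mathcal C^{+}$, both the radial vector $c(t)$ and the velocity $\dot c(t)$ are $\bar g$-future-timelike at the point $c(t)$, so their $\bar g$-inner product is strictly negative (two future-timelike vectors in a signature $(p,1)$ space pair negatively); thus $h'<0$ there and $h$ is strictly decreasing.

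I then treat $t>0$, the case $t<0$ being symmetric with the past cone $\mathcal C^{-}$. Since $c$ is smooth, $c(t)=t\,\dot c(0)+o(t)$ with $\dot c(0)\in\mathcal C^{+}$; as $\mathcal C^{+}$ is open and invariant under positive scaling, $c(t)\in\mathcal C^{+}$ for all small $t>0$, and in particular $g_x(c(t))<0$ there. Let $T_{*}=\sup\{\tau\in I: \tau>0,\ c(s)\in\mathcal C^{+}\text{ for all }s\in(0,\tau]\}$, which we have just seen is positive. If $T_{*}$ were not the right endpoint of $I$, then $c((0,T_{*}))\subset\mathcal C^{+}$ forces $c(T_{*})\in\overline{\mathcal C^{+}}$ by continuity; it cannot lie in $\mathcal C^{+}$ itself (openness would then let us enlarge $T_{*}$), so $c(T_{*})\in\partial\mathcal C^{+}$ and hence $h(T_{*})=g_x(c(T_{*}))\geqslant 0$. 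But by the previous paragraph $h$ is negative for small $t>0$ and strictly decreasing on $(0,T_{*})$, so $h(T_{*})<0$ by continuity, a contradiction. Therefore $c(t)\in\mathcal C^{+}$ for every $t>0$ in $I$ (the monotonicity of $h$ also handling the right endpoint, should it belong to $I$), which is precisely the claim that $g_x(c(t))<0$ and $c(t)$ lies in the future time cone of $T_xM$; the argument for $t<0$ is identical with $\mathcal C^{+}$ replaced by $\mathcal C^{-}$.

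The main obstacle is the step establishing that the radial vector field is $\bar g$-future-timelike over $\mathcal C^{+}$: this genuinely uses the Gauss lemma together with the impossibility of a causal geodesic reversing its time orientation, and it is also what makes the exit argument work, since leaving $\mathcal C^{+}$ can only happen through $\partial\mathcal C^{+}$, where $h\geqslant 0$, in conflict with the inequality $h'<0$. Everything else is routine, and I would simply cite a standard reference for the Gauss lemma rather than reprove it here.
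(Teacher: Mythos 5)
Your proof is correct, and there is nothing in the paper to compare it against: the paper states this proposition without proof, treating it as a standard fact of Lorentzian geometry (it is essentially Proposition 4.5.1 of Hawking--Ellis, which the paper cites for the lightlike refinement in the remark that follows). What you have written is the classical textbook argument, and the two ingredients you isolate are exactly the right ones: the semi-Riemannian Gauss lemma $\bar g_v(v,w)=g_x(v,w)$, which turns the derivative of $h(t)=g_x(c(t))$ into the $\bar g$-inner product of the radial vector with $\dot c(t)$ at the point $c(t)$, and the observation that the radial field is future-timelike throughout the open future cone because radial geodesics have constant speed and a timelike curve cannot reverse its time orientation. The inequality $h'<0$ on any interval where $c$ stays in the cone, combined with the first-exit argument (the curve could only leave through the boundary of the cone, where $g_x=0$, contradicting $h<0$ there), closes the proof with no gaps; the small-$t$ step $c(t)=t\dot c(0)+o(t)$ correctly gets the curve into the cone in the first place.
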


\begin{rem}
    The same result can be stated for causal paths ; the points $y \in U$ such that $g_x(y) = 0$ are exactly those linked to $0$ by a lightlike geodesic.
\end{rem}

This describes the local structure of a smooth future-oriented causal path and allows us to weaken the regularity condition on $c$. We are thus able to consider paths which are only continuous but locally verify a property analogous to \ref{Lorentzianlocalstructure}.

\begin{defn}
    A continous path $c : I \rightarrow M$ is said to be timelike (resp. causal) if for each $t_0 \in I$, there exists a normal neighborhood $U$ of $c(t_0)$ such that for each $t < t_0$ (resp. $t > t_0$) with $c(t) \in U$, the geodesic $[c(t_0), c(t)]$ is causal past-oriented (resp. future-oriented).
\end{defn}

It is possible to show that the set of smooth causal paths is dense in the set of continous causal paths for the uniform topology.

\begin{prop}
    Let $c : I \rightarrow M$ be a continuous causal path and $d$ a Riemannian distance on $M$. For each positive continuous map $\delta : I \rightarrow \R$, there exists a smooth causal path $\overline{c} : I \rightarrow M$ such that for all $t \in I$, $d(c(t), \overline{c}(t)) \leqslant \delta(t)$.
\end{prop}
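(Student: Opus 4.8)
The plan is to approximate $c$ first by a broken causal geodesic, then to perturb it into the \emph{open} timelike cone, and finally to mollify; the convexity of the Lorentzian causal cone is what makes each step succeed.

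First I would build a broken geodesic. Fix a compact subinterval (the general case follows by a locally finite construction, since everything below is local in $t$). By continuity of $c$ and positivity of $\delta$, choose a subdivision $t_0 < t_1 < \cdots < t_n$ fine enough that for each $i$ the image $c([t_{i-1},t_{i+1}])$ lies inside a geodesically convex normal neighborhood $W_i$ of $c(t_i)$, itself contained in the $d$-ball of radius $\tfrac12 \min_{[t_{i-1},t_{i+1}]}\delta$ about $c(t_i)$. Since $c$ is a continuous causal path, for a subdivision this fine the unique geodesic $\gamma_i$ joining $c(t_i)$ to $c(t_{i+1})$ is future-oriented causal (this is precisely the local content of the definition of a continuous causal path, see Proposition \ref{Lorentzianlocalstructure}) and stays in $W_i$. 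Concatenating the $\gamma_i$, reparametrised on $[t_i,t_{i+1}]$, yields a continuous, piecewise smooth, future-oriented causal path $c_1$ with $d(c_1(t),c(t)) \leqslant \delta(t)$ for all $t$, because $c_1(t)$ and $c(t)$ both lie in $W_i$.

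Next I would tilt and smooth. Since the statement asks only for $C^0$-closeness and for a causal (not necessarily lightlike) approximant, it suffices to produce a smooth future-oriented \emph{timelike} path close to $c$. Let $X$ be a nowhere-vanishing future-oriented timelike vector field, which exists by (the proof of) Proposition \ref{timelikevectorfield}. In signature $(p,1)$ the future causal cone is a convex cone whose interior is the future timelike cone, and an open convex cone absorbs its closure; hence adding a small positive multiple of $X$ to the future-causal velocity of $c_1$ produces a velocity strictly inside the timelike cone. Carrying this out in charts gives a piecewise smooth future-timelike path $c_2$, still $C^0$-close to $c$, whose velocity lies in the timelike interior with a uniform margin on compact sets. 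Finally, mollify $c_2$ against a nonnegative kernel of small support $\varepsilon$ in each coordinate chart: because being future-timelike is an open condition and $\dot c_2$ is uniformly timelike, for $\varepsilon$ small the resulting smooth curve $\overline c$ is future-timelike, hence a smooth future-oriented causal path, and $d(\overline c(t),c(t)) \leqslant \delta(t)$ after shrinking $\varepsilon$.

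The main obstacle is the interplay of the last two steps. The mollified velocity is a convex average of velocities based at nearby points, whose causal cones differ by $O(\varepsilon)$ and which the chart distorts; this is exactly why I first tilt into the open timelike cone with a definite margin, so that these $O(\varepsilon)$ errors cannot escape the causal cone, whereas a purely causal, lightlike curve could be pushed spacelike by such an average. Both the tilt and the averaging rest on the convexity of the Lorentzian causal cone and its splitting into two opposite convex cones, the very feature whose failure for signatures $(p,q)$ with $q \geqslant 2$ motivates the rest of the paper. The remaining bookkeeping, namely a locally finite subdivision to handle non-compact or half-open $I$ and to glue the local smoothings, is routine.
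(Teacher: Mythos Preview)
Your argument is essentially correct and follows the classical Lorentzian route: broken causal geodesics, a tilt into the open timelike cone via a global timelike vector field, and mollification. The only step I would ask you to make more precise is the tilt, where ``adding a small multiple of $X$ to the velocity of $c_1$'' must be converted into an actual deformation of the curve (for instance by flowing $c_1$ along $X$ for a small time, or by working in a chart in which $X$ is a coordinate direction and adding a small linear drift); this is routine but should be said.

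Note, however, that the paper does \emph{not} prove this proposition: it appears in Section~2 as a background fact from Lorentzian geometry, stated without argument. What the paper \emph{does} prove is the higher-signature analogue (Proposition~\ref{approxcausal} and the preceding timelike version), and there the method is genuinely different from yours. Instead of broken geodesics, the paper works in a cylindrical neighborhood $C(a,b)$ where the causal map is realised as the graph of a $1$-Lipschitz map $\hat f : \D^q(0,b) \to (-a,a)^p$ with respect to a constant comparison metric $g_\varepsilon \geqslant g$, and then smooths $\hat f$ by convolution; the Lipschitz condition is preserved under convolution, and since $g_\varepsilon \geqslant g$ the resulting smooth graph is still causal for $g$. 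Your broken-geodesic step has no obvious analogue for $q \geqslant 2$ (there is no canonical ``segment'' joining two nearby points of a $q$-dimensional causal immersion), which is presumably why the paper adopts the graph-and-Lipschitz viewpoint; conversely, your approach is more geometric in the Lorentzian case and exploits directly the convexity of the causal cone, as you point out.
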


In particular, piecewise-smooth causal paths are continuous causal paths in this sense, 
as long as every smooth piece keeps the same time orientation.

\begin{coro}
    Let $c_1 : (a,b) \rightarrow M$ and $c_2 : (b,c) \rightarrow M$ be two smooth timelike (resp. causal) paths in $M$ 
    such that $c_1(b) = c_2(b)$ and $\dot{c_1}(b)$ and $\dot{c_2}(b)$ are both in the same connected component of $\{g_{c_1(b)} < 0\}$. 
    Then the concatenation $c$ of $c_1$ and $c_2$ is a continuous timelike (resp. causal) path.
\end{coro}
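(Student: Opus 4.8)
The plan is to verify, at each parameter $t_0 \in (a,c)$, that the concatenation $c$ — defined by $c|_{(a,b]} = c_1$ and $c|_{[b,c)} = c_2$ (we take $c_1$ and $c_2$ to be defined at the common endpoint $b$), continuous because $c_1(b) = c_2(b)$ — satisfies the local condition in the definition of a continuous timelike (resp. causal) path. I would begin by normalizing orientations: a smooth timelike path keeps its velocity inside the cone $\{g < 0\}$, which has exactly two connected components, so over a connected domain each of $c_1$ and $c_2$ is future-oriented throughout or past-oriented throughout. The hypothesis that $\dot c_1(b)$ and $\dot c_2(b)$ lie in the same component of $\{g_{c_1(b)} < 0\}$ forces these two orientations to coincide, and I may assume both are future-oriented (the other case being handled by reversing parametrizations).

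For $t_0 \ne b$ the path $c$ agrees with $c_1$ or with $c_2$ on a neighborhood of $t_0$, so the claim reduces to the smooth case: choosing a normal neighborhood $U$ of $c(t_0)$ and restricting the time parameter to a subinterval on which the corresponding arc of $c_i$ stays in $U$, Proposition \ref{Lorentzianlocalstructure} applied to that arc — which passes through $c(t_0)$, identified with the origin of $T_{c(t_0)}M$ in normal coordinates — yields that $[c(t_0), c(t)]$ is timelike, future-oriented for $t > t_0$ and past-oriented for $t < t_0$. For the causal version one invokes instead the causal analogue of Proposition \ref{Lorentzianlocalstructure} recorded in the remark following it.

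The substantive case is $t_0 = b$. Put $x = c(b)$, fix a normal neighborhood $U$ of $x$, and pick $\eta > 0$ with $c_1((b-\eta,b]) \subset U$ and $c_2([b,b+\eta)) \subset U$. Applying Proposition \ref{Lorentzianlocalstructure} to $c_2$ — smooth, future-oriented, passing through $x$ — shows that for $b < t < b+\eta$ the radial geodesic $[x, c(t)] = [x, c_2(t)]$ is timelike and future-oriented; applying the same proposition to $c_1$ and reading off the "$t < 0$" part of its conclusion shows that for $b - \eta < t < b$ the point $c(t) = c_1(t)$ lies in the past time cone of $x$ and $[x, c_1(t)]$ is timelike past-oriented. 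Hence $U$, together with the time-neighborhood $(b-\eta, b+\eta)$, witnesses the local condition at $b$, so $c$ is a continuous timelike path; the causal case is word-for-word the same with "timelike" replaced by "causal".

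The one genuine obstacle is the bookkeeping at the junction $b$: one must produce a single normal neighborhood of $x$ that simultaneously governs the incoming arc $c_1$ and the outgoing arc $c_2$, and one should read the clause "for each $t$ $\ldots$ with $c(t) \in U$" in the definition with the time parameter kept near $t_0$ — otherwise a timelike curve that later re-enters a neighborhood of $x$ would spuriously break the condition, already in the smooth case. Once this is granted, the proof collapses to two applications of Proposition \ref{Lorentzianlocalstructure}, one on each side of $b$, with nothing deeper involved.
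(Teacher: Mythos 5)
Your proof is correct and follows essentially the same route as the paper: the junction at $b$ is handled by two applications of Proposition \ref{Lorentzianlocalstructure}, showing that $c_1(t)$ lies in the past of $c(b)$ for $t$ slightly below $b$ and $c_2(t)$ in its future for $t$ slightly above. Your additional bookkeeping (orientation normalization via connectedness of the domains, the verification at $t_0 \neq b$, and the remark that the local condition should be read with the parameter restricted near $t_0$) only makes explicit what the paper leaves implicit.
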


\begin{proof}
    By using proposition \ref{Lorentzianlocalstructure}, this tells us that for $\varepsilon$ small enough and for each $b- \varepsilon < t < b$, 
    $c_1(t)$ is in the past of $c_1(b)$ and for each $b + \varepsilon > t > b$, $c_2(t)$ is in the future of $c_2(b)$, hence the result.
\end{proof}

In what follows, unless specified otherwise, a future-oriented causal path will only be assumed to be continuous. As Lorentzian geometry was primarily developped as a tool to modelize general relativity, we would like to give conditions on spacetimes to get a physically relevant notion of causality. One of those obvious conditions is that we do not want a point in a spacetime $M$ to be causally related to itself.

\begin{defn}
    A spacetime $M$ is said to be \emph{non-causal} if there exists a point $x$ and a non-trivial causal path $c : [a,b] \rightarrow M$ such that $c(a) = c(b) = x$. Alternativaly, $M$ is non-causal if there exists a future-oriented causal embedding $c : \s^1 \rightarrow M$. When $M$ is non-non-causal, it is said to be \emph{causal}.
\end{defn}

Here are some examples of non-causal spacetimes.

\begin{exmp}
    \begin{itemize}
        \item The Einstein space, $\ein^{n,1} \simeq (\s^n \times \s^1, ds_n^2 - ds_1^2)$ where $ds_n^2$ is the round metric on $\s^n$. The map $t \in \s^1 \mapsto (p, t)$ is a causal loop.
        \item Any space of the form $(M \times \s^1, ds^2 - ds_1^2)$ where $ds^2$ is a Riemannian metric on $M$, for the same reason.
        \item It is possible to show that any compact spacetime is non-causal (see \cite{largescalestructure}).
    \end{itemize}
\end{exmp}

One can reinforce this notion of causality with the existence of a time function.

\begin{defn}
    A continuous map $T : M \rightarrow \R$ is said to be a \emph{time function} if for each future causal curve $c : I \rightarrow M$, $T \circ c : I \rightarrow \R$ is increasing.
\end{defn}

Such a map gives us a way to parametrize all timelike curves by intervals of $\R$. In particular, the existence 
of a time function on $M$ is a stronger causality condition than that of causality.

\begin{coro}
    If $M$ admits a time function, $M$ must be causal.
\end{coro}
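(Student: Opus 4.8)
The plan is to argue by contradiction, invoking nothing beyond the definitions of time function and of (non-)causality. I would begin by assuming that $M$ carries a time function $T$ while failing to be causal. Unpacking the definition of non-causality, this yields a non-trivial future-oriented causal path $c : [a,b] \rightarrow M$ closing up on itself, that is, with $c(a) = c(b)$. Here I would note that a continuous causal path in the sense of the paper is already equipped with a consistent future orientation (the local condition distinguishes $t < t_0$ from $t > t_0$), so there is no ambiguity in applying the time-function condition to $c$.

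Next I would apply the defining property of $T$: along the future causal curve $c$ the real-valued function $T \circ c : [a,b] \rightarrow \R$ is increasing, whence $T(c(a)) < T(c(b))$. Confronting this with the equality $T(c(a)) = T(c(b))$ forced by $c(a) = c(b)$ produces the contradiction, and therefore $M$ must be causal. One may equivalently phrase the same step through the embedding characterization: a future-oriented causal embedding $c : \s^1 \rightarrow M$ would make $T \circ c$ a strictly increasing function on the circle, which is impossible since $\s^1$ carries no total order compatible with such monotonicity.

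The single delicate point — and the one I would flag explicitly — is the sense in which $T \circ c$ is \emph{increasing}. The argument hinges on \emph{strict} monotonicity along non-constant future causal curves; without strictness a constant map $T$ would spuriously qualify as a time function on a non-causal space such as $\ein^{n,1}$. Thus strictness is precisely the content the corollary records, and it is the only hypothesis that genuinely does the work; there is no substantive obstacle beyond fixing this convention.
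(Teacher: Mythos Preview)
Your proposal is correct and follows essentially the same argument as the paper: assume a closed future-oriented causal path $c$ with $c(a)=c(b)$, note that $T\circ c$ is increasing and derive the contradiction $T(c(a))<T(c(b))=T(c(a))$. Your explicit flagging of the strictness convention and the alternative phrasing via an $\s^1$-embedding are helpful elaborations, but the core reasoning is identical to the paper's.
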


\begin{proof}
    Assume there exists a future-oriented causal path $c : (a,b) \rightarrow M$ such that $c(a) = c(b)$. 
    Then we must have $T(c(a)) = T(c(b))$, which is impossible since $T$ is assumed to be increasing on every causal curve.
\end{proof}

\begin{defn}\label{inextendiblepath}
    A path $c : (a,b) \rightarrow M$ is said to be \emph{extendible} if $c$ converges in $M$ near $a$ or $b$. Otherwise, $c$ is \emph{inextendible}.
\end{defn}

As we've seen, it is quite easy to construct smooth future otiented paths in a spacetime $M$, thus it is also possible to extend an extendible future-oriented path into an inextendible future path.

\begin{prop}
    Let $c : (a,b) \rightarrow M$ be a future path which is extendible in $b$. There exists an inextendible future path $\tilde{c} : I \rightarrow M$ such that $(a,b) \subset I$ and for all $t \in (a,b)$, $\tilde{c}(t) = c(t)$.
\end{prop}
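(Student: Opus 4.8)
The plan is to produce $\tilde c$ by adjoining to $c$ its limit endpoint(s) and then prolonging by an integral curve of a future‑oriented causal vector field, and to observe that this construction is automatically inextendible at the newly added ends. First I would set $p := \lim_{t\to b^-}c(t)\in M$, which exists by hypothesis, and extend $c$ to the half‑open interval $(a,b]$ by $c(b):=p$. One has to check that this adjunction preserves causality in the continuous sense; this is the standard fact that the causal condition is closed under passing to limits of paths — the same mechanism behind lemma 6.2.1 of \cite{largescalestructure} recalled in the opening remark — so the extended map is still a future‑oriented continuous causal path on $(a,b]$.

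Next, by Proposition \ref{timelikevectorfield} there is a future‑oriented causal vector field $X$ on $M$. Let $\gamma:[0,\tau)\to M$ be the maximal integral curve of $X$ with $\gamma(0)=p$; since $g(X)\leqslant 0$ everywhere and $X$ is future‑oriented, $\gamma$ is a smooth future‑oriented causal path. I would then concatenate $c|_{(a,b]}$ with $\gamma$ at the common point $p$, obtaining (after the evident reparametrisation) a map $c'$ on $(a,b+\tau)$. That $c'$ is again a future‑oriented continuous causal path is checked at the junction as in the concatenation corollary, the only extra input being that the incoming branch $c$ is merely continuous at $p$, so one argues directly from the local characterisation of continuous causal paths rather than quoting the corollary verbatim. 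The key point is that $\gamma$ cannot converge in $M$ as $t\to\tau$: if it did, the integral curve would extend past $\tau$ by the existence theorem for ordinary differential equations, contradicting maximality. Hence $c'$ is inextendible at its right endpoint.

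Finally, if $c$ is already inextendible at $a$ there is nothing further to do on the left. Otherwise $c$ converges at $a$ to some $p'\in M$; I then prepend to $c'$ the maximal integral curve of $X$ taking the value $p'$ at parameter $0$, restricted to its negative times. Running a future‑oriented $X$ backwards produces a past‑oriented portion, so the concatenation is still a future path, and, as above, this portion cannot converge at the far end of its maximal interval. The resulting path $\tilde c:I\to M$ is then a future path, restricts to $c$ on $(a,b)$, and fails to converge near either end of the interval $I$, i.e.\ it is inextendible.

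The main obstacle is entirely the bookkeeping of causality at the two gluing operations: verifying that adjoining the limit point and then splicing on an integral curve of $X$ does not destroy the continuous–causal property, given that near $a$ and $b$ the path $c$ has no well‑defined tangent vector. This is handled by the local description of continuous causal paths together with a limiting argument — the geodesics $[c(s),c(t)]$ converge to $[p,c(t)]$ as $s\to b^-$, causality is a closed condition, and the orientation persists in the closure of the time cone — rather than by a direct appeal to the smooth concatenation corollary. Everything else (existence of $X$ from Proposition \ref{timelikevectorfield}, maximality of integral curves, reparametrisation of domains) is routine.
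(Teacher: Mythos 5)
Your construction is correct and is essentially a fleshed-out version of the second method the paper only mentions in passing: the paper's main (one-line) argument extends $c$ step by step through normal neighborhoods centered on the current endpoint, and then remarks that for smooth $c$ one may instead extend $\dot c$ to a causal vector field and take its flow line. You take the flow-line route but make it work for merely continuous $c$ by first adjoining the limit point $p$ and then splicing on an integral curve of the globally defined field $X$ from Proposition \ref{timelikevectorfield}, with the junction causality checked via the local characterisation of continuous causal paths and closedness of the causal cones in a convex normal neighborhood; this is more detailed than the paper's sketch and avoids having to iterate the normal-neighborhood extension indefinitely. One small point to tighten: your claim that the maximal integral curve $\gamma$ cannot converge at the end of its maximal interval is justified only by ODE maximality, which settles the case of a finite maximal time but not the case where the maximal interval is unbounded (maximality is then vacuous). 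The conclusion still holds because $X$ is nowhere vanishing (it is a section of the timelike ray bundle), so a flow-box argument shows a flow line cannot accumulate at a point of $M$ even in infinite time; the same remark applies to the backward prolongation at $a$. Also, your phrase about the prepended piece being ``past-oriented'' should be read as: the flow line on negative parameter values, traversed with increasing parameter, is still future-oriented since its tangent is $X$, which is what makes the concatenation a future path.
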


\begin{proof}
    The path $c$ can be extended indefinitely by taking a normal neighborhood centered on its endpoint, hence the result. When $c$ is smooth, it is also possible to extend $\dot{c}(t), t \in \overline{I}$ into a causal vector field on $M$ and to take the flow line containing $c$.
\end{proof}

\subsection{Cauchy surfaces and global hyperbolicity}

We will now give a stronger notion of causality on a spacetime $M$. This notion detects both the fact that $M$ is causal, and the absence of causal paths which go to the boundary of the spacetime $M$ in finite time.

\begin{defn}
    Let $x,y \in M$. We will denote by $\Cc(x,y)$ the set of future-oriented causal paths coming from $x$ to $y$, which we will endowe with the uniform $C^0$ topology, meaning that the open set of $\Cc(x,y)$ will be the future-oriented paths from $x$ to $y$ entirely contained in an open set $U$ of $M$.
\end{defn}

\begin{defn}
    A spacetime $M$ is said to be \emph{globally hyperbolic} if for each $x,y \in M$, $\Cc(x,y)$ is compact.
\end{defn}

This is actually a very strong condition, which as we will see, implies both the causality of $M$ and the existence of a time function.

\begin{defn}
    A subset $S \subset M$ is a \emph{Cauchy surface} of $M$ if for each inextendible future causal curve $c : I \rightarrow M$, there exists a unique $t \in I$ such that $c(t) \in S$.
\end{defn}

\begin{figure}[hbt]
    \begin{center}
      \includegraphics[width=.7\linewidth]{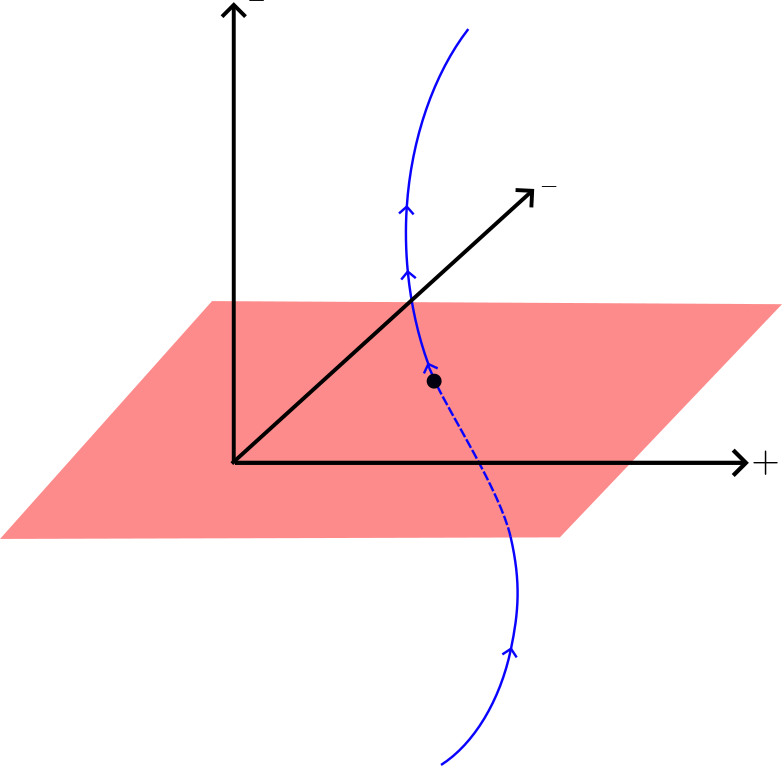}
      \end{center}
      \caption[]{A Cauchy surface of $\R^{2,1}$ which an inextendible timelike curve intersects in a unique point.}
    \end{figure}

\begin{defn}
    A map $T : M \rightarrow \R$ is said to be a \emph{Cauchy time function} if for each $t \in \R$, $T^{-1}(t)$ is a Cauchy surface of $M$. Alternatively, this means that for each inextendible future causal curve $c : I \rightarrow M$, $T \circ c : I \rightarrow \R$ is bijective and increasing.
\end{defn}

\begin{rem}
    In particular, a Cauchy time function is a time function.
\end{rem}

The fact that a Cauchy time function $T$ is injective gives us the causality of $M$, and the fact that is must be surjective on each inextendible curve tells us that we cannot find a future causal curve which approches the boundary of the spacetime $M$ without passing by every "time".

\begin{prop}\label{GH}
    The three following properties are equivalent :

    \begin{itemize}
        \item $M$ is globally hyperbolic,
        \item $M$ admits a Cauchy surface,
        \item $M$ admits a Cauchy time function.
    \end{itemize}
\end{prop}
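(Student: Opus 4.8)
The plan is to establish the cyclic chain of implications: globally hyperbolic $\Rightarrow$ existence of a Cauchy surface $\Rightarrow$ existence of a Cauchy time function $\Rightarrow$ globally hyperbolic. The last implication is the easiest and I would dispatch it first. Given a Cauchy time function $T$, fix $x,y \in M$ and consider $\Cc(x,y)$. Every causal path from $x$ to $y$ has image contained in the ``causal diamond'' $J^+(x) \cap J^-(y)$, and reparametrizing each such path by $T$ (which is legitimate since $T$ is strictly increasing along causal curves) realizes $\Cc(x,y)$ as a set of paths defined on the compact interval $[T(x), T(y)]$. I would then invoke the standard limit-curve lemma (the paper cites \cite{largescalestructure}, lemma 6.2.1, for limits of causal paths): any sequence in $\Cc(x,y)$ has a subsequence converging uniformly to a causal path, and one checks the limit still goes from $x$ to $y$ and remains causal; hence $\Cc(x,y)$ is sequentially compact, and being a metric space, compact.

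For globally hyperbolic $\Rightarrow$ Cauchy surface, I would follow Geroch's original argument \cite{Gerochsplittingtheorem}. Fix an auxiliary finite measure $\mu$ on $M$ (e.g.\ the Riemannian volume normalized to total mass $1$, cut off by a partition of unity) and define $t^-(x) = \mu(J^-(x))$ and $t^+(x) = \mu(J^+(x))$, then set $\tau(x) = t^-(x)/t^+(x)$. The key technical facts, each requiring compactness of $\Cc$, are: (i) $t^\pm$ are continuous, which follows because compactness of the causal diamonds prevents the volumes $\mu(J^\pm(x))$ from jumping; (ii) $t^-$ is strictly increasing and $t^+$ strictly decreasing along any future causal curve; and (iii) along an \emph{inextendible} future causal curve, $t^- \to 0$ at the past end and $t^+ \to 0$ at the future end — this is precisely where ``no causal curve runs off to the boundary in finite $\mu$-measure'' is used, and it is the step that genuinely needs global hyperbolicity rather than mere causality. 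Granting these, $\log \tau = \log t^- - \log t^+$ is a continuous time function that is a bijection onto $\R$ along every inextendible causal curve, so $S = \tau^{-1}(\text{any value})$ is a Cauchy surface.

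The implication Cauchy surface $\Rightarrow$ Cauchy time function is, in this $C^0$ generality, essentially already contained in the previous step: once one has a Cauchy surface $S$, one shows every inextendible causal curve meets $S$ exactly once, and then the function $\tau$ constructed above (or any continuous time function obtained the same way) has all its level sets Cauchy, so it is a Cauchy time function. Alternatively, and more directly, one defines $T(x)$ by ``how far'' $x$ sits from $S$ — but making that continuous again routes through the volume functions, so I would simply remark that the $\tau$ built under the assumption of global hyperbolicity serves, and note that ``admits a Cauchy surface'' already implies global hyperbolicity by a short argument (a non-compact $\Cc(x,y)$ would, via the limit-curve lemma, produce an inextendible causal curve missing $S$ or meeting it twice).

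The main obstacle I anticipate is fact (iii) above: proving that the measure-theoretic volume functions actually decay to zero along inextendible curves. This is where one must rule out an inextendible causal curve whose causal future (or past) has positive $\mu$-measure ``at the endpoint,'' and it relies on extracting, from any such bad curve together with a limit-curve argument, a causal segment violating compactness of some $\Cc(x,y)$. The continuity of $t^\pm$ (fact (i)) is the second delicate point, since a priori the causal relation $J^+$ is only closed under global hyperbolicity, not in general — so here too compactness of causal diamonds is the workhorse. Everything else (monotonicity, the topology of level sets, reparametrization) is routine.
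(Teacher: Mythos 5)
Your proposal is the classical Geroch argument (volume functions $t^\pm$, their ratio as a time function, plus the limit-curve lemma for the converse direction), which is exactly the proof the paper relies on: Proposition \ref{GH} is stated there as a known Lorentzian result with references to \cite{Gerochsplittingtheorem} and \cite{largescalestructure}, and no proof is given in the text. Your sketch correctly identifies the genuinely delicate steps (continuity of the volume functions and their decay along inextendible curves), so it matches the intended, standard route.
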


This condition actually implies a strong topological rigidity on $M$.

\begin{prop}\label{topologicalrigidity}
    Let $M$ be a globally hyperbolic space with a Cauchy surface $S$. Then $M$ is homeomorphic to $S \times \R$ where each slice $S \times \{*\}$ is a Cauchy surface and each line $\{*\} \times \R$ is a timelike curve.
\end{prop}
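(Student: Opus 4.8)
The plan is to combine the smooth timelike foliation provided by Proposition~\ref{timelikevectorfield} with a Cauchy time function, whose existence follows from Proposition~\ref{GH}. So I would fix a smooth future-oriented timelike vector field $X$ on $M$ and a Cauchy time function $T : M \rightarrow \R$. Every maximal integral curve of $X$ has inextendible image in the sense of Definition~\ref{inextendiblepath}: such a curve is non-periodic and injective because $M$ is causal (a consequence of the existence of $T$), and by maximality it cannot converge in $M$ at either end of its parameter interval. Since $S$ is a Cauchy surface, each maximal integral curve of $X$ therefore meets $S$ in exactly one point, which yields a retraction $r : M \rightarrow S$ sending $x$ to $\gamma_x \cap S$, where $\gamma_x$ denotes the integral curve of $X$ through $x$. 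I would then prove that
\[ \Phi : M \longrightarrow S \times \R, \qquad \Phi(x) = (r(x), T(x)) \]
is a homeomorphism with the stated properties.

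The bijectivity of $\Phi$ is the easy part. For injectivity, if $r(x) = r(y)$ then $x$ and $y$ lie on a common integral curve of $X$, which is an inextendible causal curve; along it $T$ is a bijection onto $\R$ by the defining property of a Cauchy time function, in particular injective, so $T(x) = T(y)$ forces $x = y$. For surjectivity, given $(s,\tau) \in S \times \R$, the maximal integral curve $\gamma$ through $s$ has inextendible image, hence $T \circ \gamma$ is bijective onto $\R$, and the unique point $x \in \gamma$ with $T(x) = \tau$ satisfies $\Phi(x) = (s,\tau)$.

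The main obstacle is continuity of $\Phi$ and of $\Phi^{-1}$. Continuity of the second coordinate is immediate from continuity of $T$, so everything reduces to controlling how the crossing point $r(x)$ moves as $x$ varies, and this is exactly where global hyperbolicity enters. Given $x_n \to x$, the integral curves $\gamma_{x_n}$ converge to $\gamma_x$ uniformly on compact parameter sets by smooth dependence of the flow on initial conditions, but one must rule out the crossing points $r(x_n) \in S$ escaping along the leaves. I would exclude this with a limit-curve argument resting on the compactness of the path spaces $\Cc(\cdot,\cdot)$ (equivalently, on global hyperbolicity, cf.\ \cite{largescalestructure}, lemma 6.2.1): a runaway sequence of crossing points would, after extraction, produce a causal curve through $x$ meeting $S$ at a point other than $r(x)$, or a causal curve confined to a causal diamond yet avoiding $S$, either of which contradicts the Cauchy surface property. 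The same ingredients --- continuity of the flow, continuity of $T$, and a compactness argument pinning down the unique point of a prescribed integral curve with a prescribed $T$-value --- give continuity of $\Phi^{-1}$.

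Finally I would read off the geometric assertions. Since $\Phi(x) \in S \times \{\tau\}$ if and only if $T(x) = \tau$, the slice $S \times \{\tau\}$ pulls back under $\Phi$ to the level set $T^{-1}(\tau)$, which is a Cauchy surface because $T$ is a Cauchy time function; and $\{s\} \times \R$ pulls back to $\{x : r(x) = s\}$, i.e.\ the integral curve of $X$ through $s$, which is timelike by construction. If one wanted $S$ itself to be one of the slices it would suffice to compose $T$ with a suitable homeomorphism of $\R$ along each leaf so that $T$ vanishes on $S$; and upgrading $\Phi$ to a diffeomorphism would additionally require a \emph{smooth} Cauchy time function together with a reparametrization of $X$ normalising $dT(X) = 1$, which is the only point where smoothness of the time function would be needed.
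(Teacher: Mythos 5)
Your construction is exactly the paper's: a smooth timelike vector field from Proposition~\ref{timelikevectorfield} whose inextendible flow lines meet the Cauchy surface exactly once, combined with a Cauchy time function $T$, giving the map $x \mapsto (\varphi(x), T(x))$ onto $S \times \R$. The only difference is that you spell out the continuity and bijectivity checks that the paper compresses into ``by definition of a Cauchy time function, $\psi$ is a homeomorphism,'' so the proposal is correct and essentially identical in approach.
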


\begin{proof}
    Let $T$ be a Cauchy time function with $S = T^{-1}(0)$ and let $X$ be a smooth timelike vector field on $M$, whose existence is guaranteed by \ref{timelikevectorfield}. For each $x \in M$, let $c_x$ be the flow line containing $x$. It is an inextendible timelike path, meaning it must encounter $S$ in a unique point which we will denote by $\varphi(x)$. Finally, let us define the map

    \[\psi : x \in M \longmapsto (\varphi(x), T(x)) \in S \times \R.\]

    By definition of a Cauchy time function, $\psi$ is a homeomorphism for which the slices $S \times \{t\}$ correspond to the Cauchy surfaces $T^{-1}(t)$ and the slices $\{p\} \times \R$ to the timelike flow lines, hence the result.
\end{proof}

For the rest of this paper, our goal will be to generalize the preceeding notions to pseudo-Riemannian spaces of higher signature.

\section{Causality in higher signature}

\subsection{Time orientation in higher signature}\label{pseudoriemannianexamples}

Let~$M$ be connected a smooth manifold and~$(g_x)_{x \in M}$ a smooth family of quadratic forms of signature~$(p,q)$. The pair~$(M,g)$ is said to be a \emph{pseudo-Riemannian manifold} of signature~$(p,q)$.

\begin{rem}
    Taking~$q = 1$ gives us a Lorentzian geometry on~$M$.
\end{rem}

\noindent As this is a generalization of Lorentzian geometry, we will use the same terminology.

\begin{defn}
    A vector~$v \in T_x M$ is said to be 

    \begin{itemize}
        \item \emph{spacelike} when~$g(v) > 0$, 
        \item \emph{lightlike} when~$g(v) = 0$, 
        \item \emph{causal} when~$g(v) \leqslant 0$,
        \item \emph{timelike} when~$g(v) < 0$.
    \end{itemize}
\end{defn}

\noindent Since~$p$ and~$q$ can both be greater than~$2$, it is also possible to discuss the nature of 
the~$2$-planes in~$T_x M$, or more generally, of the~$k$-planes for~$k \leqslant q$.

\begin{defn}
    A tangent vector space~$Q \subset T_x M$ of dimension~$k$ is said to be 

    \begin{itemize}
        \item \emph{spacelike} if~$g|_Q > 0$, 
        \item \emph{lightlike} if~$g|_Q = 0$, 
        \item \emph{causal} if~$g|_Q \leqslant 0$,
        \item \emph{timelike} if~$g|_Q < 0$.
    \end{itemize}
\end{defn}

\noindent It is possible to extend the notion of time orientability where, instead of taking a coherent orientation of timelike vectors, we take an orientation of timelike tangent vector spaces of dimension~$q$. Let~$x$ be a point in~$M$. The set~$Gr_q^{<0}(T_x M)$ of~$q$-planes of~$T_x M$ on which~$g_x$ is definite negative can be viewed as the Riemannian symmetric space~$\so_0(p,q)/\so(p) \times \so(q)$. In particular, it is contractible. Let~$\widetilde{Gr_q^{<0}(T_x M)}$ be the set of oriented timelike~$q$-planes of~$T_x M$. It is a double covering of~$Gr_q^{<0}(T_x M)$ which is simply connected, therefore it is the disjoint union of two copies of~$Gr_q^{<0}(T_x M)$. 

\begin{defn}
    The choice of one of those components gives us a \emph{local time orientation} of~$M$ at~$x$.
\end{defn}

\noindent This time orientation is called local as it may be extended on a neighborhood of $x$. In the same manner as in the Lorentzian case, we are going to require that this local time orientation 
on the timelike~$q$-vector spaces in~$T_x M$ extends to the whole manifold~$M$ in a coherent fashion.

\begin{defn}
    When the fiber bundle~$\overline{Gr_q^{<0}(TM)}$ is trivial,~$M$ is said to be \emph{time orientable}. The choice of a connected component gives a \emph{time orientation} of~$M$. The chosen component~$Gr_q^{<0}(TM)^+$ is the bundle of \emph{future oriented} tangent~$q$-spaces.
\end{defn}

\noindent From now on, we will assume that~$M$ is time oriented.

\begin{defn}
    An oriented causal tangent~$q$-vector space~$Q \subset T_x M$ is \emph{future oriented} if it is in the closure of~$Gr_q^{<0}(T_x M)^+$ in the Grassmanian of~$q$-vector spaces of~$T_x M$.
\end{defn}

\noindent Taking a time orientation on~$M$ is equivalent to taking a distribution 
of oriented timelike~$q$-vector spaces of~$M$.

\begin{prop}
    Let~$M$ be a time oriented pseudo-Riemannian manifold~$M$ of signature~$(p,q)$. There exists a smooth distribution of oriented timelike tangent vector spaces of~$M$ of dimension~$q$.
\end{prop}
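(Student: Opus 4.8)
The statement is the higher-signature analogue of Proposition \ref{timelikevectorfield}, and I would prove it along the same lines: the desired distribution is nothing but a smooth section of an explicit fibre bundle over $M$ whose typical fibre is contractible. First note that $Gr_q^{<0}(TM)$ is a smooth open subbundle of the Grassmann bundle $Gr_q(TM) \to M$ — local triviality is inherited from that of $TM$, and being negative-definite of dimension $q$ is an open condition on $q$-planes — and that $Gr_q^{<0}(TM)^+$, the sheet of the two-sheeted cover of $Gr_q^{<0}(TM)$ by oriented timelike $q$-planes selected by the time orientation, is again a smooth fibre bundle, the covering projection restricting to a diffeomorphism $Gr_q^{<0}(TM)^+ \xrightarrow{\ \sim\ } Gr_q^{<0}(TM)$. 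Hence it is enough to produce a smooth section of $Gr_q^{<0}(TM) \to M$ and to transport it through this diffeomorphism; this last step is exactly where time orientability enters.

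To obtain the section one may argue abstractly, as in the proof of Proposition \ref{timelikevectorfield}: each fibre $Gr_q^{<0}(T_xM)$ is the Riemannian symmetric space $\so_0(p,q)/\big(\so(p)\times\so(q)\big)$, which is of noncompact type, hence diffeomorphic to $\R^{pq}$ and in particular contractible; a smooth fibre bundle over a paracompact smooth manifold with fibre diffeomorphic to a Euclidean space admits a smooth global section (see \cite{topologyoffiberbundles}).

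Alternatively, and explicitly, fix an auxiliary Riemannian metric $\overline{g}$ on $M$ and let $A \in \mathrm{End}(TM)$ be the $\overline{g}$-self-adjoint field defined by $g(u,v) = \overline{g}(Au,v)$. Since $g$ has signature $(p,q)$, $A_x$ is invertible at every point and has exactly $q$ negative eigenvalues; because $0 \notin \mathrm{spec}(A_x)$ and the spectrum varies continuously, near any point the negative part of $\mathrm{spec}(A_x)$ can be separated from the rest by a fixed loop $\gamma$ in $\mathbb{C}\setminus\{0\}$, so the Riesz projection $P_x = \tfrac{1}{2\pi i}\oint_\gamma (z - A_x)^{-1}\,dz$ depends smoothly on $x$, has constant rank $q$, and its image is a $g_x$-negative-definite $q$-plane; then $x \mapsto \mathrm{im}\,P_x$ is the desired smooth section of $Gr_q^{<0}(TM)$, which we orient by composing with $Gr_q^{<0}(TM) \xrightarrow{\ \sim\ } Gr_q^{<0}(TM)^+$. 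In either approach the only genuinely non-formal point is the smoothness of the section — granted by the cited fibre-bundle fact, or concretely by the holomorphic functional calculus together with the fact that $A$ never has $0$ as an eigenvalue — while everything else is routine.
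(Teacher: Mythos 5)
Your proposal is correct, and its first half is essentially the paper's own argument: the paper proves this in one line by observing that $Gr_q^{<0}(TM)^+$ has contractible fibers (each fiber being the symmetric space $\so_0(p,q)/\so(p)\times\so(q)$) and therefore admits a smooth section, citing the same fiber-bundle fact you invoke; your only refinement is to section the unoriented bundle $Gr_q^{<0}(TM)$ first and then lift through the fiberwise diffeomorphism $Gr_q^{<0}(TM)^+ \simeq Gr_q^{<0}(TM)$, which makes the role of the time orientation slightly more explicit but is the same mechanism. Your second argument, however, is a genuinely different and more constructive route: fixing an auxiliary Riemannian metric $\overline{g}$, writing $g(u,v)=\overline{g}(Au,v)$ with $A$ invertible and $\overline{g}$-self-adjoint, and taking the rank-$q$ spectral (Riesz) projection onto the negative part of the spectrum produces the timelike distribution directly, with smoothness coming from the holomorphic functional calculus rather than from the section-existence theorem for bundles with contractible fibers; the image is indeed $g$-negative-definite of dimension $q$ because eigenvectors of a self-adjoint operator for distinct eigenvalues are $\overline{g}$-orthogonal, and the time orientation is then used exactly once, to orient the resulting section. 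This explicit construction buys independence from the obstruction-theoretic input and fits naturally with the paper's own use of an auxiliary Riemannian metric in the converse direction (building a $(p,q)$ metric from a given $q$-distribution), at the cost of being longer than the paper's one-line abstract argument.
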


\begin{proof}
    The bundle~$Gr_q^{<0}(TM)^+$ has contractible fibers, meaning it must admit a smooth section (see \cite{topologyoffiberbundles}), hence the result.
\end{proof}

\noindent The existence of a smooth distribution of dimension~$q$ on a smooth manifold~$M^{p+q}$ 
characterizes the existence of a pseudo-Riemannian metric of signature~$(p,q)$ on~$M$.

\begin{prop}
    Let~$M$ be a smooth manifold of dimension~$p+q$.~$M$ admits a smooth distribution of dimension~$q$ if and only if~$M$ admits a smooth pseudo-Riemannian metric.
\end{prop}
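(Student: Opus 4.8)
The plan is to prove each implication by fixing once and for all an auxiliary Riemannian metric $\overline{g}$ on $M$ — which exists by the usual partition-of-unity argument — and to transfer data between a rank-$q$ distribution and a signature-$(p,q)$ metric through $\overline{g}$-orthogonality.

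For the direction ``distribution $\Rightarrow$ metric'', suppose $D \subset TM$ is a smooth distribution of rank $q$. At each $x$ we have the $\overline{g}$-orthogonal splitting $T_x M = D_x \oplus D_x^{\perp}$ with $\dim D_x = q$, $\dim D_x^{\perp} = p$, and the orthogonal projections $\pi_{D_x}, \pi_{D_x^{\perp}}$ are smooth sections of $\mathrm{End}(TM)$ (write them out in a local $\overline{g}$-orthonormal frame adapted to $D$, obtained by Gram--Schmidt from any local frame adapted to $D$). Then
\[ g_x(v) = \overline{g}\big(\pi_{D_x^{\perp}}(v)\big) - \overline{g}\big(\pi_{D_x}(v)\big) \]
defines a smooth family of quadratic forms, positive definite on the rank-$p$ bundle $D^{\perp}$ and negative definite on the rank-$q$ bundle $D$, hence of signature $(p,q)$. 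This is exactly the construction recalled after Proposition~\ref{timelikevectorfield} in the Lorentzian case.

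For the converse, let $g$ be a pseudo-Riemannian metric of signature $(p,q)$ and define $A \in \Gamma(\mathrm{End}(TM))$ by $g_x(u,v) = \overline{g}(A_x u, v)$. Each $A_x$ is $\overline{g}$-self-adjoint, and since $g_x$ is nondegenerate it is invertible, so by Sylvester's law of inertia it has exactly $p$ positive and $q$ negative eigenvalues counted with multiplicity. Let $D_x$ be the sum of the eigenspaces of $A_x$ for its negative eigenvalues: then $\dim D_x = q$, and because these eigenspaces are $\overline{g}$-orthogonal and $A_x$-invariant, $g_x$ restricts to a negative definite form on $D_x$. The only genuine point to check is smoothness of $x \mapsto D_x$, and this is where I expect the main obstacle: the individual eigenvalues of $A_x$ need not vary smoothly (they may collide), so one cannot argue eigenspace by eigenspace. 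The fix is that only the sign of the spectrum matters. Fix $x_0$ and $\delta > 0$ smaller than $|\lambda|$ for every eigenvalue $\lambda$ of $A_{x_0}$; by continuity of the spectrum, on a neighborhood $V$ of $x_0$ the spectrum of $A_x$ avoids $[-\delta,\delta]$, so a single contour $\Gamma \subset \mathbb{C}$ encloses precisely the negative part of the spectrum of $A_x$ for all $x \in V$, and the Riesz projector
\[ P_x = \frac{1}{2\pi i} \oint_{\Gamma} (z\,\mathrm{Id} - A_x)^{-1}\, dz \]
is a smooth field of projections on $V$ with image $D_x$ and constant rank $q$. Gluing over a cover of $M$ shows $D = \mathrm{Im}(P)$ is a well-defined smooth rank-$q$ distribution, finishing the proof. (Equivalently, one can phrase the whole statement in terms of structure groups: a signature-$(p,q)$ metric is a reduction of the frame bundle of $M$ to $\mathrm{O}(p,q)$, and since $\mathrm{O}(p,q)$ deformation retracts onto its maximal compact $\mathrm{O}(p) \times \mathrm{O}(q)$ with contractible quotient, such a reduction exists exactly when a reduction to $\mathrm{O}(p) \times \mathrm{O}(q)$ does, i.e.\ exactly when $TM$ splits as a rank-$p$ plus a rank-$q$ subbundle — which is the data of a rank-$q$ distribution together with $\overline{g}$.)
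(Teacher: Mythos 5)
Your proof is correct, and while your ``distribution $\Rightarrow$ metric'' direction is exactly the paper's construction ($g = \overline{g} \circ \pi_{D^{\perp}} - \overline{g} \circ \pi_{D}$ for an auxiliary Riemannian $\overline{g}$), your converse direction takes a genuinely different route. The paper argues abstractly: the bundle $Gr_q^{<0}(TM)$ of $g$-timelike $q$-planes has contractible fibers (each fiber is the symmetric space $\so_0(p,q)/\so(p)\times\so(q)$), hence admits a smooth global section by the standard obstruction-theoretic result it cites, and that section is the desired distribution. You instead build the distribution explicitly as the negative eigenbundle of the $\overline{g}$-self-adjoint operator $A$ with $g(u,v)=\overline{g}(Au,v)$, and you correctly identify and resolve the one real issue — possible eigenvalue collisions — by using the spectral (Riesz) projector onto the negative part of the spectrum, which is canonical and smooth even when individual eigenvalues are not; the gluing step is then automatic since all local Riesz projectors coincide with this canonical spectral projector. (Only cosmetic care is needed with your contour: on a relatively compact neighborhood the negative spectrum lies in a fixed interval $[-C,-\delta']$ bounded away from $0$, so a fixed contour around it works.) The trade-off: the paper's argument is shorter but leans on the section-existence theorem for bundles with contractible fibers, while yours is self-contained linear algebra plus elementary holomorphic functional calculus; both constructions in fact yield a $g$-timelike distribution, which is the form in which the paper reuses this fact later. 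Your closing remark via reduction of the structure group from $\mathrm{O}(p,q)$ to its maximal compact $\mathrm{O}(p)\times\mathrm{O}(q)$ is essentially the paper's argument repackaged, since the contractibility of $\mathrm{O}(p,q)/\mathrm{O}(p)\times\mathrm{O}(q)$ is the same input as the contractibility of the fibers of $Gr_q^{<0}(TM)$.
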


\begin{proof}
Assume~$M$ admits a pseudo-Riemannian metric~$g$. The fiber bundle~$Gr_q^{<0}(TM)$ has contractible fibers, thus it must admits a smooth section, hence the result.

    \noindent Conversely, assume that~$M$ admits a smooth distribution~$D$ of dimension~$q$. Let~$\overline{g}$ be a Riemannian metric on~$M$ and let~$x \in M, v \in T_x M$. We will write

    \[g(v) = \overline{g}(\pi_{D(x)^{\perp}}(v)) - \overline{g}(\pi_{D(x)}(v)),\]

    \noindent where~$\pi_E$ is the orthogonal projection of image~$E$. This gives us a metric of signature~$(p,q)$ on~$M$ for which~$D$ is a smooth distribution of timelike tangent spaces, hence the result.
\end{proof}

\begin{rem}
    Contrary to the Lorentzian case, since not every~$q$-dimensional distribution of vector spaces is integrable, this does not give a timelike foliation of~$M$. It was however proven by Thurston in \cite{Thurston2}, \cite{Thurston1974} that every~$q$-dimensional distribution on a manifold~$M$ is always homotopic to an integrable distribution. A natural question one could ask would be whether it is always possible, given a pseudo-Riemannian metric, to find an integrable distribution of dimension~$q$ which is timelike at every point. As we will see later, the answer to this question is negative.
\end{rem}

\begin{defn}
    A pseudo-Riemannian manifold of signature~$(p,q)$ is called a \emph{$(p,q)$-spacetime} if it is both orientable and time orientable.
\end{defn}

\noindent Examples of non time-orientable pseudo-Riemannian manifolds include but are not limited to 
the metric products~$(P \times N, ds_P^2 - ds_N^2)$ where~$N$ is a non-orientable manifold of dimension~$q$. In the rest of this paper, it will be assumed that~$M$ is a~$(p,q)$ spacetime. 
Let us now generalize the notion of causal curve in higher signature. 
As we have given a way to distinguish different~$k$-vector spaces based on their induced signature, it is natural to extend 
the notion of causal curve to the whole family of immersions of dimension~$k$ for~$k \leqslant q$.

\begin{defn}
    An immersion of dimension~$k \leqslant q$,~$f \colon N_0^k \rightarrow M$ is said to be \emph{timelike} (resp. \emph{causal}, \emph{lightlike}, \emph{spacelike}) if for each~$x \in N_0$, the image of~$d_x f$ in~$T_{f(x)} M$ is timelike (resp. causal, lightlike, spacelike). 
\end{defn}

\noindent Alternatively, one could define~$f \colon N_0 \rightarrow M$ to be timelike (resp. causal) if the pullback metric~$f^* g$ on~$N_0$ is negative (resp. non-positive). 
When $M$ is furthermore time oriented, it is possible to ask for an immersion of dimension~$q$ to be future-oriented or past-oriented.

\begin{defn}
    Let~$N_0$ be an oriented space of dimension~$q$. A causal immersion~$f \colon N_0 \rightarrow M$ is future-oriented if for each~$x \in N_0$,~$\Imm(df)$ with the orientation induced by~$N_0$ is future-oriented.
\end{defn}

\noindent In fact, the existence of a time orientation on~$M$ insures us that any causal immersion of dimension~$q$ is orientable 
in a way that makes it future-oriented.

\begin{lemma}
    Let~$f \colon N_0 \rightarrow M$ be a causal immersion of dimension~$q$ in a spacetime~$M$. Then the manifold~$N_0$ is orientable and one of its orientations makes~$f$ future-oriented.
\end{lemma}

\begin{proof}
    It suffices to take the pullback orientation of the time-orientation of~$M$ on~$N_0$ via~$f$. This also makes~$f$ future-oriented.
\end{proof}

\noindent Finally, note that the notion of time orientation only depends on the conformal class of the manifold $M$. Let us exhibit some examples of commonly encountered pseudo-Riemannian manifolds by generalizing those given in the Lorentzian case.

\subsection*{The pseudo-Euclidian space}

Let~$Q = x_1^2 + \cdots + x_p^2 - y_1^2-\cdots-y_q^2$ be the standard quadratic form of signature~$(p,q)$ on~$\R^{p+q}$. The pseudo-Riemannian metric defined on~$\R^{p+q}$ which is constant equal to~$Q$ is called the \emph{pseudo-Euclidian metric} on~$\R^{p+q}$. The space~$(\R^{p+q}, Q)$ is called the \emph{pseudo-Euclidian space} and is usually denoted by~$\R^{p,q}$. The pseudo-Euclidian space is the model space for pseudo-Riemannian manifolds of curvature~$0$.

\subsection*{Pseudo-Riemannian products}

Let~$(P, g_P)$ and~$(N, g_N)$ be two Riemannian manifolds. One may define the pseudo-Riemannian product of~$P$ and~$N$ as~$(P \times N, g_P - g_N)$. 

\begin{lemma}\label{lemmacausalgeodesic}
When $P$ and $N$ are complete, if two points $(x_1, y_1)$ and $(x_2, y_2)$ in $P \times N$ verify $d_P(x_1, x_2) \leqslant d_N(y_1, y_2)$ (resp. $d_P(x_1, x_2) < d_N(y_1, y_2)$), then there is a causal (resp. timelike) geodesic between them.
\end{lemma}

\begin{proof}
Since $P$ and $N$ are complete, one may consider the geodesics $\gamma_P$ and $\gamma_N$ parametrized by $[0,1]$, linking $x_1$ to $x_2$ and $y_1$ to $y_2$ having lengths $d_P(x_1, x_2)$ and $d_N(y_1, y_2)$. The path $(\gamma_P, \gamma_N) : [0,1] \rightarrow P \times N$ is then a causal (resp. timelike) geodesic between $(x_1, y_1)$ and $(x_2, y_2)$, hence the result.
\end{proof}

\subsection*{The Einstein space}

Let~$Q$ be the standard quadratic form of signature~$(p+1,q+1)$ on~$\R^{p+1,q+1}$ and let~$\widehat{\ein}^{p,q}$ be the space of isotropic half-lines in~$\R^{p+1,q+1}$.

\begin{prop}
The metric~$Q$ endows~$\widehat{\ein}^{p,q}$ with a conformal pseudo-Riemannian metric.
\end{prop}

\noindent The Einstein space is conformally flat, meaning that any point in~$\widehat{\ein}^{p,q}$ admits a neighborhood which is conformally equivalent to an open set of the pseudo-Euclidian space. In fact, the Einstein space admits charts which are conformally equivalent to the whole of~$\R^{p,q}$.

\begin{defn}
Let~$x \in \widehat{\ein}^{p,q}$ and let~$\C(x)$ be the projectivisation in~$\widehat{\ein}^{p,q}$ of~$x^{\perp} \cap \C$.~$\C(x)$ is called the \emph{isotropic cone} of~$x$. It is also the reunion of all the lightlike geodesics containing~$x$.
\end{defn}

\begin{prop}
    Let~$x \in \widehat{\ein}^{p,q}$ and let~$\Omega(x) = \widehat{\ein}^{p,q} \setminus \C(x)$. The set~$\Omega(x)$ is the union of two connected components, each of which is conformally equivalent to the pseudo-Euclidian space~$\R^{p,q}$. When considering the quotient by antipody~$\ein^{p,q}$,~$\Omega(x)$ is connected and conformally equivalent to~$\R^{p,q}$.
    \end{prop}

\noindent The Einstein space also admits a nice conformal model.

\begin{prop}
The Einstein space is conformally equivalent to the split product~$(\s^p \times \s^q, g_{\s^p} - g_{\s^q})$ where~$g_{\s^p}$ and~$g_{\s^q}$ are the round metrics on~$\s^p$ and~$\s^q$.
\end{prop}

\subsection*{The pseudo-spherical space}

Let~$Q$ be the standard quadratic form of signature~$(p+1,q)$ on~$\R^{p+1,q}$ and let~$\s^{p,q}$ be the hypersurface~$(Q = 1)$ in~$\R^{p+1,q}$.

\begin{prop}
    The metric~$Q$ in~$\R^{p+1,q}$ endows by restriction~$\s^{p,q}$ with a Lorentzian metric of signature~$(p,q)$.
\end{prop}

The space $\s^{p,q}$ has constant curvature $1$. It is simply connected when $p \geqslant 2$.

\subsection*{The pseudo-hyperbolic space}

Let~$Q$ be the standard quadratic form of signature~$(p,q+1)$ on~$\R^{p,q+1}$ and let~$\h^{p,q}$ be the hypersurface~$(Q = -1)$ in~$\R^{p,q+1}$.

\begin{prop}
The metric~$Q$ in~$\R^{p,q+1}$ endows~$\h^{p,q}$ with a pseudo-Riemannian metric of signature~$(p,q)$.
\end{prop}

\noindent When $q \geqslant 2$, the pseudo-hyperbolic space is the model for pseudo-Riemannian manifolds of curvature~$-1$.

\subsection{Causality in the pseudo-Euclidian space}\label{causalityminkowski}

In this section, we will assume that~$M$ is the pseudo-Euclidian space 
$\R^{p,q}$, i.e the flat space~$\R^{p+q}$ endowed with the constant pseudo-Riemannian 
metric~$g = dx_1^2 + \cdots + dx_p^2 - dy_1^2 - \cdots - dy_q^2$. When~$q = 1$,~$M$ 
is the Minkoswki space~$\R^{p,1}$ and is the simplest Lorentzian space in which we can study causality. Since we have already generalized the notion of time orientation, 
this gives us a good motivation for the elaboration of a non-trivial notion of causality in~$\R^{p,q}$. 

A first natural attempt would be to say that a vector~$v$ is causally related to the origin 
if and only if there exists a causal path~$c \colon I \rightarrow \R^{p,q}$ which starts at the origin 
and ends at the vector~$v$, however, it is easy to see that this definition is trivial:

\begin{prop}
    Assume $q \geqslant 2$ and let~$v \in \R^{p,q}$. There exists a causal path~$c \colon I \rightarrow \R^{p,q}$ 
    which goes from the origin to~$v$.
\end{prop}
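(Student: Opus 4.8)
The plan is to exploit the fact that in signature $(p,q)$ with $q \geqslant 2$ the set of timelike vectors (the set $\{g < 0\}$) is connected, which is exactly the phenomenon highlighted in the introduction as the source of the failure of Lorentzian techniques. Concretely, I would first reduce to connecting the origin to $v$ by a \emph{causal} path and then, if desired, promote it to something more explicit.

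First I would dispose of the easy cases. If $v$ is spacelike, pick a timelike vector $w$ (any vector in the span of $\partial_{y_1}$ suffices) and note that $v - w'$ can be reached, so it is enough to handle the case where $v$ is causal or where we must route through timelike vectors. Actually the cleanest approach: if $v = 0$ take the constant path; if $g(v) \leqslant 0$, the straight segment $c(t) = tv$ has $\dot c(t) = v$ with $g(\dot c(t)) = g(v) \leqslant 0$, so it is already causal and we are done. The entire content is therefore the case where $v$ is \emph{spacelike}, $g(v) > 0$.

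So assume $g(v) > 0$. Here I would use that $q \geqslant 2$ to build a piecewise-linear (hence, after smoothing, smooth) causal path. Write $v = v_+ + v_-$ with $v_+ \in \R^p \times \{0\}$ spacelike part and $v_- \in \{0\} \times \R^q$. Choose two linearly independent timelike directions, e.g. $e = \partial_{y_1}$ and $e' = \partial_{y_2}$, and a large parameter $\lambda$. Consider the three-segment path $0 \to \lambda e' \to \lambda e' + v \to v$: wait — rather, the robust construction is to go $0 \to A \to A + v$ where $A$ is chosen timelike and so large (in the $y$-directions, using both $y_1$ and $y_2$) that the segment from $A$ to $A+v$, whose velocity is $v$, can be replaced: no. Let me instead use the standard "zig-zag": the segment with velocity $u$ is causal iff $g(u) \leqslant 0$, and $v = u_1 + u_2$ where $u_1 = \tfrac12 v + N e$ and $u_2 = \tfrac12 v - N e$; then $g(u_i) = \tfrac14 g(v) \mp N\,g(v,e)\cdot 2 + N^2 g(e)$. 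Since $g(e) = -1 < 0$, for $N$ large both $g(u_1)$ and $g(u_2)$ are negative, so the two-segment path $0 \to u_1 \to u_1 + u_2 = v$ is piecewise-linear and causal (in fact timelike on the interiors). Finally, the concatenation of two smooth causal (here timelike) segments whose velocities at the junction lie in the connected set $\{g < 0\}$ is a continuous causal path by the analogue of the corollary following Proposition~\ref{Lorentzianlocalstructure}; and it may be smoothed near the corner keeping the velocity in $\{g < 0\}$, which is open.

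The main obstacle — really the only subtlety — is making sure the junction condition is legitimate: in the Lorentzian corollary one needed the two velocities to lie in the \emph{same} connected component of the timelike cone, whereas here one must observe that for $q \geqslant 2$ there is only one component, so $u_1$ and $u_2$ are automatically "compatibly oriented" and the concatenation is genuinely a causal (continuous, or after smoothing smooth) path. This is precisely the point the proposition is meant to illustrate, so I would state it explicitly rather than bury it.
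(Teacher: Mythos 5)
Your reduction (straight segment if $g(v)\leqslant 0$, so only spacelike $v$ matters) is fine, and your route is genuinely different from the paper's: you write $v=u_1+u_2$ with $u_1=\tfrac12 v+Ne$, $u_2=\tfrac12 v-Ne$ both timelike for $N$ large (the cross term is $\pm N\,g(v,e)$ rather than $\pm 2N\,g(v,e)$, but this is harmless) and concatenate two straight timelike segments, whereas the paper prescribes the velocity directly: a small constant spacelike component plus a unit vector rotating in a negative $2$-plane (a helix), followed by a correction path moving only in the negative directions. Both arguments use $q\geqslant 2$ at the same conceptual spot, namely the connectedness of the timelike cone.

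The gap is in the last step. The assertion that the corner ``may be smoothed keeping the velocity in $\{g<0\}$, which is open'' is not justified: the standard way to smooth a corner while preserving the endpoints is to mollify the velocity, but mollification only keeps the velocity in the convex hull of the nearby velocities, and the hull of $\{u_1,u_2\}$ leaves the cone --- its midpoint is $\tfrac12 v$, which is spacelike. Openness and connectedness do let you choose a velocity path from $u_1$ to $u_2$ inside $\{g<0\}$ (necessarily detouring through a second negative direction; this is exactly where $q\geqslant 2$ enters), but then the displacement contributed on the smoothing window is no longer $\varepsilon(u_1+u_2)$, so the smoothed path no longer ends at $v$ and a further correction is needed; carrying this out smoothly with endpoint exactly $v$ is essentially the paper's helix-plus-correction construction (the paper also writes ``up to a smoothing'', but there the junction velocities can be arranged to share their negative part, so the chord between them stays in the cone and mollification does apply). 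Note also that the Lorentzian corollary on concatenation you invoke is about continuous causal paths with a time-orientation hypothesis, and the paper does not define continuous causal one-dimensional paths in signature $(p,q)$, so it cannot be cited verbatim. The cleanest repair is to prescribe the velocity field globally with integral exactly $v$: a small constant multiple of $v$ plus a vector rotating through a whole number of turns in the span of two negative directions, which is precisely the paper's proof.
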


\begin{figure}[h!bt]
    \begin{center}
      \includegraphics[width=.7\linewidth]{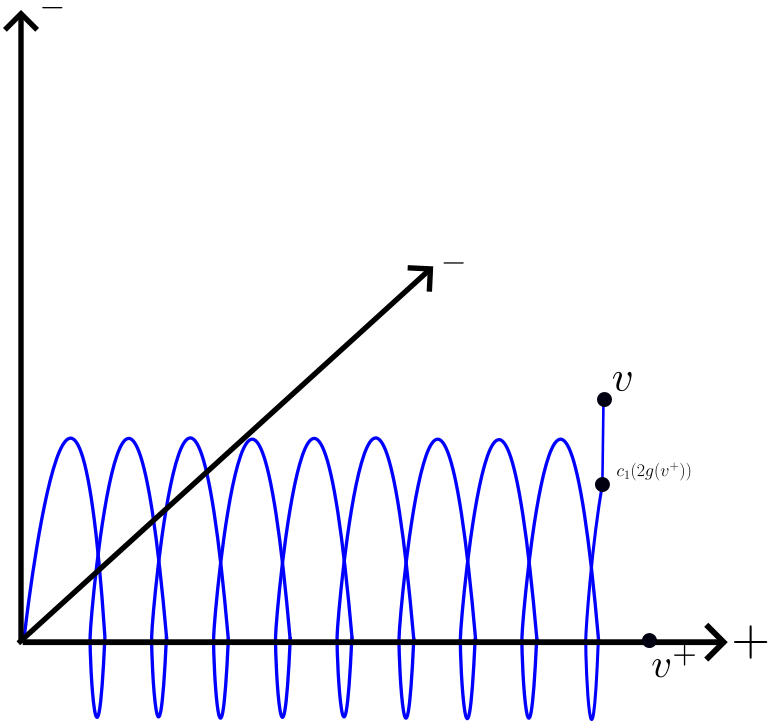}
      \end{center}
      \caption[]{The timelike curve goes from the origin to the vector~$v$.}
    \end{figure}\label{causalpathimage}

\begin{proof}
    Let~$v^+ = \pi_{\R^{p,0}}(v)$. If $v^+ = 0$, $v$ is in $\R^{0,q}$ and a simple segment does the trick. Otherwise, let us define the path 

    \[c_1 \colon t \in [0, 2 g(v^+)] \longmapsto t \frac{v^+}{2g(v^+)} + (\cos(t)-1) e_{q+1} + \sin(t) e_{q+2}, \]

    \noindent for which~$\forall t < 2 g(v^+)$,~$g(\dot{c_1}(t)) = - \frac{1}{2}$,~$c_1(0) = 0$ and~$\pi_{\R^{p,0}}(c_2(2 g(v^+))) = v^+$.
    Since~$\pi_{\R^{p,0}}(c_1(2 g(v^+))) = v^+$, it is possible to take a smooth path~$c_2$ which goes from~$c_1(2 g(v^+))$ to~$v$ while staying in 
    an affine copy of~$\R^{0,q}$. Up to a smoothing, the concatenation of~$c_1$ and~$c_2$ gives a smooth timelike path from the origin to $v$ (see picture \ref{causalpathimage}).
\end{proof}

\noindent This naïve approach to causality in higher signature seems to be doomed. Another natural 
generalization would be to consider smooth timelike immersions of dimension~$q$ instead of smooth paths, 
since the notion of time orientation we have introduced concerns timelike~$q$-vector spaces. 
One would then be tempted to define that~$v$ is causally related to the origin if and only if there exists 
a timelike immersion~$f \colon N_0 \rightarrow \R^{p,q}$ of dimension~$q$ containing in its image both~$v$ and the origin. 
However, this definition also fails to yield an interesting relation.

\begin{prop}
    Let~$v \in \R^{p,q}, q \geqslant 2$. There exists a timelike immersion~$f \colon N_0 \rightarrow \R^{p,q}$ 
    of dimension~$q$ containing both~$v$ and the origin in its image.
\end{prop}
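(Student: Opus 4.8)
The plan is to mimic the one–dimensional construction from the previous proposition, but now ``thickening'' the timelike curve into a timelike immersed $q$–manifold. The key observation is that in $\R^{p,q}$ with $q \geqslant 2$ we have at least two ``time'' directions $e_{p+1}, \dots, e_{p+q}$ to play with, and a timelike curve can be pushed transversally in the remaining time directions while keeping the induced metric negative definite.

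First I would reduce to the model situation. Write $v = v^+ + v^-$ with $v^+ = \pi_{\R^{p,0}}(v)$ and $v^-$ in the span of the $e_{p+1}, \dots, e_{p+q}$. By the previous proposition there is a smooth timelike path $c : [0,1] \to \R^{p,q}$ with $c(0) = 0$, $c(1) = v$, and (after the smoothing used there) $g(\dot c(t)) < 0$ for all $t$; moreover one can arrange $\dot c$ to stay uniformly timelike, i.e. $g(\dot c(t)) \leqslant -\varepsilon$ for some $\varepsilon > 0$. Now I take $N_0 = [0,1] \times \R^{q-1}$ (or a small neighbourhood of $[0,1] \times \{0\}$ therein — any $q$–manifold containing a disc will do, since we only need \emph{an} immersion, not an embedding of a prescribed manifold), and define
\[
f(t, s_1, \dots, s_{q-1}) = c(t) + \sum_{i=1}^{q-1} s_i\, e_{p+1+i},
\]
after relabelling so that $e_{p+1}$ is the time direction already used by $c_1$ in the previous construction and $e_{p+2}, \dots, e_{p+q}$ are the remaining, unused time directions. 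Then $f(0,0,\dots,0) = 0$ and $f(1, 0, \dots, 0) = v$, so both the origin and $v$ lie in the image.

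It remains to check that $f$ is a timelike immersion, and this is the step I expect to require the only real (though still routine) computation. The differential $d_{(t,s)}f$ has image spanned by $\dot c(t) + \sum_i \dot s_i(\cdots)$ — more precisely by $\dot c(t)$ together with $e_{p+2}, \dots, e_{p+q}$, so $f$ is an immersion provided $\dot c(t)$ is not in the span of $e_{p+2}, \dots, e_{p+q}$, which holds because $\dot c$ has a nonzero component along $e_{p+1}$ or along $\R^{p,0}$ by the explicit formula for $c_1$ in the previous proof (one should pick the pushing directions $e_{p+2},\dots,e_{p+q}$ disjoint from the directions $c$ actually moves in, which is possible since $c$ uses only $\R^{p,0} \oplus \mathrm{span}(e_{p+1})$). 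The induced metric in the basis $(\dot c(t), e_{p+2}, \dots, e_{p+q})$ is block: the diagonal entries are $g(\dot c(t)) \leqslant -\varepsilon < 0$ and $g(e_{p+1+i}) = -1 < 0$, while the off–diagonal entries $g(\dot c(t), e_{p+1+i})$ vanish because $\dot c$ has no component along $e_{p+2}, \dots, e_{p+q}$. Hence $f^* g$ is diagonal with strictly negative entries, so it is negative definite, i.e. $f$ is timelike. The main (and only) obstacle is purely bookkeeping: making sure the transversal pushing directions are chosen among the time directions \emph{not} used by the curve $c$ of the previous proposition, so that the cross terms in $f^* g$ genuinely vanish; once that is arranged the conclusion is immediate.
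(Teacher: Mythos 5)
There is a genuine gap, and it lies exactly in the step you flagged as ``bookkeeping''. Your construction pushes the curve along a \emph{fixed} set of negative coordinate directions $e_{p+2},\dots,e_{p+q}$, and the vanishing of the cross terms rests on the claim that the curve $c$ from the previous proposition moves only in $\R^{p,0}\oplus\mathrm{span}(e_{p+1})$. That claim is false: the spiral $c_1$ of the previous proof necessarily uses \emph{two} negative directions (its velocity has components $-\sin(t)$ and $\cos(t)$ along them — this is precisely why the construction needs $q\geqslant 2$; with a single negative direction you would be in Minkowski space, where a timelike curve cannot reach a spacelike $v$), and the final segment $c_2$ moves inside a full affine copy of $\R^{0,q}$ to reach the arbitrary negative part of $v$. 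So with only $q$ negative directions available you cannot reserve $q-1$ of them as pushing directions disjoint from those used by the curve, and once $\dot c(t)$ has a component in the pushing plane the induced metric need not be negative definite (take $\dot c=e_1+\sqrt{2}\,e_{p+2}$: it is timelike, but its span together with $e_{p+2}$ contains the spacelike vector $e_1$).

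Moreover, the defect cannot be repaired by choosing a cleverer curve while keeping a \emph{constant} negative $(q-1)$-plane $W$ of pushing directions. If $\mathrm{span}(\dot c(t))\oplus W$ is negative definite for every $t$, then the $g$-orthogonal projection $\pi$ onto the Minkowski space $W^{\perp}$ (of signature $(p,1)$) sends $c$ to a timelike curve from $0$ to $\pi(v)$, so $\pi(v)$ must be timelike; but writing $v=\pi(v)+w$ with $w\in W$ gives $g(\pi(v))=g(v)-g(w)\geqslant g(v)$, which is positive whenever $v$ is spacelike (e.g.\ $v=e_1$). Hence for spacelike $v$ no such immersion of the form $c(t)+\sum_i s_i w_i$ with constant $w_i$ exists. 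The correct fix, which is what the paper's tubular-neighborhood argument does implicitly, is to let the thickening directions vary along the curve: since $\dot c(t)$ is timelike, $\dot c(t)^{\perp}$ has signature $(p,q-1)$, so one can choose smoothly in $t$ (the interval being contractible) a negative definite $(q-1)$-plane $W(t)\subset\dot c(t)^{\perp}$ with frame $w_1(t),\dots,w_{q-1}(t)$, and set $f(t,s)=c(t)+\sum_i s_i\,w_i(t)$; at $s=0$ the induced metric is negative definite, so by compactness of the parameter interval it remains so for $|s|$ small, giving the desired timelike $q$-dimensional immersion containing $0$ and $v$.
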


\begin{proof}
    Let~$c \colon I \rightarrow \R^{p,q}$ be a smooth timelike path from a segment~$I$ to~$\R^{p,q}$ 
    containing both~$v$ and the origin in its interior. This path can be constructed in the same manner as before. Since~$I$ is compact, 
    it is possible to consider a tubular neighborhood~$\overline{c} \colon \Omega \rightarrow \R^{p,q}$, 
    with~$\Omega = \mathring{I} \times D^{p}(0, r) \times D^{q-1}(0,r)$,~$\overline{c}|_{\mathring{I}} = c|{\mathring{I}}$ and~$\overline{c}$ an immersion. 
    The space~$\Omega$ can be endowed with a pseudo-Riemannian metric of signature~$(p,q)$ via the pullback of~$g$ by~$\overline{c}$. 
    We now only have to take~$N_0 = \mathring{I} \times D^{q-1}(0,r)$. We have thickened~$c$ into a timelike immersion~$\overline{c}$ of dimension~$q$ whose image 
    contains the image of~$c$, and thus both the vector~$v$ and the origin (see picture \ref{causalpathtick}).
\end{proof}

\begin{figure}[hbt]
    \begin{center}
      \includegraphics[width=.7\linewidth]{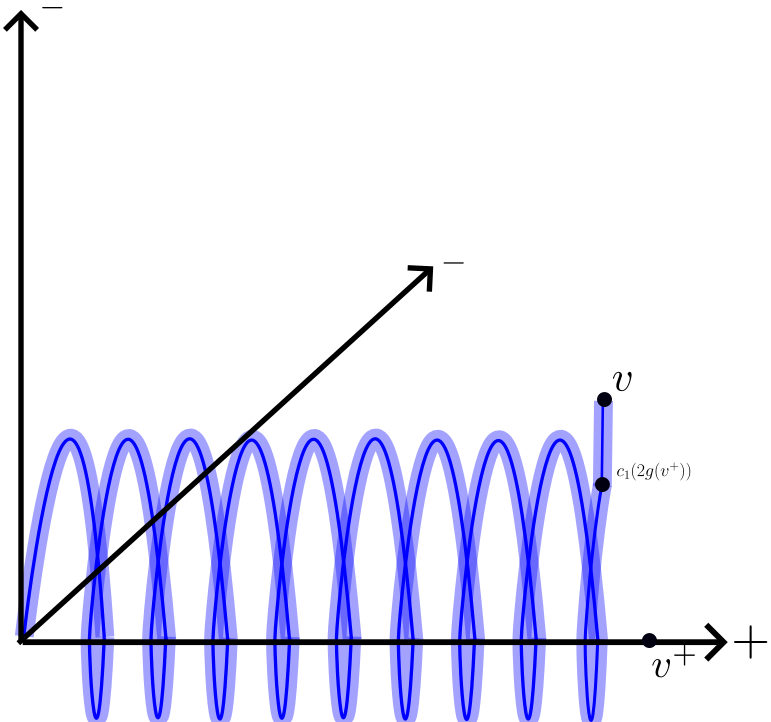}
      \end{center}
      \caption[]{The same curve, thickened into a timelike immersion of dimension~$2$.}
    \end{figure}\label{causalpathtick}

\noindent The property we need in order to get an interesting notion of causality 
in~$\R^{p,q}$ is that of \emph{inextendability} for causal immersions of dimension $q$. We want our notion of inextendability to generalize the one for paths. Intuitively, we want for the immersion~$f$ to be "without boundary" within~$M$. Being proper does not satisfy our needs as we would like the path~$\R \rightarrow \s^1$ to be inextendible. Only taking the boundary of the image of~$f$ also does not work ; One would like for the endpoints to be accounted for in the boundary of~$f$. 

\begin{comment}

\begin{defn}
    Let~$M$ be a~$(p,q)$-spacetime. A continuous map~$f \colon N_0 \rightarrow M$ from a~$k$-dimensional smooth manifold~$N_0$ 
    is said to be \emph{inextendible} if for each continuous path~$c \colon I \rightarrow M$ which 
    is inextendible in~$N_0$,~$f \circ c$ is inextendible in~$M$.
\end{defn}

\end{comment}

\begin{defn}
    Let~$M$ be a~$(p,q)$-spacetime. A continuous map~$f \colon N_0 \rightarrow M$ from a~$k$-dimensional manifold~$N_0$ 
    is said to be \emph{inextendible} if for each continuous path~$c \colon I \rightarrow M$ which 
    is inextendible in~$N_0$,~$f \circ c$ is inextendible in~$M$.
\end{defn}

\noindent Note that any path which is not inextendible can always be extended into an inextendible 
path. That is not the case for immersions of higher dimensional manifolds.

\begin{defn}
    Let~$x \in \R^{p,q}$. We will denote by~$I(x)$ (resp.~$J(x)$) the set 
    of points~$y \in \R^{p,q}$ for which there exists an inextendible timelike (resp. causal) immersion 
    of dimension~$q$ containing both~$x$ and~$y$. When~$y \in I(x)$ (resp.~$y \in J(x)$), the set~$\{x,y\}$ will be said 
    to be in \emph{timelike position} (resp. \emph{causal position}). More generally, when a set~$E \subset \R^{p,q}$ is 
    included in the image of an inextendible timelike (resp. causal) immersion of dimension~$q$, the set~$E$ will 
    be said to be in timelike position (resp. causal position), or simply that~$E$ is timelike (resp. causal).
\end{defn}

\noindent In order to show that this notion of causality in non-trivial, let us study the inextendible timelike immersions of dimension~$q$ in~$\R^{p,q}$.
Let~$T \colon \R^{p,q} \rightarrow \R^q$ be the projection on the negative coordinates of the pseudo-Euclidian space.

\begin{prop}\label{EuclidianGH}
    Let~$f \colon N_0 \rightarrow \R^{p,q}$ be an inextendible causal immersion of dimension $q$. Then 
    the map~$T \circ f \colon N_0 \rightarrow \R^q$ is a diffeomorphism. In particular,~$f$ can be seen as the graph of a~$1$-Lipschitz map 
    from~$\R^q$ to~$\R^p$.
\end{prop}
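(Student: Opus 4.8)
The plan is to write $f=(F,h):N_0\to\R^p\times\R^q=\R^{p,q}$, so that $T\circ f=h$, and to first record the easy linear-algebraic facts. Since $f$ is timelike, the pullback quadratic form $f^{*}g=\sum_i(dF_i)^2-\sum_j(dh_j)^2$ is negative definite on $N_0$; in particular $f$ is an immersion, and for every $x\in N_0$ and every nonzero $v\in T_xN_0$ one has $|dF_x(v)|<|dh_x(v)|$, so $dh_x$ is injective. As $\dim N_0=q=\dim\R^q$, each $dh_x$ is an isomorphism, hence $h=T\circ f$ is a local diffeomorphism, in particular an open map; we may assume $N_0$ connected. Granting that $h$ is moreover bijective, it is a diffeomorphism, and writing $f\circ h^{-1}(y)=\big((F\circ h^{-1})(y),\,y\big)$ exhibits $f$ as the graph of $G:=F\circ h^{-1}:\R^q\to\R^p$; since $(h^{-1})^{*}(f^{*}g)=\sum(dG_i)^2-\sum dy_j^2$ is negative definite, $\|dG_y\|\leqslant 1$ for all $y$, i.e. $G$ is $1$-contracting. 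So everything reduces to the bijectivity of $h$.

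\textbf{Surjectivity.} Fix $x_0\in N_0$, put $y_0=h(x_0)$, and take any $y_1\in\R^q$ with $y_1\neq y_0$. Consider the whole affine line $\ell:\R\to\R^q$, $\ell(t)=y_0+t(y_1-y_0)$, and let $c:(\alpha,\beta)\to N_0$ be the maximal lift of $\ell$ through $h$ with $c(0)=x_0$; it exists, is unique, and is smooth because $h$ is a local diffeomorphism. The crucial observation is that $c$ is \emph{inextendible in $N_0$}: if $\beta<+\infty$ and $c(t)\to x^{*}\in N_0$ as $t\to\beta^{-}$, then $h(x^{*})=\ell(\beta)$ and the local inverse of $h$ at $x^{*}$ would prolong the lift beyond $\beta$, contradicting maximality; and if $\beta=+\infty$ then $h\circ c=\ell$ is unbounded, so $c$ cannot converge — and the same holds at $\alpha$. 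By the hypothesis that $f$ is inextendible, $f\circ c$ is therefore inextendible in $\R^{p,q}$. Suppose now $\beta\leqslant 1$. Since $dh(\dot c)=\dot\ell$, one has $\big|\tfrac{d}{dt}(f\circ c)\big|^{2}=|dF(\dot c)|^{2}+|dh(\dot c)|^{2}<2|\dot\ell|^{2}=2|y_1-y_0|^{2}$ for the Euclidean norm of $\R^{p+q}$, so $f\circ c$ has finite Euclidean length on $[0,\beta)$ and hence converges in $\R^{p+q}$ as $t\to\beta^{-}$ — contradicting the inextendibility of $f\circ c$. Thus $\beta>1$, and $y_1=\ell(1)=h(c(1))\in h(N_0)$. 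Since $y_1$ was arbitrary, $h$ is onto $\R^q$.

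\textbf{Injectivity.} Running the same argument with $\ell$ replaced by a proper (eventually linear) smooth extension to $\R$ of an arbitrary smooth path in $\R^q$ shows that $h$ has the path-lifting property, and lifts are unique because $h$ is a local homeomorphism. Now let $x_0,x_1\in N_0$ with $h(x_0)=h(x_1)$ and choose a path $\sigma$ in $N_0$ from $x_0$ to $x_1$; then $h\circ\sigma$ is a loop in $\R^q$, hence null-homotopic rel endpoints. Lifting this null-homotopy starting from $x_0$ (the lifted homotopy is continuous by the standard argument using path lifting, uniqueness and compactness of $[0,1]^2$) yields a homotopy $\tilde K$ in $N_0$ whose side $\tilde K(\cdot,0)$ is the lift of $h\circ\sigma$ at $x_0$, namely $\sigma$ itself, whose other sides are lifts of constant paths, hence constant, forcing $x_1=\sigma(1)=\tilde K(1,0)=\tilde K(1,1)=x_0$. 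Therefore $h$ is a bijective local diffeomorphism, i.e. a diffeomorphism, and the graph description from the first paragraph applies. (Equivalently: $h$ is a covering map of the simply connected base $\R^q$ with connected total space.)

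\textbf{Main obstacle.} The one genuinely delicate step is the surjectivity/path-lifting argument: one must set up the lifted curve (a straight line, or a proper extension of a given path) so that its \emph{maximal} domain produces a path that is honestly inextendible in $N_0$ — only then is the hypothesis on $f$ available — and then convert the timelike condition into the Euclidean length bound that forces $f\circ c$ to converge, yielding the contradiction. The rest is linear algebra together with routine covering-space theory.
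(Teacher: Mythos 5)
Your proof is correct and follows essentially the same route as the paper: establish that $T\circ f$ is a local diffeomorphism from the timelike (derivative/distance-increasing) condition, use the inextendibility hypothesis together with the Euclidean length bound $\overline{g}\leqslant 2\,g_{\R^q}$ on timelike directions and completeness of $\R^{p+q}$ to obtain surjectivity and the path-lifting (covering) property, and conclude by simple connectedness of $\R^q$; the graph/$1$-contracting statement then follows identically. The only cosmetic slip is writing $\|dG_y\|\leqslant 1$ where the negative-definiteness actually gives the strict inequality $\|dG_y\|<1$ needed for ``$1$-contracting.''
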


\begin{figure}[h!bt]
    \begin{center}
      \includegraphics[width=.7\linewidth]{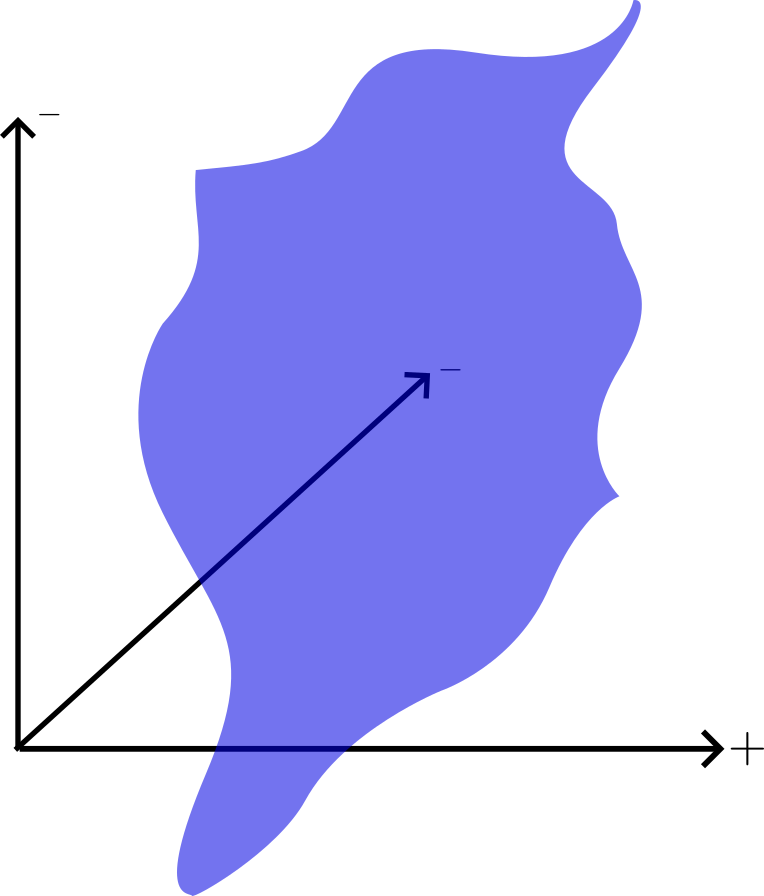}
      \end{center}
      \caption[]{An inextendible timelike immersion of dimension~$2$ in~$\R^{1,2}$.}
    \end{figure}

\begin{proof}
    This proof adapts an argument by Mess in \cite{mess2007lorentz}. Let us first show that the map~$T \circ f$ is a local diffeomorphism. Since the projection~$T$ removes the negative coordinates, the map~$T \circ f$ increases the distances between the manifolds~$(N_0, -f^* g_{\R^{p,q}})$ and~$(\R^q, g_{\R^q})$, i.e for each~$x \in N_0, v \in T_x N_0$, 

    \[ g_{\R^q}(d(T \circ f)(v)) \geqslant - f^* g_{\R^{p,q}}(v). \] 

    \noindent This tells us in particular that~$T \circ f$ is a local diffeomorphism ; in particular, its image~$\Imm(T \circ f)$ is open in~$\R^q$.

    Let~$c \colon [a,b] \rightarrow \R^q$ be a smooth path of speed~$1$ in $\R^q$ with $c(a)$ in $\Imm(T \circ f)$ and let us assume that $c$ may only be pulled back to a map $\tilde{c} : [a,c) \subset [a,b] \rightarrow N_0$ on $N_0$ via $T \circ f$. We will denote by~$\overline{g} = g_{\R^p} + g_{\R^q}$ the flat Riemannian metric on~$\R^{p,q} = \R^{p+q}$. For each~$x \in \R^{p,q}$ and~$v \in T_x \R^{p,q}$ timelike, we have that 

    \[\begin{split}
        \overline{g}(v) &= g_{\R^q}(v) + g_{\R^q}(v) \\ 
        &\leqslant 2 g_{\R^q}(v), 
    \end{split} \] 

    \noindent since~$g_{\R^{p,q}}(v) \leqslant 0$. In particular, since $T \circ f \circ \tilde{c} = c$, we have that the speed of~$f(\tilde{c})$ for~$\overline{g}$ is lower than~$2$. Since~$\R^{p,q}$ is complete,~$f(\tilde{c})$ must converge, and so does $\tilde{c}$ since~$f$ is inextendible. However we have assumed that $\tilde{c}$ could not be extended further, hence the contradiction. This proves that any smooth path of $\R^q$ starting in $\Imm(T \circ f)$ may be completely pulled back into a path of $N_0$, thus $T \circ f$ is surjective and is a covering of $\R^q$.

     Finally, since~$\R^q$ is simply connected, the map~$T \circ f$ must be a diffeomorphism from~$N_0$ to~$\R^q$, hence the result. In particular, the image of $f$ is the graph of a map from $\R^q$ to $\R^p$, and one may easily see that such graphs are causal if and only if the map is $1$-Lipschitz, hence the result.
\end{proof}

\begin{comment}

\begin{prop}
    Let~$f \colon N_0 \rightarrow \R^{p,q}$ be an inextendible causal immersion. Then 
    the map~$T \circ f \colon N_0 \rightarrow \R^q$ is a diffeomorphism. In particular,~$f$ can be seen as the graph of a~$1$-Lipschitz map from~$\R^q$ to~$\R^p$ which is~$1$-contracting if and only if~$f$ is timelike.
\end{prop}

\begin{proof}
    Let~$g_-$ be the constant~$(p,q)$ metric on~$\R^{p+q}$ defined by

    \[g_- = dx_1^2 + \cdots dx_p^2 - 2dy_1^2 - \cdots - 2dy_q^2.\] 

    \noindent It is clear that~$g_- < g$. In particular, every causal immersion for~$g$ is a timelike immersion for~$g_-$. Thus~$f \colon N_0 \rightarrow \R^{p+q}$ is an inextendible timelike immersion for~$(\R^{p+q}, g_-)$. With the same reasoning as before, we can show that the projection~$T \colon \R^{p,q} \rightarrow \R^q$ verifies that~$T \circ f$ is a diffeomorphism between~$N_0$ and~$\R^q$, hence the first part of the result. The map~$f$ can then be seen as the graph of a smooth map from~$\R^q$ to~$\R^p$, and it is clear that this map is~$1$-Lipschitz if and only~$f$ is causal and~$1$-contracting if and only if~$f$ is timelike.
\end{proof}

\end{comment}

\noindent In particular, this makes it easy to realize that the causality relation between two points 
coincides with the nature of the segment joining them.

\begin{prop}\label{flatgeodesics}
    Let~$x,y$ be two points in~$\R^{p,q}$. The set~$\{x,y\}$ is in timelike position (resp. causal position) if and only if the segment~$[x,y]$ is timelike (resp. causal).
\end{prop}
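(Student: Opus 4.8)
The plan is to read off both implications from the description of inextendible causal immersions as graphs of Lipschitz maps, obtained in Proposition \ref{EuclidianGH} and the proposition following it; only the converse implication will need an additional explicit construction. Throughout, I may assume $x \neq y$, the case $x = y$ being vacuous.

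First I would show that timelike (resp. causal) position of $\{x,y\}$ forces the segment $[x,y]$ to be timelike (resp. causal). Let $f : N_0 \rightarrow \R^{p,q}$ be an inextendible timelike immersion whose image contains $x$ and $y$. By Proposition \ref{EuclidianGH}, $T \circ f$ is a diffeomorphism onto $\R^q$, so, identifying $N_0$ with $\R^q$ through it, $f$ is the graph $w \mapsto (h(w), w)$ of a $1$-contracting map $h : \R^q \rightarrow \R^p$. Writing $x = f(w_x)$ and $y = f(w_y)$ one has $w_x = T(x)$, $w_y = T(y)$, and $w_x \neq w_y$ by injectivity of $f$; hence $y - x = (h(w_y) - h(w_x),\, w_y - w_x)$ and
\[ g_{p,q}(y-x) = |h(w_y) - h(w_x)|^2 - |w_y - w_x|^2 < 0, \]
since $h$ strictly contracts distances along $[w_x, w_y]$. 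As $[x,y]$ has constant velocity $y-x$, it is timelike. The causal case is the same computation run through the causal analogue of Proposition \ref{EuclidianGH}: there $h$ is only $1$-Lipschitz, and the inequality weakens to $\leqslant 0$, so $[x,y]$ is causal.

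For the converse I would exhibit an affine $q$-plane through $x$ and $y$ that is timelike (resp. causal) and inextendible. After translating so that $x = 0$, the hypothesis says $v := y$ satisfies $g_{p,q}(v) < 0$ (resp. $\leqslant 0$); writing $v = (v^+, v^-)$ with $v^+$ in the positive $\R^p$-factor and $v^- = T(v)$ in the negative $\R^q$-factor, this is $|v^+| < |v^-|$ (resp. $|v^+| \leqslant |v^-|$), and in particular $v^- \neq 0$. I would then take the rank-one linear map $L : \R^q \rightarrow \R^p$ given by $L(w) = \langle w, v^- \rangle\, |v^-|^{-2}\, v^+$, which satisfies $L(v^-) = v^+$ and has operator norm $|v^+|/|v^-|$, hence $< 1$ (resp. $\leqslant 1$), together with the affine embedding $F : \R^q \rightarrow \R^{p,q}$, $F(w) = x + (L(w), w)$. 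Its image is a $q$-plane, timelike (resp. causal) because $L$ is $1$-contracting (resp. $1$-Lipschitz), it contains $x = F(0)$ and $y = F(v^-)$, and it is inextendible because the last $q$ coordinates of $F \circ c$ equal $c$, so $F \circ c$ converges at an endpoint only when $c$ does. Thus $\{x,y\}$ is in timelike (resp. causal) position.

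I do not expect a real obstacle here: the substance is already contained in the two preceding propositions, and the only genuinely new point, the inextendibility of the affine plane in the converse, reduces to the elementary observation that the graph of any continuous map $\R^q \rightarrow \R^p$ is inextendible.
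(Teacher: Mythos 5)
Your proposal is correct and follows essentially the same route as the paper: one direction reads the conclusion off the graph-of-a-$1$-Lipschitz (resp.\ $1$-contracting) description of inextendible causal (resp.\ timelike) immersions from Proposition \ref{EuclidianGH}, and the other produces an affine timelike (resp.\ causal) $q$-plane through $x$ and $y$, which is exactly the paper's ``$x+V$'' construction made explicit via your rank-one map $L$. The only additions are presentational: you spell out the norm computation and verify inextendibility of the affine graph, which the paper leaves implicit.
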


\begin{proof}
    Assume that~$[x,y]$ is timelike (resp. causal) and include the vector~$y-x$ into a vector space~$V$ of dimension~$q$ such that~$g|_V < 0$ (resp.~$g|_V \leqslant 0$). 
    The space~$x + V$ is an inextendible timelike (resp. causal) immersion.

    \indent Conversely, assume that there exists an inextendible causal immersion~$f$ of dimension $q$ containing both~$x$ and~$y$. This immersion can be seen as the graph of a~$1$-Lipschitz map~$h$ from~$\R^q$ to~$\R^p$. In particular, if~$x = (h(x_0), x_0)$ and~$y = (h(y_0), y_0)$, we have that
    
    \[ d(h(x_0), h(y_0)) \leqslant d(x_0, y_0),\]

    \noindent thus~$[x,y]$ is a causal segment (see lemma \ref{lemmacausalgeodesic}). When the immersion is timelike, the map~$h$ is~$1$-contracting and~$[x,y]$ is timelike, hence the result.
\end{proof}

\noindent We can actually characterise the subsets of~$\R^{p,q}$ which are in causal position.

\begin{prop}\label{timeposition}
    Let~$E$ be a subset of~$\R^{p,q}$. The set~$E$ is in causal position if and only if 
    for each~$x,y \in E$, the segment~$[x,y]$ is in causal position.
\end{prop}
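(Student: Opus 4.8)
The plan is to reduce the statement about an arbitrary subset $E$ in causal position to the characterization of inextendible causal immersions already obtained (the causal analogue of Proposition \ref{EuclidianGH}): every inextendible causal immersion in $\R^{p,q}$ is the graph of a $1$-Lipschitz map $g : \R^q \to \R^p$, i.e.\ $f(x) = (g(x), x)$ under the splitting $\R^{p,q} = \R^p \oplus \R^q$. The forward direction is immediate: if $E$ lies in the image of such an immersion, then so does every pair $\{x,y\} \subset E$, and by Proposition \ref{flatgeodesics} this means $[x,y]$ is causal, hence $\{x,y\}$ is in causal position. So the content is the converse.

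First I would set up coordinates: write each point of $E$ as $(a_e, e)$ where $e = T(e) \in \R^q$ and $a_e \in \R^p$. The hypothesis that every pair $\{x,y\}$ is in causal position says, via Proposition \ref{flatgeodesics}, that $[x,y]$ is a causal segment, i.e.\ $|a_x - a_y|_{\R^p} \leqslant |T(x) - T(y)|_{\R^q}$ for all $x,y \in E$. In particular $T|_E$ is injective (if $T(x) = T(y)$ then $a_x = a_y$), so $E$ is the graph over $T(E) \subset \R^q$ of a well-defined map $g_0 : T(E) \to \R^p$, and the inequality above says exactly that $g_0$ is $1$-Lipschitz on $T(E)$. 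The goal is then to extend $g_0$ to a $1$-Lipschitz map $g : \R^q \to \R^p$; its graph will be an inextendible causal immersion (inextendibility and causality being guaranteed by the graph structure exactly as in the proof of Proposition \ref{EuclidianGH}) containing $E$.

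The key step is therefore the extension of a $1$-Lipschitz map defined on an arbitrary subset of $\R^q$ to a $1$-Lipschitz map on all of $\R^q$, with values in $\R^p$. This is precisely Kirszbraun's theorem (extension of Lipschitz maps between Euclidean, or more generally Hilbert, spaces preserving the Lipschitz constant); I would invoke it directly. One subtlety to address: the causal-graph immersion one builds this way is causal but perhaps not timelike even where $E$ itself was in "strict" position; but the statement only asks for causal position, so a $1$-Lipschitz (not necessarily strictly contracting) extension suffices, and the graph of any smooth $1$-Lipschitz map is a causal immersion with $T \circ f$ a diffeomorphism onto $\R^q$, hence inextendible. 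A minor technical point is smoothness: Kirszbraun gives only a Lipschitz extension, so I would either mollify it slightly away from $E$ — a standard smoothing argument that keeps the Lipschitz constant $\leqslant 1$ up to shrinking, or rather keeps it $\leqslant 1$ exactly if one first extends with constant $< 1$ on the complement of a neighborhood of $E$ — or simply remark that the definition of causal immersion in this paper has already been extended to the continuous setting, so a $1$-Lipschitz graph is admissible as is.

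The main obstacle, such as it is, is purely one of citation and bookkeeping: making sure Kirszbraun's theorem is applicable (it is, since both source and target are Euclidean spaces) and checking that the resulting graph genuinely satisfies the paper's definition of an inextendible causal immersion — but this last point is already done verbatim in the proof of the causal version of Proposition \ref{EuclidianGH}, so it can be quoted rather than repeated.
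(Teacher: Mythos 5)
Your proposal is correct and follows essentially the same route as the paper: the forward direction via Proposition \ref{flatgeodesics}, and the converse by viewing $E$ as the graph of a $1$-Lipschitz map on $T(E)$ and extending it to all of $\R^q$ by Kirszbraun's theorem. Your additional remark on regularity (mollifying, or invoking the paper's continuous notion of causal map) addresses a point the paper's proof passes over silently, but it does not change the argument.
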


\begin{proof}
    Assume that~$E$ is in causal position. Then for each~$x,y$ in~$E$, the set~$\{x,y\}$ is in causal position, meaning that the segment~$[x,y]$ is causal according to proposition \ref{flatgeodesics}.

    Conversely, if~$E$ verifies that for each~$x,y \in E$,~$[x,y]$ is causal, then~$E$ is the graph 
    of a~$1$-Lipschitz map from a subset~$T(E)$ of~$\R^q$ to~$\R^p$. This map can then be extended into a 
   ~$1$-Lipschitz map defined on the whole of~$\R^q$ by Kirszbraun's Theorem (see \cite{schwartz1969nonlinear}, p. 21). Then, by definition,~$E$ is in causal position.
\end{proof}

\begin{rem}\label{lipschitzextension}
    We may wish to also characterise the sets~$E$ of~$\R^{p,q}$ which are in timelike position; such a set must be the graph of a ~$1$-contracting map~$g \colon T(E) \rightarrow \R^p$, i.e such that~$d(g(x), g(y)) < d(x,y)$. However Kirszbraun's Theorem does not guarantee the existence of a ~$1$-contracting extension. The proof only works if~$E$ is the graph of a~$k$-Lipschitz map with~$k < 1$.
\end{rem}

\noindent We will come back for a more in-depth study of the pseudo-Euclidian space 
later on.

\subsection{Local structure of causal immersions}

We can extend the notion of causality we introduced on~$\R^{p,q}$ to any~$(p,q)$-spacetime 
with the same definitions.

\begin{defn}\label{causalset}
    A set~$E$ of~$M$ is said to be in timelike position (resp. causal position) when there 
    exists an inextendible timelike (resp. causal) immersion of dimension $q$ containing~$E$ in its image.
\end{defn}

\noindent This notion only depends on the conformal class of the metric. A first fact we can observe is that depending on the spacetime~$M$, it is possible 
that there does not exist any timelike or causal immersion which is inextendible in the sense 
we defined as soon as~$q \geqslant 2$. We give examples of such spacetimes in \ref{mainexample}.

\begin{defn}
    A~$(p,q)$-spacetime~$M$ is said to have non-trivial causality if for each point~$x$ in~$M$, the set~$\{x\}$ is in timelike position, meaning there always exists an inextendible timelike immersion of dimension~$q$ containing~$x$.
\end{defn}

\noindent Please note that in the Lorentzian case, every spacetime 
has non-trivial causality. In Lorentzian geometry, it is a well known fact that the local causality in a normal neighborhood of a point~$x$ 
is entirely determined by the sign of the geodesics originating from~$x$. We are going to show a similar result in higher signatures.

\begin{defn}
Let $g_1$, $g_2$ be two pseudo-Riemannian metrics on a manifold $M$. We will say that $g_1 \leqslant g_2$ (resp. $g_1 < g_2$) if for each $x \in M$ and each $v \in T_x M$, if $v$ is causal for $g_2$, then it is causal (resp. timelike) for $g_1$.
\end{defn}

\begin{lemma}
    Let~$x \in M$. There exist an exponential neighborhood~$\exp : U \subset T_x M \rightarrow M$ centered on $x$ and two pseudo-Riemannian metrics~$g_-$ and~$g_+$ constant on~$U$ such that~$g_- < \exp^* g < g_+$. 
\end{lemma}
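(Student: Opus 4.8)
The idea is to pull the metric back to $T_x M$ through the exponential map, where at the origin it becomes the constant quadratic form $g_x$, and then to exploit the fact that the order $<$ between metrics is an open condition: it suffices to fatten and to thin the causal cone of $g_x$ by a fixed amount and afterwards shrink the neighborhood.

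First I would fix a linear identification $T_x M \simeq \R^{p+q}$ carrying $g_x$ to the standard form $q_0 = du_1^2 + \dots + du_p^2 - dv_1^2 - \dots - dv_q^2$, together with the corresponding Euclidean norm $\lvert \cdot \rvert$ on $T_x M$. Pick any normal neighborhood $U_0 \subset T_x M$ of $0$ and pull $g$ back to $U_0$ via $\exp_x$; since $d(\exp_x)_0 = \mathrm{id}$, this yields a smooth family of quadratic forms $(h_w)_{w \in U_0}$ on $T_x M$ with $h_0 = q_0$. Put $\eta(w) = \sup_{\lvert \xi \rvert = 1} \lvert h_w(\xi) - q_0(\xi) \rvert$, a continuous function of $w$ with $\eta(0) = 0$ (a supremum over the compact unit sphere of a quantity depending continuously on $w$).

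Next, fix $t \in (0,1)$ and set $g_+ = (1+t)(du_1^2 + \dots + du_p^2) - (1-t)(dv_1^2 + \dots + dv_q^2)$ and $g_- = (1-t)(du_1^2 + \dots + du_p^2) - (1+t)(dv_1^2 + \dots + dv_q^2)$, both constant of signature $(p,q)$. Writing $\xi = (u,v)$ with $\lvert u \rvert^2 + \lvert v \rvert^2 = 1$, two elementary computations on the unit sphere give the key estimates: if $\xi$ is $g_+$-causal then $(1+t)\lvert u\rvert^2 \leqslant (1-t)\lvert v\rvert^2$ forces $\lvert u\rvert^2 \leqslant (1-t)/2$, hence $q_0(\xi) = \lvert u\rvert^2 - \lvert v\rvert^2 \leqslant -t$; and if $\xi$ is $h_w$-causal then $q_0(\xi) \leqslant h_w(\xi) + \eta(w) \leqslant \eta(w)$, so $\lvert u\rvert^2 \leqslant (1+\eta(w))/2$ and $g_-(\xi) = (1-t)\lvert u\rvert^2 - (1+t)\lvert v\rvert^2 \leqslant \eta(w) - t$. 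Now let $U = \{w \in U_0 : \eta(w) < t\}$, an open --- hence normal --- neighborhood of $0$. For $w \in U$ and any nonzero $g_+$-causal $\xi$, normalizing to $\lvert \xi \rvert = 1$ gives $h_w(\xi) \leqslant q_0(\xi) + \eta(w) \leqslant -t + \eta(w) < 0$, so $\xi$ is $h_w$-timelike; and for any nonzero $h_w$-causal $\xi$, normalizing gives $g_-(\xi) \leqslant \eta(w) - t < 0$, so $\xi$ is $g_-$-timelike. Identifying $U$ with $\exp_x(U) \subset M$ so that $h$ is $g$ restricted to $U$, this says precisely $g_- < g < g_+$ on $U$.

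Every computation here is one or two lines; the only point that needs care is that the perturbation $h_w - q_0$ must be controlled uniformly in the direction $\xi$, which is what lets a single $t$ work throughout a neighborhood --- and this is exactly the compactness of the unit sphere together with continuity of $w \mapsto h_w$. It is also worth recording, as in the remark following the definition of $\leqslant$, that $<$ depends only on the conformal classes involved, so the particular normalizations chosen for $g_\pm$ are immaterial.
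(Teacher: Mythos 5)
Your proof is correct and follows essentially the same route as the paper: fix normal coordinates in which $g_x$ is the standard form, take constant metrics whose causal cones are strictly wider and strictly narrower than that of $g_x$, and use continuity of $g$ to shrink the neighborhood so that the strict inequalities persist. The only difference is that you make explicit (via compactness of the unit sphere and the modulus $\eta$) the continuity step that the paper disposes of in one sentence, and you normalize $g_\pm$ by scaling both the positive and negative parts rather than only the negative ones — a cosmetic variation.
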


\begin{proof}
    Let $\exp : V \subset T_x M \rightarrow M$ be an exponential neighborhood centered on $c$. Let us write $(\exp^* g)|_0 = g_x = x_1^2 + \cdots + x_p^2 - y_1^2 - \cdots - y_q^2$ in a good basis of $T_x M$ and let~$g_-$ and~$g_+$ be the constant metrics on~$V$ defined by

    \[g_- = x_1^2 + \cdots + x_p^2 - (1+\varepsilon)y_1^2 - \cdots - (1+\varepsilon)y_q^2,\]

    \[g_+ = x_1^2 + \cdots + x_p^2 - (1-\varepsilon) y_1^2 - \cdots - (1-\varepsilon) y_q^2,\]

    \noindent with $\varepsilon > 0$. Since~$\exp^* g$ is smooth on~$V$, we know that there exists a neighborhood~$U$ of~$0$ in~$V$ such that in~$U$, we have 

    \[ g_- < \exp^* g < g_+,\]

    \noindent hence the result. Taking $\varepsilon$ to be near zero gives finer approximations of $\exp^* g$ but reduces the size of the neighborhood $U$.
\end{proof}

\begin{defn}
    We will take~$U$ to be of the shape~$C(a,b) = (-a,a)^p \times \D^q(0, b)$ with~$a \gg b$ after chosing an orthonormal basis of $T_x M$. We will call such a neighborhood a \emph{cylindrical neighborhood}.
\end{defn}

\begin{lemma}\label{lemmalorentzian}
    Let~$U = C(a,b) \subset T_x M$ be a cylindrical neighborhood of~$x$ with constant metrics~$g_-$ and~$g_+$ and let~$v \in U$ such that~$g_-(v) < 0$. Let~$F$ be a~$p$-dimensional vector space in~$U$ which is positive for~$g_-$ and orthogonal to~$v$ for~$g_-$. Let~$L = F \oplus v$. The space~$L \cap U$ is a Lorentzian manifold in~$(U,\exp^* g)$.
\end{lemma}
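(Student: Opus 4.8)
The plan is to reduce the statement to a pointwise linear‑algebra fact. Since $L=F\oplus\R v$ is a fixed linear subspace of $T_xM$ through the origin while the metric varies over $U$, saying that $(L\cap U,g)$ is a Lorentzian manifold means exactly that for every $w\in L\cap U$ the quadratic form $g_w$, restricted to the tangent space $L$ of $L\cap U$ at $w$, has signature $(p,1)$. So I would fix such a $w$ once and for all, abbreviate $h:=g_w|_L$, and use throughout that $g_-<g_w<g_+$ on $U$ (from the preceding lemma) together with the explicit shapes $g_-=\sum_{i=1}^p x_i^2-2\sum_{j=1}^q y_j^2$ and $g_+=\sum_{i=1}^p x_i^2-\tfrac12\sum_{j=1}^q y_j^2$ coming from that lemma's proof.

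First I would check that $h$ is positive definite on $F$: if some $f\in F\setminus\{0\}$ were causal for $g_w$, then the relation $g_-<g_w$ would force $f$ to be timelike for $g_-$, i.e.\ $g_-(f)<0$, contradicting $g_-|_F>0$. Hence $g_w(f)>0$ for every nonzero $f\in F$, and since $\dim F=p$ this already shows that the positive index of $h$ is at least $p$.

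Next I would exhibit a $g_w$‑timelike vector inside $L$. Let $\pi\colon\R^{p+q}\to\R^p$ denote the projection onto the first (positive) coordinate block. The hypothesis $g_-|_F>0$ forces $\pi(f)\neq0$ for every nonzero $f\in F$ (otherwise $g_-(f)=-2\sum_j y_j(f)^2\le0$), so $\pi$ restricts to an injective, hence (by equality of dimensions) bijective, linear map $F\to\R^p$. Let $f_0\in F$ be the unique vector with $\pi(f_0)=\pi(v)$, and set $u_0:=v-f_0\in L$. Since $g_-(v)<0$ while $g_-\ge0$ on $F$, we have $v\notin F$, so $u_0\neq0$; and $\pi(u_0)=0$, so $g_+(u_0)=-\tfrac12\sum_j y_j(u_0)^2$, which is strictly negative because $u_0\neq0$ has vanishing positive part. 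Thus $u_0$ is timelike, a fortiori causal, for $g_+$, and $g_w<g_+$ then yields $g_w(u_0)<0$. Hence the negative index of $h$ is at least $1$.

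Finally, $\dim L=\dim F+1=p+1$, whereas the positive index of $h$ is at least $p$ and the negative index at least $1$; these are compatible only if the nullity is $0$ and the signature is exactly $(p,1)$, so $g_w|_L$ is Lorentzian. As $w\in L\cap U$ was arbitrary, $(L\cap U,g)$ is a Lorentzian manifold. I do not anticipate a real obstacle: the delicate points are keeping the directions of the inequalities $g_-<g_w<g_+$ straight and verifying $u_0\neq0$, the latter being precisely where the hypothesis $g_-(v)<0$ is used. (The orthogonality hypothesis $F\perp_{g_-}v$, which the argument above does not need, rather ensures that $g_-|_L$ is itself Lorentzian, with $\R v$ as its timelike line.)
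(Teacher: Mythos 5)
Your proof is correct and follows essentially the same strategy as the paper: use $g_- < g$ to get positive definiteness of $g$ on $F$, produce a $g$-timelike vector in $L$ via comparison with $g_+$, and conclude by counting indices on the $(p+1)$-dimensional space $L$. The only real difference is that the paper obtains the timelike vector abstractly --- since $g_+$ has signature $(p,q)$ it cannot be positive definite on a $(p+1)$-dimensional subspace, so some nonzero $w \in L$ has $g_+(w) \leqslant 0$ and hence $g(w) < 0$ by $g < g_+$ --- whereas you build it explicitly from the coordinate expressions of $g_\pm$; your closing observation that the $g_-$-orthogonality of $F$ and $v$ is not needed for this signature statement is likewise consistent with the paper's argument.
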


\begin{proof}
    Since~$g_-(v) < 0$,~$g_-$ is of signature~$(p,1)$ on~$L$.
    Furthermore~$\dim(L) = p+1$, thus~$g_+$ cannot be definite positive on~$L$, meaning that there exists a non-zero~$w$ such that~$g_+(w) \leqslant 0$. Let~$y \in L$. Since~$g_- < g$,~$g_L = g|_L$ must be 
    positive on~$F \subset T_y L$, meaning that~$g_L|_y$ is either of signature~$(p,1)$ or degenerate. However, since~$g < g_+$ we must have~$g_L(w) < 0$, and~$g_L$ must be of signature~$(p,1)$.
\end{proof}

\begin{prop}\label{geodesics}
    Let~$x$ be a point in a spacetime~$M$. There exists a normal neighborhood~$U \subset T_x M$ such that if there exists a timelike immersion of dimension $q$~$f \colon N_0 \rightarrow U$ inextendible in~$U$ containing both~$0$ and~$v \in V$, then the segment~$[0, v]$ in $(T_x M, g_x)$ is timelike.
\end{prop}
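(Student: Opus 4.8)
The plan is to localize the statement inside a cylindrical normal neighborhood, derive the graph structure of the immersion by the argument of Proposition~\ref{EuclidianGH} run against the constant comparison metric $g_-$, and then upgrade the resulting conclusion from $g_-$ to $g$ by means of the Lorentzian comparison of Lemma~\ref{lemmalorentzian}. First I would invoke the two preceding lemmas to fix a cylindrical normal neighborhood $U = C(a,b) = (-a,a)^p \times \D^q(0,b)$ of $x$, with $a \gg b$, carrying constant metrics with $g_- < g < g_+$, where in suitable normal coordinates $g_- = dx_1^2 + \dots + dx_p^2 - 2\,dy_1^2 - \dots - 2\,dy_q^2$. Since $g_- < g$, every $g$-timelike immersion is $g_-$-timelike, and inextendibility of $f$ in $U$ is a purely topological condition; so it is enough to prove the statement for an immersion $f$ which is $g_-$-timelike and inextendible in $U$, i.e. to work inside the flat piece $(U,g_-) \subset \R^{p,q}$.

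Let $T : U \to \D^q(0,b)$ be the projection onto the negative factor. Exactly as in Proposition~\ref{EuclidianGH}, the inequality $g_{\R^q}(d(T\circ f)(u)) \geqslant -\tfrac12 (f^*g_-)(u)$ shows that $d(T\circ f)$ is injective on the timelike tangent $q$-planes, so $T\circ f$ is a local diffeomorphism with open image. To obtain surjectivity onto $\D^q(0,b)$ and the covering property, I would lift a path $c$ of $\D^q(0,b)$ through $T\circ f$: any $g_-$-timelike lift has Euclidean speed at most $\sqrt 3$ times that of $c$, hence Euclidean length controlled by the diameter $\sim b$ of the base, so that, because $a \gg b$, the lift cannot reach the lateral faces $\partial\big((-a,a)^p\big) \times \D^q(0,b)$ before $c$ is exhausted; completeness of $\R^{p,q}$ then forces the lifted curve to converge in $U$, inextendibility of $f$ forces the curve in $N_0$ to converge, and the local diffeomorphism property allows the lift to be extended, contradicting maximality. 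As $\D^q(0,b)$ is simply connected, $T\circ f$ is a diffeomorphism, so, identifying $N_0 \simeq \D^q(0,b)$, the immersion is the graph $y \mapsto (\varphi(y),y)$ of a map $\varphi : \D^q(0,b) \to \R^p$ with $\|d\varphi\| < \sqrt 2$ (and one may replace $\sqrt 2$ by any constant $> 1$ after shrinking $U$). Writing $v = (x_0,y_0)$, we have $\varphi(0) = 0$ and $\varphi(y_0) = x_0$, so integrating the derivative bound along the segment $[0,y_0]$ gives $|x_0| < \sqrt 2\,|y_0|$, that is $g_-(v) < 0$; in particular $[0,v]$ is already $g_-$-timelike.

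To upgrade to $g$-timelikeness I would use Lemma~\ref{lemmalorentzian}: since $g_-(v) < 0$, choose a $g_-$-positive $p$-plane $F$ that is $g_-$-orthogonal to $v$, so that $L = F \oplus \langle v \rangle$ is a $(p+1)$-dimensional submanifold on which $g$ restricts to a Lorentzian metric and which contains both $0$ and $v$. Intersecting $f$ with $L$, or re-running the graph argument inside the Lorentzian manifold $(L,g|_L)$, produces a $g$-timelike curve in $L$ joining $0$ to $v$, after which Proposition~\ref{Lorentzianlocalstructure}, applied in a Lorentzian normal neighborhood of $0$ inside $L$, forces the geodesic $[0,v]$ to be timelike.

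The main obstacle is the middle step. Contrary to the situation of Proposition~\ref{EuclidianGH}, the cylinder $U$ is incomplete, so a naive lifting argument could let the lifted curve escape through the lateral faces of the cube; the condition $a \gg b$ built into the definition of a cylindrical neighborhood is exactly what keeps $g_-$-timelike lifts away from those faces long enough for completeness of $\R^{p,q}$ to take over, playing the role that completeness of the base $\R^q$ plays in the flat case. A secondary point requiring care is the final upgrade, where one must check that the curve produced inside $L$ is genuinely timelike and that $L$ inherits a normal neighborhood of $0$, so that the Lorentzian local structure result is legitimately applicable.
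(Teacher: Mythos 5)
Your proposal is correct and follows essentially the same route as the paper: compare $g$ with the constant metric $g_-$ on a cylindrical neighborhood, deduce that the inextendible timelike immersion is a graph over $\D^q(0,b)$ so that $g_-(v)<0$, then pass to the Lorentzian plane $L = F \oplus v$ of Lemma \ref{lemmalorentzian} and conclude via the Lorentzian local structure result \ref{Lorentzianlocalstructure} that the geodesic $[0,v]$ is timelike. Your explicit treatment of the incompleteness of the cylinder (the role of $a \gg b$ in the path-lifting step) is a refinement of the very step the paper handles by directly invoking the $\R^{p,q}$ argument of Proposition \ref{EuclidianGH}, not a departure from its method.
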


\begin{proof}
    Let~$U = C(a,b)$ be a cylindrical neighborhood around~$x$ defined as before and let~$f \colon N_0 \rightarrow U$ be a timelike immersion of dimension $q$ inextendible in~$U$. 
    Since~$f$ is timelike for~$g$, it must also be timelike for~$g_-$. We then know with proposition \ref{flatgeodesics} that the composition of~$f$ with the projection onto~$\D^q(0,b)$ must be a diffeomorphism.
    In other words,~$f$ can also be regarded as a map~$\overline{f} \colon \D^q(0,b) \rightarrow (-a,a)^p$ whose graph is the image of the inextendible immersion~$f$. 

    \begin{figure}[hbt]
        \begin{center}
          \includegraphics[width=.9\linewidth]{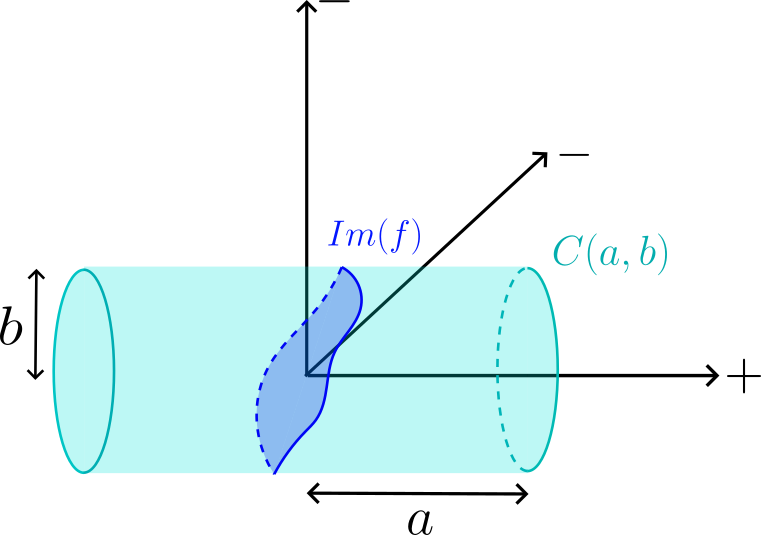}
          \end{center}
          \caption[]{An inextendible timelike immersion~$f$ inside a cylindrical neighborhood~$C(a,b)$.}
        \end{figure}

        \noindent Let~$v$ be in the image of~$f$. We are going to show that the line~$[0, v]$ is timelike. 
    What we have done before shows that~$g_-(v) < 0$. Let~$F$ be a~$p$-dimensional vector space in~$U$ which is positive for~$g_-$ (and thus for~$g_+$) and orthogonal to~$v$ 
    for~$g_-$. Let~$L = F \oplus v$. By lemma \ref{lemmalorentzian}, we know that~$L$ is a Lorentzian manifold for~$g$. 

    Since $L$ is has signature $(p,1)$ and the image of $f$ has signature $(0,q)$, the two manifolds are always transverse at their intersection which must then a one dimensional manifold. As both $L$ and the image of $f$ are closed, the intersection must also be closed, hence it is a connected path. Since it is contained in~$\Imm(f)$, this path must be timelike. However, since it is also contained 
    in the Lorentzian space~$L$ and since~$L \cap U$ is a normal neighborhood for~$L$, results of Lorentzian geometry tell us that the geodesic from the origin 
    to~$v$ in~$L$ must be timelike. Since~$L$ is a vector space in the normal neighborhood~$U$, the geodesic~$[0,v]$ in~$M$ is contained in~$L$, meaning that~$[0,v]$ must also be the geodesic in~$L$, hence the result.
\end{proof}

\noindent We can prove the same result when the immersion is causal.

\begin{prop}
     Let~$x$ be a point in~$M$. There exists a normal neighborhood~$U \subset T_x M$ such that if there exists an inextendible causal immersion~$f \colon N_0 \rightarrow U$ of dimension $q$ containing both~$0$ and~$v \in U$, then~$[0, v]$ is causal. Furthermore, when~$[0,v]$ is lightlike, the image of the immersion must contain the whole segment~$[0,v]$.
\end{prop}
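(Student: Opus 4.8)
The plan is to mirror the proof of Proposition \ref{geodesics}, using the comparison metric $g_-$ to reduce the causal case to the timelike one already treated there, and then to extract the lightlike rigidity clause from Lorentzian geometry. First I would take the cylindrical normal neighborhood $U = C(a,b)$ of $x$ together with constant metrics $g_- < g < g_+$ furnished by the preceding lemmas. If $f : N_0 \to U$ is an inextendible causal immersion for $g$, then by the very definition of the relation $g_- < g$ every $g$-causal tangent vector is $g_-$-timelike, so $f$ is an inextendible timelike immersion for the flat metric $g_-$ (inextendibility being a purely topological condition, independent of the metric). As in the proof of Proposition \ref{geodesics}, applied to $(U, g_-)$ via Proposition \ref{flatgeodesics}, the composition of $f$ with the projection onto $\D^q(0,b)$ is then a diffeomorphism; hence $f$ is the graph of a map $\overline{f} : \D^q(0,b) \to (-a,a)^p$, and for any $v$ in its image the segment $[0,v]$ is $g_-$-timelike, so in particular $g_-(v) < 0$.

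Next, exactly as in Proposition \ref{geodesics}, I would choose a $p$-dimensional subspace $F \subset U$ which is $g_-$-positive and $g_-$-orthogonal to $v$, and set $L = F \oplus \R v$; by Lemma \ref{lemmalorentzian}, $(L \cap U, g|_L)$ is a Lorentzian manifold for which $L \cap U$ is a normal neighborhood. Since $\mathrm{Im}(f)$ is the graph of $\overline{f}$ over $\D^q(0,b)$ and $L$ meets it transversally, the intersection $L \cap \mathrm{Im}(f)$ contains a smooth path $\sigma$ running from $0$ to $v$; being contained in $\mathrm{Im}(f)$ it is $g$-causal, hence a causal path in the Lorentzian manifold $L$. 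By Proposition \ref{Lorentzianlocalstructure} and the remark following it, the geodesic $[0,v]$ in $L$ — which coincides with the geodesic $[0,v]$ of $M$, since $L$ is a linear subspace of the normal neighborhood $U$ — is causal. This establishes the first assertion.

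For the lightlike clause, suppose $[0,v]$ is $g$-null. Then in the Lorentzian manifold $L$ the point $v$ lies on the null cone of $0$, so it belongs to $J^+(0) \setminus I^+(0)$ there. Invoking the standard rigidity of null geodesics in Lorentzian geometry — in the spirit of the remark after Proposition \ref{Lorentzianlocalstructure}, where the points joined to $0$ by a lightlike geodesic are exactly those on the null cone — a causal path from $0$ to such a point $v$ inside a normal neighborhood must be, up to reparametrization, the null geodesic between them. Hence $\sigma$ has the same image as $[0,v]$, and since $\sigma \subset \mathrm{Im}(f)$, the whole segment $[0,v]$ lies in the image of $f$.

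The step I expect to be the main obstacle is precisely this last clause: one must argue carefully that the causal path $\sigma$ produced inside $\mathrm{Im}(f)$ cannot detour but is pinned to the null geodesic, and then identify that null geodesic — taken in $L$ — with the segment $[0,v]$ of $M$. A secondary technical point, already implicit in Proposition \ref{geodesics}, is verifying that $L \cap \mathrm{Im}(f)$ genuinely contains a connected arc joining $0$ and $v$ rather than merely containing the two points in possibly distinct components; this is where the transversality of $L$ with the graph $\mathrm{Im}(f)$ and the convexity of $\D^q(0,b)$ are used.
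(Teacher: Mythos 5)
Your proposal is correct and follows essentially the same route as the paper: pass to the flat comparison metric $g_-$ to see $f$ as a graph over $\D^q(0,b)$, intersect its image with the Lorentzian slice $L = F \oplus \R v$ from Lemma \ref{lemmalorentzian} to obtain a causal path from $0$ to $v$, and invoke the standard Lorentzian null-geodesic rigidity (the paper cites \cite{largescalestructure}, proposition 4.5.1) for the lightlike clause. Your extra care about the connectedness of $L \cap \mathrm{Im}(f)$ is a reasonable refinement of a point the paper leaves implicit, but it does not change the argument.
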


\begin{proof}
    We proceed in the same fashion as previously. Let $D(a,b)$ be a cylindrical neighborhood around $x$ and $g_-$ a constant metric verifying $g_- < g$. Since~$f$ is causal for~$g$, it must be 
    timelike for~$g_-$, meaning that it must be the graph of a map~$\overline{f} \colon \D^q(0,b) \rightarrow (-a, a)^p$. 
    Let~$F$ be a~$p$-dimensional vector space in~$U$ which is positive for~$g_-$ and orthogonal to~$v$ for~$g_-$ and let~$L = F \oplus v$. By lemma \ref{lemmalorentzian},~$L$~must be Lorentzian for~$g$. The intersection~$\Imm(f) \cap L$ is a path between the origin and~$v$ 
    which is causal in~$L$. This means that the line~$[0,v]$ is either timelike or lightlike. In the case where it is lightlike, results of Lorentzian geometry (see \cite{largescalestructure}, proposition 4.5.1) tell us that the path~$\Imm(f) \cap L$ must be the lightlike geodesic~$[0,v]$, hence the result.
\end{proof}

\noindent It also gives us a characterization of smooth maps which are causal immersions, meaning that 
$f \colon N_0 \rightarrow M$ is a timelike (resp. causal) immersion if and only if for each~$y \in N_0$, there exists a cylindrical neighborhood 
$U = C(a,b)$ centered at~$x = f(y)$ such that the restriction of~$f$ to~$f^{-1}(U)$ 
is the graph of a map from~$\D^q(0,b)$ to~$(-a, a)^p$ which takes values in the area~$\{g|_0 < 0\} \subset U$ (resp.~$\{g_x \leqslant 0\}$). 
This characterisation is interesting, as it can easily be generalized when~$f$ is only a continuous map, which gives us a bigger class of maps that 
we can call causal.

\begin{defn}\label{continuouscausal}
    A continuous map~$f \colon N_0 \rightarrow M$ from a smooth manifold~$N_0$ to~$M$ will be said to be timelike (resp. causal) if for each~$y \in N_0$, there exists a cylindrical neighborhood~$U = C(a,b)$ of~$x = f(y)$ such that the restriction 
    of~$f$ to the connected component of~$f^{-1}(U)$ containing $x$ is the graph of a map from~$\D^q(0,b)$ to~$(-a, a)^p$ taking values in the area~$\{g|_x < 0\}$ (resp.~$\{g|_x \leqslant 0\}$).
\end{defn}

\noindent This generalization will become useful later on when we will construct limit maps which are not necessarily smooth. 

\begin{prop}\label{diffalmosteverywhere}
    Let~$f \colon N_0 \rightarrow M$ be a continuous causal map. Then~$f$ is differentiable almost everywhere and for each~$x$ in which~$f$ is differentiable,~$\Imm(df_x)$ is causal.
\end{prop}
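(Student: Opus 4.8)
The plan is to localize the problem using the characterization of continuous causal maps just established, and then invoke Rademacher's theorem. First I would fix a point $y \in N_0$ and take a cylindrical neighborhood $U = C(a,b)$ of $x = f(y)$ as in the definition of a continuous causal map, so that on $f^{-1}(U)$ the map $f$ is the graph of a map $\overline{f} : \mathbb{D}^q(0,b) \rightarrow (-a,a)^p$. The key observation is that this graph condition forces $\overline{f}$ to be $1$-Lipschitz: indeed, $\overline{f}$ takes values in the region $\{g|_x \leqslant 0\}$, which after the linear change of coordinates making $g|_x$ standard is exactly the condition that the segment between any two points of the graph is causal for the constant metric $g|_x$, and by Proposition \ref{timeposition} applied to the constant metric $g|_x$ this means the graph is in causal position, i.e. $\overline{f}$ is $1$-Lipschitz (after possibly shrinking $U$ so that $g$ is uniformly close to $g|_x$, one gets a uniform Lipschitz bound).

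Once $\overline{f}$ is $1$-Lipschitz on $\mathbb{D}^q(0,b)$, Rademacher's theorem gives that $\overline{f}$ is differentiable almost everywhere there; since $N_0$ is covered by countably many such chart domains $f^{-1}(U)$ and the chart maps are diffeomorphisms (hence preserve null sets), $f$ is differentiable almost everywhere on $N_0$. At a point $x_0$ of differentiability, I would argue that $Im(df_{x_0})$ is causal by a direct limiting argument: writing $f$ locally as the graph of $\overline{f}$, a tangent vector to the graph at $x_0$ is of the form $(d\overline{f}_{x_0}(w), w)$, and this is a limit of secant directions $\bigl(\overline{f}(x_0 + tw) - \overline{f}(x_0), tw\bigr)/t$, each of which spans a causal segment for the metric $g|_x$ up to an error controlled by the modulus of continuity of $g$. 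Passing to the limit $t \to 0$, and using that the closed causal cone is closed, gives $g_{f(x_0)}(df_{x_0}(v)) \leqslant 0$ for every $v \in T_{x_0}N_0$, which is the claim.

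The main obstacle I expect is the passage from the pointwise graph condition in the cylindrical chart to a genuine $1$-Lipschitz (or $k$-Lipschitz for some $k$ slightly larger than $1$, which is all Rademacher needs) estimate that is uniform on a fixed neighborhood: a priori the definition only controls $\overline{f}$ pointwise against the cones $\{g|_x \leqslant 0\}$ at the single point $x$, and one must leverage the smoothness of $g$ to compare the cones at nearby points $x' \in U$ with the cone at $x$, shrinking $b$ if necessary so that the discrepancy is absorbed into a constant like $2$. This is exactly the kind of comparison packaged in Lemma \ref{lemmalorentzian} and its surrounding discussion (sandwiching $g$ between constant metrics $g_-$ and $g_+$), so I would cite that sandwich: since $g_- < g$ on $U$, being causal for $g$ implies being timelike for the constant metric $g_-$, and Proposition \ref{flatgeodesics}/\ref{timeposition} for $g_-$ then yields the uniform Lipschitz bound. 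The rest is routine measure theory and a closedness argument for the causal cone.
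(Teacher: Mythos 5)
Your proposal is correct and follows essentially the same route as the paper's proof: localize in a cylindrical neighborhood, use the constant comparison metric $g_-$ with $g_- < g$ (via Proposition \ref{flatgeodesics}/\ref{timeposition}) to conclude that the graph map is $1$-Lipschitz, apply Rademacher's theorem, and deduce causality of $Im(df_x)$ from the cone condition. The only cosmetic difference is that you argue the last step directly by limits of secant directions and closedness of the causal cone, while the paper phrases it as a contrapositive (a non-causal differential would produce a positive direction, contradicting the graph lying in $\{g|_x \leqslant 0\}$); you are in fact slightly more explicit than the paper about upgrading the pointwise cone condition to a uniform Lipschitz bound.
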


\begin{proof}
    Let~$x \in N_0$ and~$C(a,b)$ a cylindrical neighborhood around~$f(x)$ such that the image of~$f$ in~$C(a,b)$ is the graph of a map~$\hat{f} \colon \D^q(0,b) \rightarrow (-a,a)^p$ with image in~$\{g_x \leqslant 0\}$. Since~$g_- < g$, the map~$f$ is also timelike for~$g_-$ as in each point, the lightcone of~$g_-$ contains the lightcone of~$g$. Thus the map~$\hat{f}$ must be~$1$-Lipschitz for~$g_-$, which guarantees that~$\hat{f}$ is differentiable almost everywhere in~$\D^q(0,b)$ by Rademacher's theorem (\cite{Rademacher}). Assume now that~$f$ is differentiable at~$x$ and that~$\Imm(df_x)$ is not causal. There must exist a direction on which~$df$ is positive, meaning that~$f$ cannot take values in~$\{g|_0 \leqslant 0\}$, hence the result.
\end{proof}

    \begin{prop}
    Let~$d$ be a Riemannian distance on~$M$ and~$f \colon N_0 \rightarrow M$ a continuous timelike map. For each positive continuous map~$\delta \colon N_0 \rightarrow \R$, there exists a smooth timelike immersion~$g \colon N_0 \rightarrow M$ such that for each~$x \in N_0$, 

    \[d(f(x), g(x)) \leqslant \delta(x).\]
\end{prop}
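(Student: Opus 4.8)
The plan is to reduce the global statement to a local smoothing problem, solved coordinate-chart by coordinate-chart via mollification, and then patch the local smoothings together with a partition of unity. The key observation is the characterization established just above: a continuous timelike map $f$ is, in a suitable cylindrical neighborhood $C(a,b)$ around any $f(y)$, the graph of a map $\hat f : \D^q(0,b) \to (-a,a)^p$ whose image lies in $\{g|_0 < 0\}$, and (since $g_- < g$) $\hat f$ is strictly $1$-Lipschitz for the Euclidean metric coming from $g_-$. Working in such a chart, the timelike condition is an \emph{open} condition on the $1$-jet of $\hat f$: at each point the tangent plane must lie in the open set of $q$-planes on which $g$ is negative definite. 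Since timelikeness for $g$ is implied by being sufficiently Lipschitz-close to the graph in the $C^1$ sense, a standard mollification $\hat f_\varepsilon = \hat f * \rho_\varepsilon$ produces a smooth map whose graph is still timelike for $g$ on a slightly shrunk disk, with $\hat f_\varepsilon \to \hat f$ uniformly and with derivative bounds preserved (convolution does not increase the Lipschitz constant, so $\hat f_\varepsilon$ stays strictly $1$-Lipschitz for $g_-$). This gives the smooth timelike approximation locally, with control $d(f(x), g(x)) \leqslant \delta(x)$ on the chart by choosing $\varepsilon$ small depending on $\delta$.

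First I would fix a locally finite cover of $N_0$ by open sets $V_i$ such that $f(V_i)$ lies in a cylindrical neighborhood $U_i = C(a_i, b_i)$ of some $f(y_i)$, and on each $V_i$ produce a smooth timelike immersion $g_i$ that is $C^0$-close to $f$, say within $\tfrac13 \inf_{V_i} \delta$ (after shrinking the $V_i$ slightly so that the infimum is positive; this uses that $\delta$ is continuous and positive). Next I would take a partition of unity $(\chi_i)$ subordinate to $(V_i)$ and attempt to glue: the naive convex combination $\sum_i \chi_i g_i$ does not make sense on a manifold, so instead I would glue \emph{inside a single chart}. Concretely, working in the coordinates of $U_i$ where everything is a graph over $\D^q$, the glued map is $\hat g = \sum_i \chi_i \hat g_i$ expressed in one fixed chart, which is legitimate because on overlaps all the $\hat g_i$ take values in a common Euclidean ball. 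The point is that on the overlap, all graphs are $C^1$-close to the graph of $f$, hence $C^1$-close to each other; a convex combination of maps whose $1$-jets all lie in a fixed convex neighborhood of the timelike locus again has timelike $1$-jet, because the set of $q$-planes negative definite for $g$ at a point is, while not convex globally, convex \emph{locally near any given timelike plane} (it is the image of a convex set under the graph-chart of the Grassmannian). Shrinking the cover enough that the $g_i$ are all $C^1$-close to $f$ on overlaps makes this work.

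The main obstacle is exactly this gluing step: ensuring that combining the local smooth approximations keeps the result timelike. The danger is that $g_i$ and $g_j$, while each $C^0$-close to $f$, need not be $C^1$-close to one another, so their convex combination could have a tangent plane that leaves the timelike cone. The resolution is to demand more in the local step: I would arrange each $g_i$ to be not merely $C^0$-close but to have graph-slope (i.e.\ the derivative $\nabla \hat g_i$) close to $\nabla \hat f$ in $L^\infty$ — which mollification delivers at Lebesgue points and, more robustly, one can even arrange $\|\nabla \hat g_i\|$ bounded by a constant $\kappa_i < 1$ strictly below $1$ for the $g_-$-metric, uniformly on $V_i$, because $\hat f$ is strictly $1$-Lipschitz and on the compact closure of a slightly shrunk $V_i$ its local Lipschitz constant is $<1$. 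With all $g_i$ having $g_-$-Lipschitz constant bounded by some $\kappa < 1$, any convex combination also has $g_-$-Lipschitz constant $\leqslant \kappa < 1$, hence is timelike for $g_-$; and by choosing the cylindrical neighborhoods small enough that $g$ and $g_-$ have nearly the same cone of directions, this forces timelikeness for $g$ as well. This simultaneously handles the $C^0$-closeness (convex combinations stay within $\sup_i d(f, g_i) \leqslant \delta$) and completes the argument; the causal (non-strict) case is identical with $\kappa \leqslant 1$, invoking Kirszbraun-type extension only if one wants the sharper statement of Remark \ref{lipschitzextension}.
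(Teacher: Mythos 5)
Your overall strategy is the same as the paper's: write $f$ locally as a graph $\hat f$ over $\D^q(0,b)$ in a cylindrical neighborhood, mollify, preserve timelikeness by comparing with a constant auxiliary metric, and patch over a locally finite cover (your convex-combination gluing in charts, using that graph slopes of norm less than one form a convex set, is a reasonable way to make explicit a step the paper leaves implicit). The gap is in the step that produces the uniform margin. You claim that on the compact closure of a slightly shrunk chart the local Lipschitz constant of $\hat f$ is $\kappa<1$ ``because $\hat f$ is strictly $1$-Lipschitz''; this is false. A map satisfying $d(\hat f(x),\hat f(y))<d(x,y)$ for all $x\neq y$ can have difference quotients tending to $1$ along the diagonal (for instance $\hat f(t)=\int_0^t(1-|u|)\,du$ near $0$, whose graph is a continuous timelike path with a lightlike derivative at one point), so compactness gives no uniform $\kappa<1$. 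This is precisely the subtlety of Remark \ref{lipschitzextension}, and it is why the paper's own proof concedes that in some cases ``a continuous deformation of $\hat f$ near zero'' is needed before any margin exists.

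The second problem is the direction of the cone comparison. You measure the Lipschitz bound with respect to $g_-$, but $g_-<g$ means the causal cone of $g_-$ strictly contains that of $g$, by a fixed factor at the center point; hence a bound $\kappa<1$ for $g_-$, i.e.\ timelikeness for $g_-$ with a margin, does not imply timelikeness for $g$, and shrinking the cylindrical neighborhood does not make the cones of $g$ and $g_-$ close, since the factor does not shrink with the neighborhood. The inequality must be exploited the other way around, as in the paper: choose a constant metric $g_\varepsilon$ with $g_\varepsilon>g$ on a small enough neighborhood (its cone lies strictly inside that of $g$) for which $f$ is still timelike (after the deformation mentioned above), so that $\hat f$ is $1$-contracting for $g_\varepsilon$ and every smooth convolution that is $1$-contracting for $g_\varepsilon$ is automatically timelike for $g$. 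With the margin set up in this orientation, your convex-combination patching would go through; as written, the margin step fails on both counts.
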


\begin{proof}
    Let us consider a locally finite covering of compacts~$(U_i)$ of~$N_0$. Let~$x \in U_i$. Since~$f$ is timelike, one can find a cylindrical neighborhood~$C(a,b)$ of~$f(x)$ such that~$f$ can be seen as the graph of a map~$\hat{f} \colon \D^q(0,b) \rightarrow (-a,a)^p$. For each~$\varepsilon >0$, let us define a metric
    
    \[g_{\varepsilon} = x_1^2+\cdots+x_p^2 - (1 - \varepsilon)(y_1^2 + \cdots + y_q^2)\]

    \noindent which is constant on~$C(a,b)$ and which verifies~$g_{\varepsilon}|_0 > g|_0$. Since~$f$ is timelike and by taking a small enough neighborhood~$V_x = C(a',b') \subset C(a,b)$, one can find a~$\varepsilon$ such that~$f$ is timelike for~$g_{\varepsilon}$ and~$g_{\varepsilon} > g$ on~$C(a', b')$. In somes cases, this may require a continuous deformation of~$\hat{f}$ near zero, for instance when~$f$ is a timelike path in a lorentzien manifold for which~$g(\dot{f}(x)) = 0$. 

    \indent The map~$f$ is timelike for~$g_{\varepsilon}$, meaning that~$\hat{f}$ must be a~$1$-contracting map for~$g_{\varepsilon}$ from~$\D^q(0,b)$ to~$(-a,a)^p$. Since~$g_{\varepsilon} > g$, it is enough to find a smooth approximation of~$\hat{f}$ which is~$1$-contracting for~$g_{\varepsilon}$ ; one may find such a uniform approximation by taking a convolution of~$\hat{f}$. Furthermore, it is possible to ask that this approximation be equal to~$\hat{f}$ on~$\D^q(0,b)$.
    
    \indent Since the covering~$(U_i)$ is locally finite and each~$\overline{U_i}$ is compact, it is possible to extract a locally finite covering of the~$(V_x)$. We can then build the smooth timelike immersion~$g$ on the entirety of~$N_0$, hence the result.
\end{proof}

\begin{rem}
    With this same proof, it is also possible to approximate~$f$ with a timelike immersion~$g$ such that~$g = f$ on a finite set of points.
\end{rem}

\noindent The proof for the approximation of causal, non necessarily timelike curves is slightly more involved.

\begin{prop}\label{approxcausal}
    Let~$d$ be a Riemannian distance on~$M$ and~$f \colon N_0 \rightarrow M$ a continuous causal map. For each positive continuous map~$\delta \colon N_0 \rightarrow \R$, there exists a smooth causal immersion~$g \colon N_0 \rightarrow M$ such that for each~$x \in N_0$, 

    \[d(f(x), g(x)) \leqslant \delta(x).\]
\end{prop}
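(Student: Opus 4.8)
The plan is to reduce the causal case to the timelike case already established, by a two-step approximation: first perturb $f$ to a continuous map which is \emph{timelike} for a slightly larger metric $g_+$, then invoke the previous proposition for timelike maps applied to that metric. Concretely, I would work locally on a locally finite covering by compact sets $(U_i)$ of $N_0$, exactly as in the timelike proof. Around any $x \in U_i$ choose a cylindrical neighborhood $C(a,b)$ of $f(x)$ in which $f$ is the graph of a $1$-Lipschitz map $\hat f : \D^q(0,b) \rightarrow (-a,a)^p$ (for the constant metric $g_-$, say), and in which there is a constant metric $g_+$ with $g|_0 < g_+|_0$. The subtlety, and the reason this proposition is harder than its timelike analogue, is that a $1$-Lipschitz map need not admit a $C^0$-close approximation which is $1$-\emph{contracting}: a convolution of $\hat f$ is still only $1$-Lipschitz, so it is only causal, not timelike. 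This is precisely the obstruction flagged in Remark \ref{lipschitzextension}.

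To get around this, the key idea is to compose the smoothed map with a contraction of the target. After convolving $\hat f$ to a smooth $1$-Lipschitz map $\hat f_\varepsilon$, replace it by $(1-\varepsilon)\hat f_\varepsilon$, which is $(1-\varepsilon)$-Lipschitz, hence strictly contracting, hence timelike; by choosing $\varepsilon$ small and the neighborhood $V_x = C(a',b') \subset C(a,b)$ small, one keeps this new graph within the prescribed $\delta$-tube of $f$, since $\hat f_\varepsilon$ is uniformly close to $\hat f$ and $\|(1-\varepsilon)\hat f_\varepsilon - \hat f_\varepsilon\| = \varepsilon\|\hat f_\varepsilon\|$ is small on the compact $\D^q(0,b')$. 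One must check that the resulting graph is still causal (indeed timelike) for the original $g$ on $V_x$: since $g|_0 < g_+|_0$ on the cylindrical neighborhood and the new graph is timelike for a metric dominating $g$ once we shrink, this holds after passing to a small enough $V_x$. As in the timelike case, a continuous deformation of $\hat f$ near the basepoint may be needed when $f$ is only lightlike there, e.g. if $f$ restricts to a lightlike path, so that the perturbed graph genuinely enters $\{g|_0 \leqslant 0\}$ rather than its boundary in a controlled way.

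Finally I would globalize: extract from the $(V_x)_{x \in \overline{U_i}}$, using compactness of each $\overline{U_i}$ and local finiteness of $(U_i)$, a locally finite refinement, and patch the local smooth timelike (a fortiori causal) immersions together with a partition of unity subordinate to it, arranging at each stage that the running approximation stays within $\delta$ of $f$ and that the gluing regions inherit causality from being squeezed between $g$ and a dominating constant metric. The main obstacle is the one described above — producing a \emph{smooth causal} (here we can even afford timelike) local model that is $C^0$-close to a merely $1$-Lipschitz graph — and the device of post-composing with the dilation $(1-\varepsilon)\,\cdot$ on the target $\R^p$-factor is what resolves it; everything else parallels the proof of the timelike approximation proposition verbatim.
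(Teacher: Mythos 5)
There is a genuine gap, and it sits exactly at the point where you deviate from the statement: your strategy sets out to produce a \emph{timelike} approximation of a merely causal map ("we can even afford timelike"), and this stronger goal is false in general, so no amount of care in the gluing can save it. Concretely, take the flat Lorentzian torus $\R^{1,1}/\Z^2$ (a spacetime satisfying the standing hypotheses; nothing in the proposition excludes $q=1$) and let $f : \s^1 \rightarrow M$ be a closed lightlike geodesic, which is a continuous causal map. For a small constant $\delta$, any $g$ with $d(f(x),g(x)) \leqslant \delta$ is homotopic to $f$, and a closed timelike curve cannot represent a lightlike class: lifting to $\R^{1,1}$ and writing the lift as $(x(t),y(t))$ with $|x'| < |y'|$ and period displacement $(L,L)$, one gets $L = \left| \int x' \right| \leqslant \int |x'| < \int |y'| = \left| \int y' \right| = L$, a contradiction; lightlike $q$-tori in flat $(p,q)$-tori give the same obstruction in higher signature. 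This also explains why the globalization step in your proposal cannot work: the dilation $(1-\varepsilon)\cdot$ is a chart-dependent operation (it contracts toward the center $f(x)$ of each cylindrical neighborhood), the local models therefore disagree on overlaps, and a partition-of-unity combination of $1$-Lipschitz graphs is not $1$-Lipschitz (the extra term $\sum_i d\chi_i \otimes \hat f_i$ need not respect the cone), so "inheriting causality from being squeezed between $g$ and a dominating constant metric" is not justified — and by the example above it cannot be, since the output would be globally timelike.

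The paper's proof is designed precisely to avoid upgrading causal to timelike. Using Proposition \ref{diffalmosteverywhere} (Rademacher), at a point $x$ of differentiability it splits $Im(df_x)$ orthogonally into an isotropic part $F$ and a negative part $Q$, and it modifies the metric only in the $Q$-directions, setting $g_{\varepsilon} = g|_{\R^{k,k}} + g|_{\R^{p-k}} + (1-\varepsilon) g|_Q$; this constant metric satisfies $g_{\varepsilon} \geqslant g$ on a small neighborhood while the \emph{original} map $f$ remains causal ($1$-Lipschitz) for $g_{\varepsilon}$ there, because its degenerate directions lie in the untouched $\R^{k,k}$ factor. Convolution preserves the $1$-Lipschitz bound for $g_{\varepsilon}$, and $g_{\varepsilon} \geqslant g$ transfers causality back to $g$; at no point are the lightlike directions pushed strictly inside the cone. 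Your local scaling step is fine as a local construction, but the missing idea is a comparison metric $\geqslant g$ for which $f$ itself stays causal — that is what the a.e.\ differentiability and the adapted splitting provide, and it is what Remark \ref{lipschitzextension} is warning cannot be replaced by a strictly contracting (timelike) surrogate.
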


\begin{proof}
    Let~$(U_i)$,~$x \in U_i$,~$C(a,b)$ be as before. By proposition \ref{diffalmosteverywhere}, we know that~$f$ is differentiable almost everywhere. Let~$x \in U_i$ such that~$f$ is differentiable in~$x$. The sub-vector space~$\Imm(df_x)$ in~$C(a,b)$ must be causal, meaning it must be the orthogonal sum of an isotropic vector space~$F$ of dimension~$k$ and a negative vector space~$Q$ of dimension~$q-k$. Let us decompose~$T_{f(x)} M = \R^{k,k} \oplus \R^{p-k} \oplus Q$ where~$F \subset \R^{k,k}$. Finally, let us define the constant metric on~$C(a,b)$

    \[g_{\varepsilon} = g|_{\R^{k,k}} + g|_{\R^{p-k}} + (1- \varepsilon)g|_Q \]

    \noindent for each~$\varepsilon >0$. For a small enough neighborhood~$V_x = C(a', b')$ and a small enough epsilon, we have that~$g_{\varepsilon} \geqslant g$ and that~$f$ is causal for~$g_{\varepsilon}$. The map~$\hat{f}$ must then be~$1$-Lipschitz for~$g_{\varepsilon}$ and we can uniformaly approximate~$\hat{f}$ by a smooth~$1$-Lipschitz map for~$g_{\varepsilon}$. Since~$g_{\varepsilon} \geqslant g$, this map is still causal for~$g$, hence the result.
\end{proof}

\noindent Since most of our results concern smooth causal immersions which are inextendible, let's show an even stronger result when the continuous map~$f$ is assumed to be inextendible.

\begin{lemma}
    Let~$f \colon N_0 \rightarrow M$ be an inextendible continuous causal (resp. timelike) map from a smooth manifold~$N_0$ to~$M$ and let~$\delta \colon N_0 \rightarrow \R$ be a positive continuous map. There exists a smooth inextendible causal (resp. timelike) immersion~$g \colon N_0 \rightarrow M$ such that for each~$x \in N_0$,~$d(f(x), g(x)) < \delta(x)$.
\end{lemma}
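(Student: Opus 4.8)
The plan is to build on the previous proposition (approximation of a continuous causal map by a smooth causal immersion, Proposition \ref{approxcausal}) and upgrade it to preserve inextendibility. The new difficulty is that the smooth approximation $g$ produced there is only controlled pointwise by $\delta$; a priori a path $c : I \to N_0$ which is inextendible in $N_0$ could have $f \circ c$ inextendible in $M$ while $g \circ c$ converges, because the error $\delta$ could shrink too fast near the ``boundary at infinity'' of $N_0$. So the real content is to choose $\delta$ cleverly — or rather to run the local construction with enough uniform control — so that $g$ inherits the inextendibility of $f$.

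First I would recall the characterisation of inextendibility: $f$ is inextendible iff for every path $c : (a,b) \to N_0$ that leaves every compact set of $N_0$ as $t \to b$ (the only way a path can be inextendible in a manifold), the image $f \circ c$ also leaves every compact set of $M$. Equivalently, for every compact $K \subset M$, the preimage $f^{-1}(K)$ is contained in some ``bounded'' region of $N_0$ — more precisely, using a Riemannian distance $d_{N_0}$ and a fixed basepoint, $f^{-1}(K)$ has the property that any path running to infinity in $N_0$ eventually leaves $f^{-1}(K)$. The key quantitative reformulation: choose an exhaustion $N_0 = \bigcup_n V_n$ by compacts with $V_n \subset \mathring{V}_{n+1}$, and an exhaustion $M = \bigcup_n K_n$. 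Inextendibility of $f$ means that for each $n$ there is $m(n)$ with $f(N_0 \setminus V_{m(n)}) \subset M \setminus K_n$.

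Next I would choose $\delta$ subordinate to this data. On the compact shell $\overline{V_{m(n)+1} \setminus \mathring{V}_{m(n)}}$, set $\delta$ smaller than $\frac{1}{2}\,d(f(N_0 \setminus V_{m(n)+1}),\, K_{n-1})$ — a positive constant since $f(N_0 \setminus V_{m(n)+1}) \cap K_n = \emptyset$ and $K_{n-1}$ is compact — and then take a smooth positive $\delta : N_0 \to \R$ dominated by these shell-wise bounds (a partition-of-unity patching). Apply Proposition \ref{approxcausal} with this $\delta$ to get a smooth causal immersion $g$ with $d(f(x),g(x)) < \delta(x)$ everywhere. Then for any path $c : (a,b) \to N_0$ inextendible in $N_0$ and any compact $K_{n-1}$, once $c(t) \in N_0 \setminus V_{m(n)+1}$ we have $g(c(t)) \notin K_{n-1}$ by the triangle inequality; since $c$ leaves every $V_j$ eventually, $g \circ c$ leaves every $K_{n-1}$ eventually, i.e.\ $g \circ c$ is inextendible in $M$. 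Hence $g$ is inextendible.

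The main obstacle I expect is the bookkeeping that makes $\delta$ simultaneously (i) small enough on each shell so the above distance argument closes, (ii) genuinely positive and smooth on all of $N_0$, and (iii) still compatible with the local cylindrical-neighborhood construction of Proposition \ref{approxcausal} (there the approximation is built chart by chart over a locally finite cover, and one must check that shrinking the target tolerance does not obstruct the local $1$-Lipschitz smoothing — but it does not, since convolution approximations can be taken with arbitrarily small uniform error). A secondary subtlety is that a path inextendible in $N_0$ need not be proper in the naive sense if $N_0$ is, say, $\R^q$ minus nothing — but here $N_0$ is a manifold without boundary, so ``inextendible'' genuinely means ``escapes every compact'', and the exhaustion argument applies verbatim. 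I would therefore present the proof as: fix the exhaustions, extract $m(n)$ from inextendibility of $f$, define $\delta$ shell-wise, invoke Proposition \ref{approxcausal}, and verify the escape property by the triangle inequality.
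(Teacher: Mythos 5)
Your overall strategy (reduce to Proposition \ref{approxcausal} and tune the tolerance so that the error is controlled near the ends of $N_0$) is in the spirit of the paper's proof, but the way you extract quantitative control from inextendibility is a genuine gap. You reformulate inextendibility of $f$ as: for exhaustions $(V_n)$ of $N_0$ and $(K_n)$ of $M$, for each $n$ there is $m(n)$ with $f(N_0 \setminus V_{m(n)}) \subset M \setminus K_n$. That is properness of $f$, which is strictly stronger than the notion of inextendibility used here: inextendibility only says that $f \circ c$ fails to converge for every path $c$ which fails to converge in $N_0$; it does not say that $f$ eventually leaves every compact of $M$. The paper explicitly wants non-proper maps to count as inextendible (the covering $\R \rightarrow \s^1$ of section 3.2, the spiral immersion of figure \ref{upwardspiral}, and, crucially, any inextendible causal map into a compact spacetime such as $\ein^{p,q}$ with $q \geqslant 2$). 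For such $f$ either $m(n)$ does not exist or the distance $d\bigl(f(N_0 \setminus V_{m(n)+1}), K_{n-1}\bigr)$ you use to define $\delta$ on the shells is zero, so the shell-wise construction cannot be carried out. A related inaccuracy is the claim that the only inextendible paths are those escaping every compact: under the paper's definition a path may fail to converge by oscillating inside a compact set, so your final verification only covers escaping paths.

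The paper's proof sidesteps all of this by never attempting to show that $g \circ c$ escapes compacts of $M$. It replaces $\delta$ by $\overline{\delta} = \delta/\varphi$ with $\varphi$ coercive, so that $\overline{\delta} < \varepsilon$ outside a compact of $N_0$, applies Proposition \ref{approxcausal} with $\overline{\delta}$, and argues by contradiction: if $g \circ c$ converged to some $\ell \in M$, then since the error tends to $0$ along a path exiting the compacts of $N_0$, the triangle inequality forces $f \circ c$ to converge to $\ell$ as well, contradicting the inextendibility of $f$. No information about where $f$ sends the ends of $N_0$ is needed. To repair your argument, drop the properness reformulation and run this contrapositive version: make the error tend to zero at infinity of $N_0$ and transfer a hypothetical limit of $g \circ c$ back to $f \circ c$, rather than asking for a positive gap between $f(N_0 \setminus V_m)$ and compacts of $M$.
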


\begin{proof}
    Let~$\varphi$ be a proper continuous map greater than~$1$ on~$N_0$ and let~$\overline{\delta} = \frac{1}{\varphi} \delta$. The map~$\overline{\delta}$ verifies that~$\overline{\delta} < \delta$ and that for each~$\varepsilon > 0$, there exists a compact~$K$ of~$N_0$ such that~$\overline{\delta} < \varepsilon$ outside of~$K$. Proposition \ref{approxcausal} tells us that there exists a causal (resp. timelike) immersion~$g \colon N_0 \rightarrow M$ which verifies that~$d(f,g) \leqslant \overline{\delta} \leqslant \delta$. Let us show that~$g$ must be inextendible.

    \indent Let~$c$ be an inextendible path in~$N_0$ and assume that~$g \circ c$ converges in~$M$ to a point~$\ell$. Let~$\varepsilon > 0$. Let~$T_1 \in \R$ such that for all~$t > T_1$,~$d(g(c(t)), \ell) < \varepsilon$. Let~$K$ be a compact subset of~$N_0$ such that~$\overline{\delta} < \varepsilon$ outside of~$K$. Since~$c$ is inextendible in~$N_0$, it exits every compact of~$N_0$ ; in particular, there exists a~$T_2 > T_1$ such that for all~$t > T_2$,~$c(t) \notin K$. We then have for all~$t > T_2$,

    \[ \begin{split}
    d(f(c(t)), \ell) &\leqslant d(f(c(t)), g(c(t))) + d(g(c(t)), \ell) \\
    &\leqslant 2 \varepsilon. 
    \end{split} \] 

    \noindent Hence the path~$f \circ c$ converges to~$\ell$ which contradicts the assumption that~$f$ is inextendible, hence the result.
\end{proof}

\begin{rem}
    This shows that property of non-trivial causality can be weakened to its continuous counterpart, as the existence of an inextendible timelike map containing~$x \in M$ implies the existence of an inextendible timelike immersion containing~$x$.
\end{rem}

\begin{lemma}\label{timelikedeformation}
Let $f : N_0 \rightarrow M$ be a timelike map and let $x \in N_0$. Let $C(a,b)$ be a cylindrical neighborhood around $f(x)$ and $g_+$ a constant metric on $C(a,b)$ for which $f$ is timelike and $g \leqslant g_+$. Then there exists a neighborhood $U$ of $f(x)$ in $C(a,b)$ such that for each $y \in U$ and each causal sub-vecttor space $Q \subset T_y M$ of dimension $q$, there exists a causal map $g : N_0 \rightarrow M$ which is equal to $f$ outside of the connected component $V$ of $f^{-1}(C(a,b))$ containing $x$ and a point $z$ in $V$ such that $g(z) = y$, $g$ is differentiable in $z$ and $\Imm(dg_z) = Q$.
\end{lemma}

\begin{proof}
For each point $w$ in the boundary of $\Imm(f)$ in $\partial C(a,b)$, the segment $[f(x), w]$ must then be timelike for $g_+$. Let $U$ be a neighborhood of $f(x)$ in $C(a,b)$ such that for all $y \in U$ and all $w$ in the boundary of $\Imm(f)$, the segment $[y, w]$ is timelike. Let $y \in U$ and $Q \subset T_y M$ a causal sub-vector space of dimension $q$. Let $E$ be the set of $C(a,b)$ made of the boundary of $\Imm(f)$ in $\partial C(a,b)$ and of the intersection of $y + Q$ with $U$. Since the set $E$ is in causal position for $g_+$, by Kirszbraun theorem, there exists a causal map $\tilde{g}$ equal to $f$ on the boundary and having $(y+Q) \cap U$ in its image. Since $g \leqslant g_+$, $\tilde{g}$ must also be causal for $g$. The map $g$ equal to $\tilde{g}$ in $V$ and to $f$ elsewhere gives us the result.
\end{proof}

\begin{rem}
This deformation result or slight variations of it will be useful at several points in our proofs. In particular, this lemma implies that having non-trivial causality means that for any point $x$ in $M$ and any causal sub-vector space of dimension $q$ $Q \subset T_y M$, there exists an inextendible causal immersion of dimension $q$ through $y$ having $Q$ as its tangent space in $y$.
\end{rem}

\subsection{Future and past of oriented causal spaces of dimension $q-1$}

One defect to the notion of causality we defined is that contrary to its Lorentzian counterpart, it lacks the existence of a future and a past. In~$\R^{p,q}$, the set of points which are in timelike position with the origin is connected as soon as~$q \geqslant 2$. In order to obtain a relevant notion of past and future, one must make a few more assumptions on the sets we consider. 

Let $M$ be a $(p,q)$-spacetime with a given time orientation. Let~$f \colon N_1 \rightarrow M$ be a causal map of dimension~$q-1$ where~$N_1$ is an oriented closed manifold. Assume that~$f$ can be extended into 
an inextendible causal map of dimension~$q$,~$\overline{f} \colon N_0 \rightarrow M$ where~$N_1$ is a closed submanifold of~$N_0$ of co-dimension one. 
Since the orientation of~$N_1$ has been fixed and the orientation of~$N_0$ stems from the time orientation of~$M$, there is a standard way to choose between the two 
connected components of~$N_0 \setminus N_1$. The interior of $N_1$ will be the future of $f$ in $\overline{f}$ and the exterior will be its past.

\begin{defn}
    Let~$f \colon N_1 \rightarrow M$ be a causal map of dimension~$q-1$ where~$N_1$ is an oriented closed manifold. A point~$x \in M$ will be said to be 
    in the \emph{future} of~$f$ (resp. its \emph{past}) when there exists an inextendible causal map~$\overline{f} \colon N_0 \rightarrow M$ of dimension~$q$ 
    which extends~$f$ and which contains~$x$ in the future of~$f$ in~$\overline{f}$. The future (resp. past) of~$f$ will be noted~$J^+(f)$ (resp.~$J^-(f)$). 
    When there is no possible confusion, we will not make the distinction between~$f$ and its image.
\end{defn}

\begin{defn}\label{openfuture}
    Let~$f \colon N_1 \rightarrow M$ be a timelike map of dimension~$q-1$ where~$N_1$ is an oriented closed manifold. A point~$x \in M$ will be said to be 
    in the \emph{open future} of~$f$ (resp. its \emph{open past}) when there exists an inextendible timelike map~$\overline{f} \colon N_0 \rightarrow M$ of dimension~$q$ 
    which extends~$f$ and which contains~$x$ in the future of~$f$ in~$\overline{f}$. The open future (resp. open past) of~$f$ will be noted~$I^+(f)$ (resp.~$I^-(f)$). 
\end{defn}

\noindent These definitions can actually be extended to a larger class of maps~$f \colon N_1 \rightarrow M$ of dimension~$q-1$. 
The only thing we need is that for each inextendible map~$\overline{f} \colon N_0 \rightarrow M$ of~$f$ of dimension~$q$, 
$N_1 \subset N_0$ separates~$N_0$ into two connected components.

\begin{defn}
    A causal map~$f \colon N_1 \rightarrow M$ of dimension~$q-1$ will be said to be \emph{splitting} if 
    for each inextendible causal map~$\overline{f} \colon N_0 \rightarrow M$ extending~$f$,~$N_0 \setminus N_1$ is the union of two connected components.
\end{defn}

\noindent We define the past and the future of~$f$ in the same manner when~$f$ is splitting.

\begin{rem}
    The notion of future and past of~$f$ depends on the orientation of~$N_1$. When taking the opposite orientation, one exchanges the past and future of~$f$.
\end{rem}

\begin{prop}
Let $f : N_1 \rightarrow M$ be a splitting map. The open future $I^+(f)$ is open in $M$.
\end{prop}

\begin{proof}
This stems directly from lemma \ref{timelikedeformation}.
\end{proof}

\subsection{Causal diamonds in the pseudo-Euclidian space}\label{causaldiamondsinminkowski}

Let us continue the study of the pseudo-Euclidian space $\R^{p,q}$ that we began in section \ref{causalityminkowski}. As shown in Proposition \ref{flatgeodesics}, any inextendible causal immersion of dimension $q$ is the graph 
of a smooth map from~$\R^q$ to~$\R^p$ which is~$1$-Lipschitz.

\begin{rem}
    Since inextendible causal maps can be approximated by inextendible smooth immersions of dimension $q$, this result extends to the 
    continuous case. Meaning that any inextendible causal map in~$\R^{p,q}$ is the graph of a continuous map from~$\R^q$ to~$\R^p$ which is~$1$-Lipschitz, or~$1$-contracting 
    when it is a timelike map.
\end{rem}

\noindent This, in particular, allows us to determine the causal maps of dimensions~$q-1$ which are splitting 
in the sense previously defined.

\begin{prop}
    An oriented causal map of dimension~$q-1$ is extendible into an inextendible causal map of dimension~$q$ 
    if and only if it is the graph of a map~$f$ defined on a submanifold of co-dimension~$1$ of~$\R^q$ which is~$1$-Lipschitz for the Riemannian distances 
    of~$\R^q$ and~$\R^p$. Furthermore, this immersion is splitting if and only if~$N_1$ separates~$\R^q$ into two connected components
\end{prop}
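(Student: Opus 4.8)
The plan is to reduce the statement to the graph description of inextendible causal maps in $\R^{p,q}$ established above, together with Kirszbraun's theorem exactly as it was used in Proposition~\ref{timeposition}. Throughout I write $T : \R^{p,q} \to \R^q$ for the projection onto the negative coordinates. First I would treat the forward implication: suppose the oriented causal map $g : N_1 \to \R^{p,q}$ of dimension $q-1$ extends to an inextendible causal map $\overline g : N_0 \to \R^{p,q}$ of dimension $q$. By the graph description of inextendible causal maps recalled above (the continuous version of the statement proved for $\R^{p,q}$ via Proposition~\ref{EuclidianGH} and its causal analogue), $T \circ \overline g : N_0 \to \R^q$ is a homeomorphism and $\overline g$ is the graph of a $1$-Lipschitz map $h : \R^q \to \R^p$. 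Since $N_1 \subset N_0$ has codimension $1$, its image $S := (T \circ \overline g)(N_1) = T(g(N_1))$ is a codimension-$1$ submanifold of $\R^q$, and under the identification $T \circ \overline g$ the map $g = \overline g|_{N_1}$ is the graph of $h|_S$, which is again $1$-Lipschitz. I would stress that $S$ depends only on $g$ (it is $T(g(N_1))$), not on the chosen extension — this is the point that makes the "splitting" clause work.

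Conversely, suppose $g$ is the graph of a $1$-Lipschitz map $f : S \to \R^p$ on some codimension-$1$ submanifold $S \subset \R^q$. By Kirszbraun's theorem (valid since the target $\R^p$ is Euclidean; see \cite{schwartz1969nonlinear}), $f$ extends to a $1$-Lipschitz map $\tilde f : \R^q \to \R^p$. Its graph $\overline g : \R^q \to \R^{p,q}$, $y \mapsto (\tilde f(y), y)$, is a causal map of dimension $q$: wherever $\tilde f$ is differentiable the tangent plane carries the quadratic form $v \mapsto \|d\tilde f(v)\|^2 - \|v\|^2 \leqslant 0$, and the local graph structure of a causal map holds everywhere. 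It is moreover inextendible, because $T \circ \overline g = \mathrm{id}_{\R^q}$ is a bijection onto the complete space $\R^q$, so the image of an inextendible path in the source cannot converge in $\R^{p,q}$. Finally $\overline g|_S = g$ under the identification of $N_1$ with $S$ via $T \circ g$, so $g$ is extendible into an inextendible causal map of dimension $q$.

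For the last assertion I would argue that the splitting condition is insensitive to the choice of extension. Indeed, by the first paragraph every inextendible causal extension $\overline g : N_0 \to \R^{p,q}$ of $g$ comes equipped with a homeomorphism $T \circ \overline g : N_0 \to \R^q$ carrying $N_1$ onto the fixed submanifold $S = T(g(N_1))$, and hence carrying $N_0 \setminus N_1$ homeomorphically onto $\R^q \setminus S$. Therefore the number of connected components of $N_0 \setminus N_1$ equals that of $\R^q \setminus S$ for every such extension, so $g$ is splitting precisely when $\R^q \setminus S$ has exactly two connected components, i.e.\ when $N_1$ — identified with $S$ via $T$ — separates $\R^q$ into two connected components.

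The only ingredient beyond bookkeeping is Kirszbraun's theorem in the reverse direction; everything else is an unwinding of the graph picture. Accordingly, the main point to be careful with is making the two identifications $N_0 \cong \R^q$ and $N_1 \cong S$ induced by $T$ fully explicit, so that it is transparent that $S$, and hence the homeomorphism type of $\R^q \setminus S$, is an invariant of $g$ alone and not an artefact of the extension chosen.
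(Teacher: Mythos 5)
Your proof is correct and follows essentially the same route as the paper: the forward direction and the splitting clause are exactly the unwinding of the fact that any inextendible causal extension is the graph of a $1$-Lipschitz map over $\R^q$ (so that $N_0\setminus N_1$ is carried homeomorphically onto $\R^q\setminus S$), and the converse is the Kirszbraun extension argument that the paper invokes via Proposition \ref{timeposition}. Your version merely spells out the identifications the paper leaves implicit.
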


\begin{proof}
    The first part of the result comes from \ref{timeposition}. The second part comes immediately from the fact that any inextendible causal map extending~$f$ is a graph over~$\R^q$.
\end{proof}

\noindent In particular, we are going to study two examples. Let~$S^- = \{Q = -1\} \cap \R^{0,q}$ be the sphere of radius~$1$ 
in~$\R^{0,q}$ with the standard orientation and~$L = \R^{0,q-1} \subset \R^{0,q}$ be a hyperspace of~$\R^{0,q}$ bounding the half-space~$L \times \R_{>0}$ in~$\R^{0,q}$ with the induced orientation. Both are timelike splitting immersions of dimension~$q-1$. 
Let us first study the future of~$S^-$. Let~$S^+ = \{Q = 1\} \cap \R^{p, 0}$ be the positive sphere of radius~$1$ in~$\R^{p,0}$. The union of all lightlike segment coming from pairs of points in $S^- \times S^+$ is homeomorphic to a sphere of dimension~$p+q-1$ as it is the topological joint of two sphere of dimension $p-1$ and $q-1$. It is the boundary of a bounded domain in~$\R^{p,q}$. We will write this open space~$\Diam_{p,q}$.

\begin{figure}[h!bt]
    \begin{center}
      \includegraphics[width=.5\linewidth]{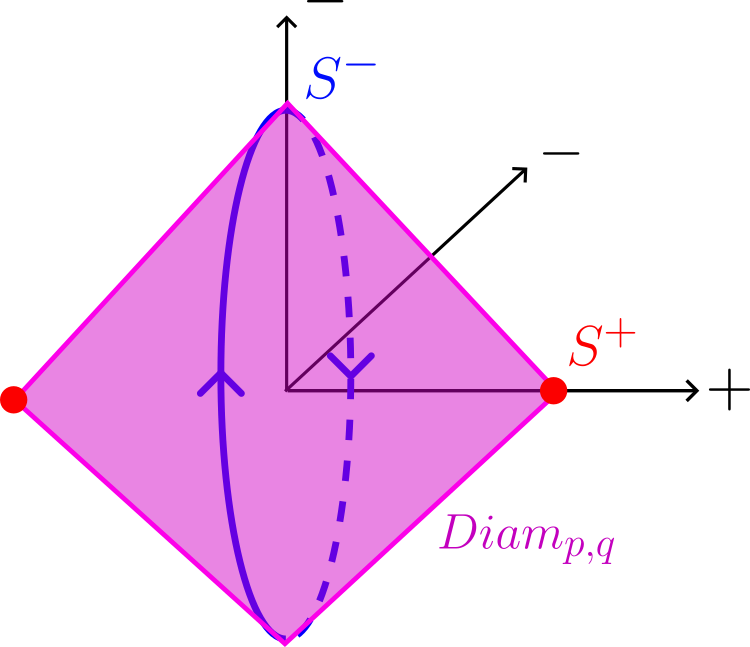}
      \end{center}
      \caption[]{The~$1$-sphere~$S_-$ and the~$0$-sphere~$S_+$.}
    \end{figure}

\begin{prop}
    The open set~$\Diam_{p,q}$ is the open future of~$S^-$ in~$\R^{p,q}$.
\end{prop}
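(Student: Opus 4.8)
The plan is to make $Diam_{p,q}$ completely explicit, to rephrase inextendible timelike immersions of dimension $q$ as graphs of $1$-contracting maps, and then to prove the two inclusions by a short distance estimate together with Kirszbraun's theorem.

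First I would fix coordinates $(u,v) \in \R^p \times \R^q$ on $\R^{p,q} = \R^{p,0} \oplus \R^{0,q}$, so that $S^-$ is the unit sphere $S^{q-1} \subset \R^q$ and $S^+$ the unit sphere $S^{p-1} \subset \R^p$. A point of the join $S^- \ast S^+$ has the form $(\lambda b, \mu a)$ with $a \in S^{q-1}$, $b \in S^{p-1}$, $\lambda,\mu \geqslant 0$ and $\lambda + \mu = 1$, so the join is $\{(u,v) : |u| + |v| = 1\}$ and therefore
\[ Diam_{p,q} = \{(u,v) \in \R^p \times \R^q : |u| + |v| < 1 \}. \]
By proposition \ref{EuclidianGH} and its continuous analogue, an inextendible causal immersion of dimension $q$ in $\R^{p,q}$ is the graph of a $1$-Lipschitz map $\R^q \to \R^p$, timelike precisely when that map is $1$-contracting; under this correspondence $S^-$ is the graph of the zero map over $S^{q-1}$. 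Hence an inextendible timelike extension of $S^-$ of dimension $q$ is the graph of a $1$-contracting $g : \R^q \to \R^p$ with $g|_{S^{q-1}} = 0$. The space of such $g$ is convex, hence connected, so the future side of $S^-$ in the graph is the same for all of them, and unwinding the orientation conventions on the trivial extension $g = 0$ shows it is the part of the graph lying over the bounded component $\{|v| < 1\}$ of $\R^q \setminus S^{q-1}$.

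The two inclusions are then short. For $J^+_0(S^-) \subset Diam_{p,q}$, I would take $x = (g(v),v)$ with $g$ $1$-contracting, $g|_{S^{q-1}} = 0$ and $|v| < 1$: if $v \neq 0$ then with $w = v/|v| \in S^{q-1}$ one has $|g(v)| = |g(v) - g(w)| < |v - w| = 1 - |v|$, and if $v = 0$ then $|g(0)| = |g(0) - g(a)| < 1$ for any $a \in S^{q-1}$, so in either case $|u| + |v| < 1$. Conversely, given $x = (x_1,x_2)$ with $|x_1| + |x_2| < 1$ (hence $|x_2| < 1$), I would consider the map on the compact set $S^{q-1} \cup \{x_2\} \subset \R^q$ sending $S^{q-1}$ to $0$ and $x_2$ to $x_1$; its only nontrivial pairs are $x_2$ against $a \in S^{q-1}$, where $|x_2 - a| \geqslant \mathrm{dist}(x_2,S^{q-1}) = 1 - |x_2|$, so it is $k$-Lipschitz with $k = |x_1|/(1-|x_2|) < 1$. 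Kirszbraun's theorem extends it to a $k$-Lipschitz, hence $1$-contracting, map $g : \R^q \to \R^p$, whose graph is an inextendible timelike immersion of dimension $q$ containing $S^-$ (as $g \equiv 0$ on $S^{q-1}$) and containing $x = (g(x_2),x_2)$, which lies over $\{|v| < 1\}$ since $|x_2| < 1$. Thus $x \in J^+_0(S^-)$.

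The step I expect to cost the most care is the orientation bookkeeping that pins the bounded component $\{|v| < 1\}$ down as the \emph{future} side of $S^-$ rather than the past, the latter being the unbounded piece $\{(u,v) : |v| - |u| > 1\}$. Everything else is elementary; the only point worth flagging is that routing the extension through a compact set produces a uniform Lipschitz constant $k < 1$, which is exactly what makes the strict form of Kirszbraun's theorem recalled in remark \ref{lipschitzextension} applicable — a plain $1$-Lipschitz extension would only yield a causal, not necessarily timelike, immersion.
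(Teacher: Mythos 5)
Your proof is correct and follows essentially the same route as the paper: it reduces the $0$-future of $S^-$ to graphs of $1$-contracting maps over $\R^q$ vanishing on $S^{q-1}$ (proposition \ref{EuclidianGH}) and uses the strict, compact-set form of Kirszbraun's theorem from remark \ref{lipschitzextension}, exactly as the paper does via proposition \ref{timeposition}. You merely make explicit the distance computation $|x_1| < 1 - |x_2|$ and the orientation bookkeeping that the paper leaves as ``clearly bounded by the joint,'' which is a welcome but not essentially different elaboration.
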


\begin{proof}
    Let $T : \R^{p,q} \rightarrow \R^{0,q}$ be the orthogonal projection. We first wish to determine the set of points~$x$ such that~$\{x\} \cup S^-$ is in timelike position and~$T(x)$ is in the disk bounded by~$S^-$ in~$\R^{0,q}$. Using the result proven in \ref{timeposition}, 
    those points are exactly those in~$T^{-1}(D(0,1))$ such that for each~$y \in S^-$,~$[x,y]$ is timelike. This area is clearly bounded by the union of all lightlike segments from pairs of points in $S^- \times S^+$, hence the result.
\end{proof}

\begin{rem}
    The space~$\Diam_{p,q}$ can be interpreted as a generalization of the causal diamond in Lorentzian geometry, where the two points defining the diamond are replaced by an oriented timelike Möbius sphere of dimension~$q-1$.
\end{rem}

\begin{defn}
A subset $S$ of $\ein^{p,q}$ will be called a positive (resp. negative) \emph{Môbius sphere} of dimension $k$ if it is the projectivisation of the intersection of a sub-vector space of $\R^{p+1,q+1}$ of signature $\R^{k+1,1}$ (resp. $\R^{1, k+1}$) with the isotropic cone $\pazocal{C}$.
\end{defn}

When a positive (resp. negative) Möbius sphere of dimension $k$ is contained within an affine chart $\R^{p,q}$ of $\ein^{p,q}$, there exists an affine space of signature $\R^{k+1,0}$ (resp. $\R^{0, k+1}$) for which it is a sphere for the induced metric. In particular, $S^-$ is a negative Môbius sphere of dimension $q-1$.

\begin{rem}
    When~$S$ is a topological sphere in timelike position of dimension~$q-1$ in~$\R^{p,q}$, either the open future or the closed future of~$S$, depending on its orienation, is bounded. It will be called a causal diamond. When $S$ is a Möbius sphere, the causal diamond will be said to be \emph{flat}.
\end{rem}

\noindent Let's now move on to the space~$L$.

\begin{prop}
    The future~$I^+(L)$ of~$L$ is the set of points~$x \in \R^{p,q}$ such that~$T(x)$ is in~$L \times \R_{>0}$ 
    and there exists a unique~$y \in L$ such that the segment~$[x,y]$ is timelike and orthogonal to~$L$.
\end{prop}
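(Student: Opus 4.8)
The plan is to reduce the statement to an elementary fact about contracting maps, using Proposition~\ref{EuclidianGH}'s description of inextendible timelike immersions of $\R^{p,q}$ as graphs. Choose coordinates so that $T:\R^{p,q}\to\R^q$ is the projection $(x,y)\mapsto y$ onto the negative directions, $\R^{0,q}=\{x=0\}$, and $L=\{x=0,\ y_q=0\}$; then $T$ identifies $L$ with the hyperplane $\{y_q=0\}\subset\R^q$ and $L\times\R^{>0}$ with $\{y_q>0\}$. By Proposition~\ref{EuclidianGH} every inextendible timelike immersion of $\R^{p,q}$ extending $L$ is the graph of a $1$-contracting map $g:\R^q\to\R^p$, and such a graph contains $L$ exactly when $g$ vanishes on $\{y_q=0\}$; by the orientation conventions fixed when $L$ was introduced as a splitting timelike map bounding $L\times\R^{>0}$, the future of $L$ in such an extension is the image of $\{y_q>0\}$. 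Thus, writing $x=(x_P,\tilde y)$ with $\tilde y=T(x)=(y',y_q)$, one has $x\in J_0^+(L)$ if and only if $y_q>0$ and there exists a $1$-contracting $g:\R^q\to\R^p$ with $g|_{\{y_q=0\}}=0$ and $g(\tilde y)=x_P$.

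First I would prove the forward implication. Given such a $g$, restrict it to the compact segment of $\R^q$ from $(y',0)$ to $(y',y_q)$; there $\|dg\|$ attains a maximum $k<1$, so $|x_P|=|g(\tilde y)-g(y',0)|\le k\,y_q<y_q$. Let $y\in L$ be the point whose negative part is $(y',0)$; then $x-y=(x_P,0,y_q)$, which has vanishing components along the first $q-1$ negative directions and is therefore orthogonal to $L$, and which satisfies $g_{p,q}(x-y)=|x_P|^2-y_q^2<0$, so $[x,y]$ is timelike. Together with $y_q>0$ this is precisely the asserted condition.

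Conversely, suppose $T(x)=(y',y_q)$ with $y_q>0$ and that some $y\in L$ makes $[x,y]$ timelike and orthogonal to $L$. Orthogonality to the (negative definite) subspace $L$ forces $y$ to be the orthogonal projection of $x$ onto $L$, i.e.\ $y$ has negative part $(y',0)$, so $x-y=(x_P,0,y_q)$ and timelikeness gives $|x_P|<y_q$. Now set $g(z',z_q)=h(z_q)$ with $h:\R\to\R^p$ smooth, vanishing on $(-\infty,0]$, with $h(y_q)=x_P$ and $\|h'\|_\infty\le k$ for a fixed $k$ satisfying $|x_P|/y_q<k<1$ — possible exactly because $|x_P|<y_q$ (move along the segment from $0$ to $x_P$ over the parameter interval $[0,y_q]$ at constant speed $|x_P|/y_q<k$, then smooth near the endpoints keeping speed $\le k$). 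Then $g$ is $k$-contracting, vanishes on $\{y_q=0\}$, and $g(T(x))=x_P$, so $x$ lies in the graph of $g$ over $\{y_q>0\}$; by the converse half of Proposition~\ref{EuclidianGH} this graph is an inextendible timelike immersion extending $L$ with $x$ in the future of $L$, hence $x\in J_0^+(L)$.

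The delicate point — the main obstacle — is the interplay between the pointwise bound $\|dg\|<1$ and the strict inequality $|x_P|<y_q$: pointwise $\|dg\|<1$ does not by itself yield a global Lipschitz constant $<1$, which is why in the forward direction one localizes to a compact segment and in the backward direction one produces $g$ with an honest uniform bound $k<1$. The remaining care is bookkeeping the orientations so that the future corresponds to $\{y_q>0\}$ rather than $\{y_q<0\}$, which is settled by the conventions attached to $L$ and its induced orientation.
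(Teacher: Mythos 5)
Your proof is correct and follows essentially the same route as the paper: both directions rest on the description of inextendible timelike maps as graphs of $1$-contracting maps over $\R^q$ (Proposition \ref{EuclidianGH}), with $y$ identified as the orthogonal projection of $x$ onto $L$ and the strict contraction giving $|x_P|<y_q$. The only cosmetic difference is in the inclusion of the region into $J_0^+(L)$: the paper takes the flat affine $q$-plane spanned by $L$ and the timelike segment $x-y$ (orthogonality making it negative definite) as the inextendible timelike extension, whereas you build a smoothed ramp graph $g(z',z_q)=h(z_q)$; both witnesses do the same job.
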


\begin{proof}
    Assume that~$T(x) \in L \times \R_{>0}$ and that the exists~$y \in L$ such that~$[x,y]$ is timelike and orthogonal to~$L$. The first condition tells us 
    that~$x$ is going to be in the future of~$L$ rather than is its past. Since~$[x,y]$ is orthogonal to~$L$, the vector space~$L \oplus x$ is negative of dimension~$q$, meaning 
    that~$L \cup \{x\}$ is in timelike position, hence~$x \in I^+(L)$. Conversely, if~$x$ is in the future of~$L$, then for each~$y \in L$,~$[x,y]$ is timelike. The orthogonal projection $y$ of $x$ onto $L$ is the unique point for which $[x,y]$ is orthogonal to $L$. Since $[x,y]$ must be causal, this gives the result.
\end{proof}

\begin{figure}[hbt]
    \begin{center}
      \includegraphics[width=.7\linewidth]{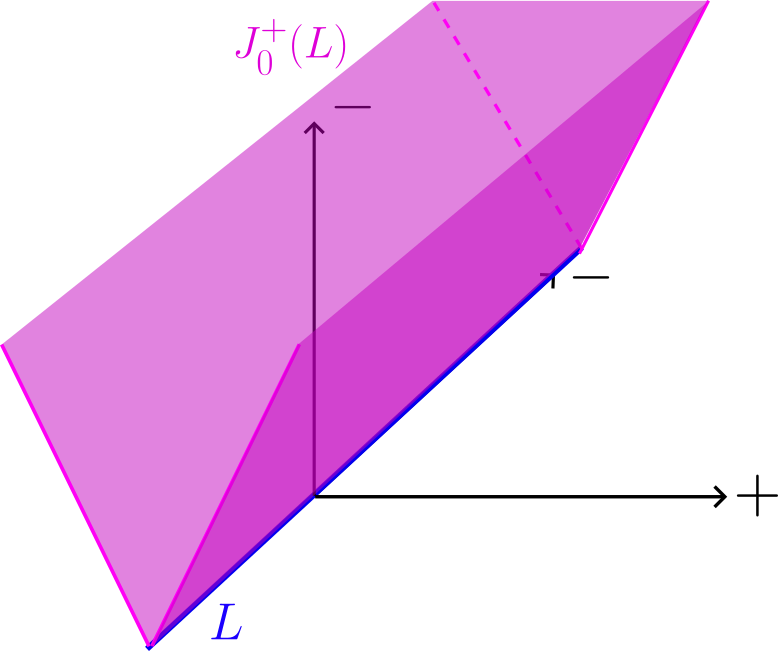}
      \end{center}
      \caption[]{The timelike line~$L$ and its future~$I^+(L)$.}
    \end{figure}

\begin{rem}
In particular, combining the orientation of $L$ with the time orientation of $\R^{p,q}$ gives a time orientation on the Lorentzian space $L^{\perp}$ and $I^+(L)$ is equal to $L + I^+(0)$ where $I^+(0)$ is the future of the origin in $L^{\perp}$.
\end{rem}

\begin{rem}
    This second example is to be compared with the open future cone of a point in the Lorentzian Minkowski space. In Lorentzian geometry, 
    both this space and the Lorentzian diamond are actually equivalent, in the sense that there exists a conformal transformation taking the first one to the second. 
    The same is true in higher signature.
\end{rem}

\begin{prop}
    The space~$\Diam_{p,q}$ is conformally equivalent to~$I^+(L)$.
\end{prop}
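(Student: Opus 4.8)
The plan is to exhibit an explicit conformal transformation of $\R^{p,q}$ carrying $J_0^+(L)$ onto $Diam_{p,q}$; this is the exact analogue of the classical Lorentzian statement that the future cone of a point is conformally a causal diamond, and, as there, the key ingredient is that the inversion $w \mapsto w/\q(w)$ is a conformal map of the pseudo-Euclidian space. First I would record the two regions explicitly. Writing a point of $\R^{p,q} = \R^p \times \R^q$ as $(u,w)$ with $\q(u,w) = \|u\|^2 - \|w\|^2$, and splitting $\R^q = \R^{q-1} \times \R$ with last coordinate $t$ so that $L = \R^{q-1} \times \{0\}$ inside $\R^{0,q}$, Proposition \ref{timeposition} together with the distance computation behind $Diam_{p,q} = J_0^+(S^-)$ gives $Diam_{p,q} = \{(u,w) : \|u\| + \|w\| < 1\}$, whose boundary is the join $\{\|u\|+\|w\|=1\} = S^- * S^+$; and the description of $J_0^+(L)$ just obtained gives $J_0^+(L) = \{(u,w) : w = (v,t),\ \|u\| < t\}$.

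Next, recall that the conformal group of $\R^{p,q}$ is generated by the isometries, the homotheties $w \mapsto \lambda w$ with $\lambda > 0$, and the inversion $\iota : w \mapsto w/\q(w)$ (defined on $\{\q \neq 0\}$, where it is an involution preserving the sign of $\q$), all of which rescale $g$ by a positive function. Let $P = (0,0,-1) \in \R^{0,q}$, a point strictly in the past of $L$, let $\sigma$ be the translation $w \mapsto w - P$, and let $\tau$ be the affine conformal map $(u,v,t) \mapsto (2u, 2v, 2t+1)$ (a homothety of ratio $2$ followed by a translation). I would set $\Psi = \tau \circ \iota \circ \sigma$. The point of placing $P$ in the past of $L$ is that, in the translated coordinates, $\sigma(J_0^+(L)) = \{\|u\| < t - 1\}$ lies entirely inside $\{\q < 0\}$: there $\q(u,v,t) \leq \|u\|^2 - t^2 < (t-1)^2 - t^2 = 1 - 2t < -1$, and the same estimate holds on its closure (which forces $t \geq 1$). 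Hence $\iota$ is a genuine conformal diffeomorphism of $\{\q<0\}$ onto itself, so $\Psi$ is a conformal diffeomorphism defined on a neighbourhood of $\overline{J_0^+(L)}$.

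It then remains to check that $\Psi$ maps $J_0^+(L)$ bijectively onto $Diam_{p,q}$. On the negative subspace $\R^{0,q}$ the map $\iota$ is, up to sign, the Euclidean inversion of $\R^q$; so a direct computation shows that $\iota$ sends $\sigma(L) = \{u = 0,\ t = 1\}$ to the round $(q-1)$-sphere of $\R^{0,q}$ through the origin of radius $\tfrac12$ centred at $-\tfrac12 e$ (with $e$ the unit vector in the $t$-direction), and $\tau$ carries that sphere onto $S^-$; thus $\Psi(L) = S^-$ (the one missing point being the image of the end of $L$ at infinity). A comparable bookkeeping on the remaining boundary strata — the null cone over $L$, and the part of $\partial J_0^+(L)$ that $\iota$ sends to the cone $\{\q(\,\cdot - P) = 0\}$, i.e.\ ``to infinity'' — shows that $\Psi$ matches $\partial J_0^+(L)$ with $\partial Diam_{p,q} = S^- * S^+$. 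Since $\Psi$ is an open map with $\Psi(0,0,1) = 0 \in Diam_{p,q}$ and $\Psi(\partial J_0^+(L)) = \partial Diam_{p,q}$, the image $\Psi(J_0^+(L))$ is the connected component of $\R^{p,q} \setminus \partial Diam_{p,q}$ containing $0$, namely $Diam_{p,q}$. Therefore $\Psi$ restricts to a conformal diffeomorphism $J_0^+(L) \xrightarrow{\sim} Diam_{p,q}$, which is the assertion.

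The step I expect to be the main obstacle is exactly this last one: confirming that the image is \emph{exactly} $Diam_{p,q}$ rather than a smaller conformally embedded copy, which forces one to keep track of how $\Psi$ acts on the whole boundary of $J_0^+(L)$, including its locus at infinity; the rest is the routine fact that inversions are conformal together with the domain estimate above. A cleaner but less elementary way to organize that part is to pass to the conformal compactification $\ein^{p,q}$: there $L$ completes to a round timelike $(q-1)$-sphere, the conformal group $\mathrm{O}(p+1,q+1)$ acts transitively on such spheres equipped with a coorientation, and an element taking the completion of $L$ to $S^-$ automatically carries $J_0^+(L)$ onto $J_0^+(S^-) = Diam_{p,q}$, since conformal maps preserve the causal structure and hence inextendible causal immersions and their futures.
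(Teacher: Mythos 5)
Your argument is correct and shares the paper's central mechanism (an inversion chosen so that the round sphere $S^-$ and the hyperplane $L$ correspond), but it identifies the domains by a genuinely different route. The paper works in the opposite direction with the single map $\varphi(x) = -\frac{x-e_1}{\q(x-e_1)}$ and then invokes conformal invariance of causality: a time-orientation preserving conformal transformation of $\ein^{p,q}$ carries inextendible causal immersions to inextendible causal immersions, hence futures to futures, so checking $\varphi(S^-) = L$ already gives $\varphi(Diam_{p,q}) = \varphi(J_0^+(S^-)) = J_0^+(L)$ with no boundary analysis at all — this is exactly the ``cleaner but less elementary'' alternative of your last paragraph. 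Your primary argument instead verifies the image by hand, and the step you flag as the main obstacle is the only one left open, but it does go through in a few lines: for a finite boundary point $x=(u,v,t)$ with $\|u\|=t$ one has $\q(\sigma x) = -D$ with $D = \|v\|^2+2t+1$, so $\Psi(x) = \bigl(-2u/D,\,-2v/D,\,(\|v\|^2-1)/D\bigr)$ and $\|u''\| + \|w''\| = \bigl(2t+\|v\|^2+1\bigr)/D = 1$, i.e.\ the null cone over $L$ lands in the join; and since $\q\circ\tau^{-1} < 0$ on the open diamond (by $\|u\| < 1-\|w\| \leqslant \|w-e\|$), the map $\sigma^{-1}\circ\iota\circ\tau^{-1}$ is a continuous inverse of $\Psi$ there, so a sequence escaping to infinity in $\overline{J_0^+(L)}$ cannot have image converging to an interior point of $Diam_{p,q}$; with these two facts your openness-and-connectedness argument closes. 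The trade-off is clear: your computation stays entirely inside $\R^{p,q}$ and never uses the (only asserted, not proved, in the paper) compatibility of futures and inextendibility with conformal extensions to $\ein^{p,q}$, at the cost of explicit bookkeeping on $\partial J_0^+(L)$ including its locus at infinity, whereas the paper's appeal to conformal invariance makes the domain identification immediate once $S^-$ is matched with $L$.
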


\begin{proof}
    Let $e_{p+1},\cdots, e_{p+q-1}$ be an orthonormal basis of $L$ completed into an orthonormal basis $e_1,\cdots,e_{p+q}$ of $\R^{p,q}$ with $Q(e_{p+q}) < 0$. We are going to use the map

    \[\varphi \colon x \in \Diam_{p,q} \longmapsto - \frac{x-e_{p+q}}{Q(x-e_{p+q})}.\]

    \noindent The map~$\varphi$ is the restriction of an inversion in~$\widehat{\ein}^{p,q}$, in particular it is a conformal transformation. We now only need to show that~$\varphi$ maps~$\Diam_{p,q}$ to~$I^+(L)$. 
    Since the notion of causality only depends on the conformal class of the metric, time-orientation preserving conformal maps send future to future. Thus it is enough to 
    show that~$\varphi$ takes~$S^-$ to~$L$, which is clear as its restriction to~$\R^{0,q}$ is the usual map taking the unit disk to the half-space, hence the result.
\end{proof}

\noindent This equivalence is particularly interesting, as there is a nice conformal model for~$J^+_0(L)$.

\begin{prop}\label{causaldiamonds}
   ~$J^+_0(L)$ is conformally equivalent to~$\h^p \times \h^q$.
\end{prop}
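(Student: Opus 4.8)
The plan is to extract from the preceding proposition a concrete description of $J_0^+(L)$ as a domain in $\R^{p,q}$, and then to split off a Milne-type factor so that the induced metric becomes manifestly conformal to the product metric of $\h^p \times \h^q$.

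First I would fix coordinates $\R^{p,q} = \R^p_u \times \R^{q-1}_{\bar v} \times \R_t$ with $g = |du|^2 - |d\bar v|^2 - dt^2$, so that $L = \{u = 0,\ t = 0\}$, its bounding half-space is $\{u = 0,\ t > 0\}$, and the condition $T(x) \in L \times \R^{>0}$ reads $t > 0$. Given $x = (u, \bar v, t)$ with $t > 0$ and $y = (0, \bar w, 0) \in L$, one has $x - y = (u, \bar v - \bar w, t)$, and for any $\bar z \in \R^{q-1}$ a direct computation gives $g\bigl(x - y,\, (0, \bar z, 0)\bigr) = -\langle \bar v - \bar w, \bar z\rangle$; hence $[x,y]$ is $g$-orthogonal to $L$ iff $\bar w = \bar v$, which pins $y$ down uniquely. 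For that $y$ we get $x - y = (u, 0, t)$, which is timelike iff $|u|^2 - t^2 < 0$. Combined with the preceding proposition this gives $J_0^+(L) = \{(u, \bar v, t) : t > 0,\ |u| < t\}$, carrying the induced metric $g = |du|^2 - dt^2 - |d\bar v|^2$.

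Next, on the open future cone $\{(u,t) : t > |u|\} \subset \R^{p,1}$ I would pass to coordinates $(\tau, \xi)$ with $\tau = \sqrt{t^2 - |u|^2} \in \R^{>0}$ and $\xi = \tau^{-1}(t,u) \in \h^p$ in the hyperboloid model; this is the standard (Milne) diffeomorphism onto $\R^{>0} \times \h^p$, under which $|du|^2 - dt^2 = -d\tau^2 + \tau^2\, g_{\h^p}$, with $g_{\h^p}$ the metric of the unit hyperbolic space. Substituting, $g = -d\tau^2 + \tau^2\, g_{\h^p} - |d\bar v|^2 = \tau^2\bigl(g_{\h^p} - \tau^{-2}(d\tau^2 + |d\bar v|^2)\bigr)$, and $\tau^{-2}(d\tau^2 + |d\bar v|^2)$ on $\R^{>0}_\tau \times \R^{q-1}_{\bar v}$ is exactly the upper half-space model of $\h^q$; call it $g_{\h^q}$. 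Hence $g = \tau^2\,(g_{\h^p} - g_{\h^q})$ on $J_0^+(L)$, and the map $(u, \bar v, t) \mapsto \bigl(\xi,\, (\tau, \bar v)\bigr)$ is a diffeomorphism of $J_0^+(L)$ onto $\h^p \times \h^q$ carrying $g$ to the conformally equivalent metric $g_{\h^p} - g_{\h^q}$, which is precisely the claimed conformal equivalence.

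All of this is routine; the only points that need care are the orthogonality argument that singles out $y$ (turning the abstract characterisation of $J_0^+(L)$ into a concrete domain), and, in the Milne step, checking that as $(u,t)$ ranges over the future cone and $\bar v$ over $\R^{q-1}$ the pair $(\tau, \bar v)$ sweeps out the entire half-space — so that the conformal map is genuinely onto $\h^p \times \h^q$ rather than a proper subdomain.
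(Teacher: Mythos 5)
Your proof is correct and takes essentially the same route as the paper: your Milne change of coordinates $(u,\bar v,t)\mapsto(\xi,(\bar v,\tau))$ is precisely the inverse of the paper's map $\psi(x,y,t)=tx+y$ (hyperboloid model for $\h^p$, half-space model for $\h^q$), and your metric computation amounts to the paper's orthogonality-plus-conformal-factor check carried out in those coordinates. One small remark: your conformal factor is the right one, namely $\psi^* g_{\R^{p,q}} = t^2\,(g_{\h^p}-g_{\h^q})$, so the factor $\tfrac{1}{t}$ displayed at the end of the paper's proof is a harmless slip.
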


\begin{proof}
    Let us consider the half-plane model of~$\h^q \simeq \R^{q-1} \times \R_{>0}$ with~$\R^{q-1}$ identified to~$L$ and~$\h^p \subset \R^{p,q}$ 
    taken to be the space~$\{Q = -1\} \cap (\R^{p,0} \oplus \R^{0,1})$ where~$\R^{0,1}$ is the orthogonal of~$L$ in~$\R^{0,q}$.The map we are going to consider is the following, 

    \[\psi \colon (x, (y, t)) \in \h^p \times \h^q \longmapsto t x + y.\]

    \noindent  It is clear that~$\psi$ takes~$\h^p \times \h^q$ to~$I^+(L)$, we now have to show that it is a conformal transformation. Let us first show that the image by~$\psi$ of the spaces~$\h^p \times \{*\}$ and~$\{*\} \times \h^q$ are orthogonal. Let~$(x,y,t) \in \h^p \times \h^q$. 
    The image by~$\psi$ of the set~$\h^p \times \{(y,t)\}$ is one of the two hyperbolic sheets of the affine pseudo-Euclidian space given by the orthogonal of~$L$ in~$y$.
    The image of~$\{x\} \times \h^{q}$ is in the vector space~$L \oplus x$ on which~$Q$ is definite negative. We see that those two affine spaces meet orthogonally in~$tx + y$.

    In order to show that~$\psi$ is a conformal transformation, we now only have to show that~$\psi|_{\h^p \times \{(y,t)\}}$ and~$\psi|_{\{x\} \times \h^q}$ are both conformal transformations on their image with the same conformal factor. 
    It is then easy to see that 

    \[\psi_{*} g_{\h^p \times \h^q} = \frac{1}{t} g_{\R^{p,q}},\]

    \noindent hence the result.
\end{proof}

\noindent From this result, we can deduce two corollaries.

\begin{coro}
    The pseudo-Riemannian space~$\h^p \times \h^q$ is conformally flat
\end{coro}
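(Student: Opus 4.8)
The plan is to deduce this immediately from Proposition~\ref{causaldiamonds}. That proposition produces a diffeomorphism $\psi$ from $\h^p \times \h^q$ onto $J_0^+(L)$ satisfying $\psi_* g_{\h^p \times \h^q} = \frac{1}{t}\, g_{\R^{p,q}}$, so $\psi$ is a conformal equivalence between $\h^p \times \h^q$ and the region $J_0^+(L)$ equipped with the restriction of the flat metric of $\R^{p,q}$. Now $J_0^+(L)$ is an open subset of the flat pseudo-Euclidean space $(\R^{p,q}, g_{\R^{p,q}})$: it is a future of the type described just after the definition of causal diamonds, hence open, and in any case $\psi$ is a diffeomorphism of a $(p+q)$-manifold into $\R^{p+q}$, so its image is open by invariance of domain. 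An open subset of a flat space is itself flat, a fortiori conformally flat, and conformal flatness depends only on the conformal class of the metric. Since $\psi$ is a global conformal equivalence onto such a subset, $\h^p \times \h^q$ is conformally flat: for any point one may take the whole manifold as the conformally flat chart domain, with $\psi$ as the chart.

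There is essentially no obstacle here: no curvature tensor need be written down, the conformal factor $\frac{1}{t}$ having already been identified in the proof of Proposition~\ref{causaldiamonds}. The only point worth a sentence is the one flagged above, namely that the target of $\psi$ is a genuine open piece of flat $\R^{p,q}$ of the correct signature, and not a proper submanifold or a boundary stratum; this is immediate from the explicit description of $\psi$ and of $J_0^+(L)$. If one prefers the conclusion in the form used downstream, pulling $g_{\R^{p,q}}$ back by $\psi^{-1}$ shows that $t\, g_{\h^p \times \h^q}$ is flat, i.e.\ the product metric $g_{\h^p} - g_{\h^q}$ is conformal, via the explicit factor $1/t$, to a flat metric.
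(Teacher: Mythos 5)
Your proposal is correct and is exactly the argument the paper intends: the corollary is stated as an immediate consequence of Proposition \ref{causaldiamonds}, since that proposition exhibits a global conformal equivalence between $\h^p \times \h^q$ and the open subset $J_0^+(L)$ of the flat space $\R^{p,q}$. Your additional remarks (openness of the image, the explicit factor $1/t$) only make explicit what the paper leaves unsaid.
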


\begin{coro}
    The diamond~$\Diam_{p,q}$ is conformally equivalent to~$\h^p \times \h^q$. In particular,~$\Diam_{p,q}$ gives an example of a proper open
    subspace of~$\ein^{p,q}$ which is conformally homogenous. 
\end{coro}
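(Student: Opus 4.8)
The plan is to chain together the conformal equivalences already established. By Proposition \ref{causaldiamonds}, we know $J^+_0(L)$ is conformally equivalent to $\h^p \times \h^q$, and by the proposition immediately preceding it, $Diam_{p,q}$ is conformally equivalent to $J^+_0(L)$ via the inversion $\varphi$. Composing these two conformal maps gives directly that $Diam_{p,q}$ is conformally equivalent to $\h^p \times \h^q$, which is the first assertion. So the only real content left is to argue that $Diam_{p,q}$ is a proper open subset of $\ein^{p,q}$ and that it is conformally homogeneous.

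For the embedding into $\ein^{p,q}$: recall from the introduction and section 3 that $\ein^{p,q} \simeq \s^p \times \s^q$ is the conformal compactification of $\R^{p,q}$, and $\R^{p,q}$ embeds conformally as a dense open subset. Since $Diam_{p,q}$ was defined as a bounded open subset of $\R^{p,q}$ (the interior bounded by the topological join of $S^-$ and $S^+$), it is carried to a proper open subset of $\ein^{p,q}$ under this embedding. Properness is clear because $Diam_{p,q}$ is bounded in $\R^{p,q}$, hence its closure is compact and does not exhaust $\ein^{p,q}$; alternatively, one notes $\ein^{p,q}$ is compact while $Diam_{p,q}$ is not, since it is conformally $\h^p \times \h^q$, which is noncompact.

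For conformal homogeneity: the group of conformal transformations of $Diam_{p,q}$ contains, via the conformal identification with $\h^p \times \h^q$, at least the product $\mathrm{Isom}(\h^p) \times \mathrm{Isom}(\h^q)$ acting by isometries, since isometries are in particular conformal. This product group acts transitively on $\h^p \times \h^q$ (as each factor group acts transitively on its hyperbolic space), so $Diam_{p,q}$ is conformally homogeneous. The main point to be careful about is simply making sure the conformal identifications really do intertwine these isometry actions with conformal automorphisms of $Diam_{p,q}$ sitting inside the conformal group of $\ein^{p,q}$; but this is immediate since a conformal diffeomorphism conjugates the conformal automorphism group of the source to that of the target, and isometries of $\h^p \times \h^q$ are a fortiori conformal automorphisms of it. I do not anticipate a serious obstacle here — the work was all done in Proposition \ref{causaldiamonds}; this corollary is essentially bookkeeping, with the one substantive observation being that a \emph{proper} open subset of the compact space $\ein^{p,q}$ can nonetheless carry a transitive conformal action.
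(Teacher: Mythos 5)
Your proposal is correct and follows exactly the route the paper intends: the corollary is an immediate consequence of composing the conformal equivalence $Diam_{p,q} \simeq J_0^+(L)$ with Proposition \ref{causaldiamonds}, with properness clear from boundedness of $Diam_{p,q}$ in $\R^{p,q}$ and homogeneity coming from the transitive action of $\mathrm{Isom}(\h^p) \times \mathrm{Isom}(\h^q)$ by conformal maps. The paper offers no further argument, so your write-up matches its (implicit) proof.
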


\begin{rem}
    Conjectures on the proper open spaces of flag manifolds predicted that such domains in~$\ein^{p,q}$ should be classifiable (see \cite{Zimmer}); In \cite{chalumeau2024properquasihomogeneousdomainseinstein}, Chalumeau and Galiay proved that the flat causal diamonds are the only proper quasi-homogenous domains of~$\ein^{p,q}$.
\end{rem}

The sphere~$S^-$ seen as a subset of $\ein^{p,q}$ via the conformal embedding $\R^{p,q}\subset \widehat{\ein}^{p,q}$ can be taken to any timelike Möbius sphere of~$\ein^{p,q}$ of dimension~$q-1$ via the conformal transformations of~$\widehat{\ein}^{p,q}$. Thus, any two causal diamonds can be linked by a conformal transformation of~$\widehat{\ein}^{p,q}$. In particular, the past of~$S_-$ in $\widehat{\ein}^{p,q}$ is also a causal diamond in any affine chart which contains it.

As we have seen before, one of the way higher signature differs from Lorentzian signature is that not every timelike map can be extended into an inextendible timelike map. 
One consequence is that when taking an open set~$\Omega$ of a spacetime~$M$ (for instance~$M = \R^{p,q}$), it is possible that some timelike maps could be 
inextendible in~$\Omega$ but not in~$M$, and furthermore that this map could not be extended into an inextendible timelike map in~$M$. In other words, for~$x,y$ in~$\Omega$, 
it is possible that~$\{x,y\}$ is in timelike position in~$\Omega$ but not in~$M$. 

\begin{exmp}
    Assume $\R^{p,q} = \R^{1,2}$ and let~$U = D(0,2) \setminus \overline{D(0,1)}$ be an open ring in~$\R^2$. Let~$\Omega = T^{-1}(U)$. For each~$x,y \in \Omega$,~$\{x,y\}$ is 
    in timelike position in~$\Omega$.
\end{exmp}

\begin{proof}
    Let $\pi : \tilde{U} \simeq \R \times (0,1) \rightarrow U$ be the universal covering of $U$, $a>0$, $b \in \R^{1,0}$ and let $f : (x,t) \mapsto at +b + \pi(t,x) $. By the right $a$ close to zero and $b$, one may ensure that $f$ is a timelike immersion of dimension $2$ containing any two pair of points in $T^{-1}(\Omega)$. The map $f$ is clearly inextendible in $T^{-1}(\Omega)$, hence the result.
\end{proof}

\noindent Notice that in the previous proof,~$f$ seen as a map from $\tilde{U}$ to $\R^{p,q}$ is not inextendible in~$\R^{p,q}$ as the path $c : s \in (0,1)\rightarrow (t,s) \in \tilde{U}$ is inextendible in $\tilde{U}$ while $f \circ c$ converges in $\R^{p,q}$. It also cannot be extended into an inextendible timelike map, since it is not the graph of a map from 
a subset of~$\R^q$ into~$\R^p$. To end this subsection, we will give a property on open subspaces~$\Omega$ of~$\R^{p,q}$ which 
guarantees that~$\Omega$ and~$\R^{p,q}$ share the same notion of causality.

\begin{prop}
    Let~$U$ be an open subset of $\R^q$. The set~$\Omega = T^{-1}(U)$ has the same causality as~$\R^{p,q}$ if~$U$ is convex in~$\R^q$.
\end{prop}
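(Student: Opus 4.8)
The plan is to unwind the phrase "same causality" through the graph description of inextendible causal maps. Recall that in $\R^{p,q}$ a subset $E$ is in causal (resp. timelike) position exactly when it is the graph of a $1$-Lipschitz (resp. strictly $1$-Lipschitz) map from $T(E)\subset\R^q$ to $\R^p$ for the Euclidean distance (Propositions \ref{flatgeodesics} and \ref{timeposition}, Kirszbraun providing the extension). So one must compare this with the analogous statement computed inside $\Omega=T^{-1}(U)$. The two features that can go wrong are: an inextendible causal map of $\Omega$ only controls the \emph{intrinsic} (length) metric of $U$, not the Euclidean one; and its domain need not be a graph over $U$ unless $U$ is simply connected. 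Both pathologies are killed precisely when $U$ is convex, since a convex open $U$ is simply connected and its length metric coincides with the Euclidean restriction.

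($\Leftarrow$) Assume $U$ convex. One inclusion is free and uses nothing: if $E\subset\Omega$ is in causal (resp. timelike) position in $\R^{p,q}$, take an inextendible causal (resp. timelike) immersion of $\R^{p,q}$ through $E$ — a graph over $\R^q$ — and restrict it to $T^{-1}(U)=\Omega$; the restriction is still causal (resp. timelike), and it is inextendible in $\Omega$ because an inextendible path of $U$ leaves every compact of $U$, so its graph leaves every compact of $\Omega$. For the converse inclusion, let $f:N_0\to\Omega$ be an inextendible causal (resp. timelike) map whose image contains $E$. Arguing exactly as in the proof of Proposition \ref{EuclidianGH} (using $g_-<g$ in the causal case, the completeness of $\R^{p,q}$, and the inextendibility of $f$ inside $\Omega$), the map $T\circ f:N_0\to U$ is a distance-nondecreasing local diffeomorphism with the path-lifting property, hence a covering of $U$; since $U$ is convex it is simply connected, so $T\circ f$ is a diffeomorphism onto $U$ and $f$ is the graph of a map $g:U\to\R^p$ which is $1$-Lipschitz (resp. strictly $1$-Lipschitz) for the length metric of $U$. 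By convexity that length metric is the Euclidean one, so $g$ is $1$-Lipschitz (resp. strictly $1$-Lipschitz) for $d_{\R^q}$; restricting $g$ to $T(E)$ and extending to $\R^q$ by Kirszbraun then shows that $E$ is in causal (resp. timelike) position in $\R^{p,q}$.

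($\Rightarrow$) Conversely, if $U$ is not convex one exhibits a pair of points of $\Omega$ in causal (or timelike) position in $\Omega$ but not in $\R^{p,q}$: choose $a,b\in U$ with $[a,b]\not\subset U$ and build a causal immersion into $\Omega$ — defined over $U$, or over its universal cover as in the Example preceding this proposition when $U$ fails to be simply connected — realizing between the fibers over $a$ and over $b$ an $\R^p$-displacement at least $\|a-b\|$ (the extra room comes from the detour the immersion is forced to make around the part of $[a,b]$ missing from $U$). The two resulting points $x=(g(a),a)$, $y=(g(b),b)$ are in causal (resp. timelike) position in $\Omega$ while the segment $[x,y]$ is not causal (resp. timelike), hence they are not in that position in $\R^{p,q}$ by Proposition \ref{flatgeodesics}.

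The delicate step is the converse inclusion of the "if" direction: one must show every inextendible causal map of $\Omega$ is \emph{globally} the graph of a map over $U$. This is precisely the covering argument of Proposition \ref{EuclidianGH} transplanted from $\R^{p,q}$ to $\Omega$ — the inextendibility hypothesis being what forces the lifted curves not to escape $N_0$ — and then combined with the simple connectedness supplied by convexity. Once the map is a graph, the only remaining geometric input is the elementary but essential observation that a convex open subset of $\R^q$ is a length space isometric to its Euclidean restriction; without convexity the intrinsic detours inside $U$ are exactly what makes the two causalities diverge.
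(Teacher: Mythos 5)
Your proposal is correct and follows essentially the same route as the paper: the forward inclusion of the ``if'' direction is the covering argument of Proposition \ref{EuclidianGH} run inside $\Omega$, combined with the two uses of convexity (simple connectedness, and intrinsic length metric of $U$ equal to the Euclidean one) and a Kirszbraun extension as in \ref{timeposition}, while the ``only if'' direction is the same construction of a map with $\|df\|<1$ that fails to be $1$-contracting for the Euclidean distance. Your write-up is in fact slightly more complete than the paper's, since you also check the easy inclusion (restrictions of global graphs remain inextendible in $\Omega$) and explicitly invoke the universal-cover spiral when $U$ is not simply connected, a case the paper's one-line converse leaves implicit.
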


\begin{proof}
    Assume~$U$ is convex and let~$\tilde{f} \colon N_0 \rightarrow \Omega$ be an inextendible timelike immersion. 
    For the same reasons as before,~$T \circ \tilde{f}$ must be both surjective and a covering on~$U$. However since~$U$ is convex it 
    must be simply connected, meaning that $T \circ f$ is a diffeomorphism and~$\tilde{f}$ can be seen as the graph of a map~$f \colon U \rightarrow \R^p$ which verfies that for all 
   ~$x \in U$,~$||df_x|| < 1$. This means that for each smooth path~$c$ of~$U$, 

    \[L(f(c)) < L(c).\]

    \noindent Since~$U$ is convex, for each~$x,y \in U$, the segment~$[x,y]$ is in~$U$ and minimises the length of the curves~$c$ from~$x$ to~$y$, 
    meaning that~$L([x,y]) = d(x,y)$. This means that we must have

    \[d(f(x), f(y)) < d(x,y) \]

    \noindent for each~$x,y \in U$, meaning that~$f$ is a~$1$-contracting map defined on~$U \subset \R^q$. By \ref{timeposition}, 
    this means that~$f$ can be extended into a~$1$-contracting map defined on the whole of~$\R^q$, hence the result.

    Conversely, assume that~$U$ is not convex. It will always be possible to build a map 
   ~$f \colon U \rightarrow \R^p$ which verifies~$\forall x \in U, ||df_x|| < 1$ but is not~$1$-contracting.
\end{proof}

\section{Global hyperbolicity in higher signature}

\subsection{Cauchy surfaces}

This section will be dedicated to a generalization 
of the notion of global hyperbolicity to the context of pseudo-Riemannian spacetimes 
of higher signature. In Lorentzian geometry, being globally hyperbolic can be characterized 
by each one of those three equivalent properties :

\begin{enumerate}
    \item The spacetime~$M$ admits a Cauchy surface, 
    \item The spacetime $M$ admits a Cauchy time function~$T \colon M \rightarrow \R$, 
    \item For each~$x,y \in M$, the space~$\Cc(x,y)$ of non-parametrized future-oriented causal curves from~$x$ to~$y$ is compact for the uniform topology.
\end{enumerate}

\noindent We are going to start by generalizing the notion of Cauchy surface.

\begin{defn}
    Let~$M$ be a spacetime of signature~$(p,q)$. A subset~$S$ of~$M$ is said to be a \emph{Cauchy surface} if for each 
    inextendible causal map~$f \colon N_0 \rightarrow M$ of dimension~$q$, there exists a unique~$x \in N_0$ such that~$f(x) \in S$.
\end{defn}

\begin{rem}
    In Lorentzian geometry, the usual definition for a Cauchy surface only requires that every inextendible timelike curve meets the set~$S$ exactly once, thus allowing the existence of Cauchy surfaces containing parts of lightlike geodesics. For our generalization, we will restrict ourselves to acausal Cauchy surfaces.
\end{rem}

\noindent We have already seen an example of Cauchy surfaces in the pseudo-Euclidian case in Proposition \ref{EuclidianGH}.

\begin{prop}
    Let~$S = \R^{p,0} \subset \R^{p,q}$. Then~$S$ is a Cauchy surface of~$\R^{p,q}$.
\end{prop}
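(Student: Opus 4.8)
The statement to prove is that $S = \R^{p,0} \subset \R^{p,q}$ is a Cauchy surface, meaning every inextendible causal immersion $f : N_0 \to \R^{p,q}$ of dimension $q$ meets $S$ in exactly one point. The plan is to reduce everything to the structure theorem already established in Proposition \ref{EuclidianGH} (and its causal analogue): any inextendible causal immersion $f$ is the graph of a $1$-Lipschitz map $h : \R^q \to \R^p$, with $T \circ f : N_0 \to \R^q$ a diffeomorphism, where $T$ is the projection onto the negative coordinates. Under the identification $N_0 \simeq \R^q$ via $T \circ f$, a point $f(x) = (h(y), y)$ lies in $S = \R^{p,0} = \{\text{negative coordinates vanish}\}$ if and only if $y = 0$. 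So the claim reduces to the trivial observation that $0$ is the unique point of $\R^q$ at which $T \circ f$ takes the value $0$, which is immediate from bijectivity of $T \circ f$.

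Concretely, I would proceed in three short steps. First, invoke Proposition \ref{EuclidianGH} (causal version) to write an arbitrary inextendible causal immersion $f$ as the graph of a $1$-Lipschitz $h : \R^q \to \R^p$, so that $f(y) = (h(y), y)$ after identifying $N_0$ with $\R^q$. Second, observe that $S = \R^{p,0}$ is precisely $T^{-1}(0)$, so $f(y) \in S \iff y = 0$; since $T \circ f$ is a bijection onto $\R^q$, there is exactly one such $y$, namely the preimage of $0$. Third, note that since every inextendible causal immersion in $\R^{p,q}$ arises this way (including in the merely continuous case, by the approximation/extension results of section 3.3), the argument covers all cases in the definition of Cauchy surface. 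Existence and uniqueness are thus both settled simultaneously.

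There is no serious obstacle here: the work has all been done in Proposition \ref{EuclidianGH}. The only mild point requiring care is making sure the definition of Cauchy surface is being applied with the right class of maps --- inextendible causal immersions (or continuous causal maps) of dimension $q$ --- and that the structure theorem indeed applies to all of them; but this is exactly what the causal refinement of Proposition \ref{EuclidianGH} provides, and in the continuous case one either cites the continuous version of the graph description (noted in the remark following \ref{EuclidianGH}) or approximates. So the proof is essentially a one-line corollary: $S = T^{-1}(0)$ and $T \circ f$ is a diffeomorphism onto $\R^q$, hence $|f^{-1}(S)| = |(T\circ f)^{-1}(0)| = 1$.
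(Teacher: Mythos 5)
Your proposal is correct and follows exactly the route the paper intends: the paper offers no separate argument for this proposition but simply points back to Proposition \ref{EuclidianGH}, since once every inextendible causal map is known to be the graph of a $1$-Lipschitz map over $\R^q$ (with $T \circ f$ a bijection, also in the continuous case by the remark following \ref{EuclidianGH}), the intersection with $S = T^{-1}(0)$ is exactly the unique preimage of $0$. Your handling of the continuous versus smooth distinction is consistent with the paper's reduction to smooth immersions, so nothing is missing.
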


\noindent This result is comforting as the Lorentzian Minkowski space is the basic example of a globally hyperbolic Lorentzian space. We can actually exhibit more Cauchy surfaces in~$\R^{p,q}$.

\begin{prop}
    Let $g : \R^p \rightarrow \R^q$ be a $k$-Lipschitz map with $k < 1$. The graph of $g$ in $\R^{p,q}$ is a Cauchy surface.
\end{prop}

\begin{proof}
    Any inextendible causal map is the graph of a $1$-Lipschitz map from $\R^q$ to $\R^p$. Let $f$ be such a map. The graph of $f$ inersects the graph of $g$ in a point $(a,b)$ if and only if there exists a point $x \in \R^p$ and a point $y \in \R^q$ such that $(a,b) = (x, g(x)) = (f(y),y)$. This happens if and only if $x = f(y)$ and $g(x) = y$, meaning that $g(f(y)) = y$. Therefore there is a correspondance between the intersections of the graphs of $f$ and $g$ and the fixed points of the map $g \circ f : \R^q \rightarrow \R^q$. However $g \circ f$ is $k$-Lipschitz with $k<1$ and $\R^q$ is complete, thus it must have exactly one fixed point. Therefore the graph of $f$ intersects the graph of $g$ in exactly one point, hence the result.
\end{proof}

\noindent In Lorentzian geometry, every non-causal spacetime automatically fails to be globally hyperbolic. Indeed, a timelike loop~$c \colon \s^1 \rightarrow M$ can be seen as an inextendible periodic timelike path~$\hat{c} \colon \R \rightarrow M$, if it encounters a subset of $M$, it does so an infinite number of times, hence the result. Mimicking this, we define the following notion of causal pseudo-Riemannian space:

\begin{defn}
    A spacetime of signature~$(p,q)$ is said to be \emph{causal} if there does not exist an inextendible causal map~$f \colon N_0 \rightarrow M$ of dimension~$q$ where~$N_0$ is not simply connected.
\end{defn}

\begin{lemma}
    If~$M$ admits a Cauchy surface~$S$, it is causal.
\end{lemma}

\begin{proof}
    Assume there exists an inextendible causal map~$f \colon N_0 \rightarrow M$ where~$N_0$ is not simply connected and let~$\tilde{f} \colon \tilde{N_0} \rightarrow M$ be the map induced by~$f$ on the universal covering~$\tilde{N_0}$ of~$N_0$. Since~$\tilde{f}$ is also an inextendible causal map, there exists a point~$x_1$ in~$\tilde{N_0}$ such that~$\tilde{f}(x_1) = f(\pi(x_1)) \in S$. However since~$N_0$ is not simply connected, there exists a point~$x_2 \neq x_1$ such that~$\pi(x_1) = \pi(x_2)$, meaning that~$\tilde{f}(x_1) = \tilde{f}(x_2)$. The point~$x \in \tilde{N_0}$ such that~$\tilde{f}(x) \in S$ is supposed to be unique, hence the result.
\end{proof}

\noindent In most cases, it will be easier to check that a subset~$S \subset M$ is a Cauchy surface by verifying the property 
on smooth inextendible causal immersion of dimension $q$. Let us prove that this is enough to get the property on continuous inextendible causal maps. For this, we will need the following lemma:

\begin{lemma}
    Let~$S \subset M$ be a subset verifying the Cauchy property for smooth maps. Then~$S$ is closed in~$M$.
\end{lemma}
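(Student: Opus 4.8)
The plan is to show the complement $M \setminus S$ is open, by ruling out the existence of a point $x \in \partial S$ with $x \notin S$. The key tool is the fact, established earlier in the excerpt (property $(A)$ and the local structure lemmas), that through any point there passes an inextendible timelike immersion, and moreover that such an immersion can be locally deformed inside a cylindrical neighborhood while remaining inextendible timelike.

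First I would suppose, for contradiction, that $x \in \overline{S} \setminus S$. Using property $(A)$, pick an inextendible smooth timelike immersion $f : N_0 \rightarrow M$ of dimension $q$ with $y \in N_0$ such that $f(y) = x$. Next, take a cylindrical neighborhood $U = C(a,b)$ centered on $x$ as furnished by the local structure results; inside $U$ the immersion $f$ is the graph of a map $\hat{f} : \D^q(0,b) \rightarrow (-a,a)^p$ taking values in $\{g|_x < 0\}$. Since $x$ is in the closure of $S$ but not in $S$, the set $S \cap U$ is nonempty and accumulates at $x$; in particular there exists a point $s \in S \cap U$ arbitrarily close to $x$. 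Now I would perform a small compactly supported perturbation of $\hat{f}$ near the center: replace $\hat{f}$ by $\hat{f}'$ which still takes values in the timelike cone $\{g|_x < 0\}$, still agrees with $\hat{f}$ near the boundary of $\D^q(0,b)$ (so the global immersion stays unchanged outside $f^{-1}(U)$ and hence stays inextendible), but whose graph now passes through the point $s \in S$. This produces an inextendible smooth timelike immersion $\overline{f}$ whose image meets $S$ both at $s$ and at the (unchanged) point $f(y') \in S$ coming from the original $f$ — wait, one must be careful here: the original $f$ need not meet $S$ at all near $x$, but since $f$ is inextendible it meets $S$ somewhere, and the perturbation leaves that intersection point untouched while adding a new one at $s$. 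Thus $\overline{f}$ meets $S$ in at least two distinct points, contradicting the Cauchy property for smooth immersions.

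**The main obstacle** I anticipate is the deformation step: one needs that a timelike immersion, viewed as a $1$-contracting graph $\hat{f}$ over the disk, can be perturbed with compact support to pass through a prescribed nearby point while staying $1$-contracting (equivalently, timelike) and agreeing with $\hat{f}$ outside a small ball. This is plausible because the timelike cone is open and the Lipschitz constant of $\hat{f}$ at $x$ is strictly less than $1$, leaving room to add a small bump; but making this precise — choosing the bump function's amplitude small enough relative to the gap $1 - \|d\hat{f}\|$ and the distance from $s$ to the graph — requires a careful but routine estimate, essentially the same one used in the earlier approximation propositions. The subtlety that $s$ lies on $S$ but the graph of $\hat{f}$ a priori does not pass through $s$ is handled by translating the graph vertically by the (small) displacement needed, localized via the bump; since $s$ and $x$ are both in $U$ and $U$ can be taken as small as we like, the required displacement is small and the construction goes through.

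A cleaner alternative, which I would mention as a variant, is to use the already-proven approximation lemma for continuous inextendible causal maps: first build a \emph{continuous} inextendible timelike map through $x$ whose image contains a point of $S$ near $x$ and also meets $S$ elsewhere (using that $x \in \overline{S}$, one can run a short timelike arc from a point of $S$ near $x$, then continue inextendibly), then invoke the approximation result to replace it by a smooth inextendible timelike immersion staying within $\delta$ of it, choosing $\delta$ small enough that the two intersection points with $S$ are preserved as two \emph{distinct} intersections — but this last point again needs $S$ to be, say, locally the graph of something, which is exactly what we are trying to avoid assuming, so the direct perturbation argument above is the honest route.
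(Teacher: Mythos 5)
Your proposal is correct and follows essentially the same route as the paper's own proof: take a point of $\overline{S}\setminus S$, run an inextendible smooth timelike immersion through it, use the Cauchy property to get its unique intersection with $S$, then perform a compactly supported local deformation (kept away from that intersection point) so the immersion also passes through a nearby point of $S$, contradicting uniqueness. You simply spell out the graph/bump estimate in the cylindrical neighborhood more explicitly than the paper does, but the argument is the same.
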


\begin{proof}
    Assume that~$S$ is not closed and let~$y \in \overline{S} \setminus S$. Let~$f \colon N_0 \rightarrow M$ be a smooth inextendible timelike map 
    such that there exists~$x \in N_0$ with~$f(x) =y$. There must exist a unique~$x_1 \in N_0$ such that~$f(x_1) \in S$. However, by using lemma \ref{timelikedeformation}, one can slightly deform~$f$ into an inextendible timelike immersion~$\tilde{f}$ such that~$\tilde{f}(x) \in S$ and~$\tilde{f} = f$ outside of a small neighborhood around~$x$. The map~$\tilde{f}$ thus verifies that~$\tilde{f}(x_1)$ and~$\tilde{f}(x)$ are both in~$S$ which is impossible, hence the result.
\end{proof}

\begin{prop}
    Let~$S \subset M$ be a subset verifying the Cauchy property for smooth maps. Then~$S$ also verifies the property for continuous maps, meaning that~$S$ is a Cauchy surface.
\end{prop}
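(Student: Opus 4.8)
The plan is to upgrade the Cauchy property from smooth inextendible causal immersions to continuous inextendible causal maps by an approximation argument, using the two facts already available: that $S$ is closed in $M$ (the preceding lemma) and that every continuous inextendible causal map can be uniformly approximated by smooth inextendible causal immersions (the lemma at the end of section 3.3). So let $f : N_0 \rightarrow M$ be a continuous inextendible causal map; I must produce a unique $x \in N_0$ with $f(x) \in S$.

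\textbf{Existence.} First I would fix a Riemannian distance $d$ on $M$ and a coercive positive function on $N_0$, and invoke the approximation lemma to get a sequence of smooth inextendible causal immersions $f_n : N_0 \rightarrow M$ with $d(f_n(y), f(y)) \to 0$ uniformly on compacts (more precisely, $d(f_n, f) < \delta_n$ for a sequence $\delta_n$ shrinking to $0$ and decaying at infinity on $N_0$). For each $n$, the smooth Cauchy property gives a unique $x_n \in N_0$ with $f_n(x_n) \in S$. The key claim is that $(x_n)$ stays in a compact subset of $N_0$: if it escaped every compact, then along a subsequence $x_n$ would run off to the "boundary" of $N_0$, and — because $f$ is inextendible and $f_n$ is uniformly close to $f$ with the gap vanishing at infinity — $f_n(x_n)$ would have to leave every compact of $M$, contradicting $f_n(x_n) \in S$ only if one also argues $S$ need not be compact; so instead I extract a convergent subsequence of $x_n$ directly once I know it is bounded. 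To see boundedness, note that if $x_n \to \infty$ in $N_0$ along a subsequence, pick an inextendible path $c$ in $N_0$ through infinitely many of the $x_n$; then $f\circ c$ is inextendible in $M$, hence leaves every compact, and the uniform-with-decay estimate forces $f_n(x_n) = f_n(c(t_n))$ to leave every compact of $M$ as well, while $f_n(x_n) \in S$; this gives a contradiction by the same escape-of-compact reasoning used in the inextendibility lemma, so $(x_n)$ is bounded. Passing to a convergent subsequence $x_n \to x_\infty$, uniform convergence gives $f_n(x_n) \to f(x_\infty)$, and since each $f_n(x_n) \in S$ and $S$ is closed, $f(x_\infty) \in S$.

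\textbf{Uniqueness.} Suppose $x_1 \neq x_2$ in $N_0$ both have $f(x_i) \in S$. Choose disjoint open neighborhoods $U_1 \ni x_1$ and $U_2 \ni x_2$ in $N_0$. The point is that a small perturbation of $f$ supported near $x_1$ and $x_2$ can be arranged to meet $S$ inside \emph{both} $U_1$ and $U_2$: indeed, take cylindrical neighborhoods of $f(x_1), f(x_2)$ and deform $f$ slightly — as in the proof of the closedness lemma — so that the deformed map $\tilde f$, which is still a smooth inextendible causal immersion, satisfies $\tilde f(x_1) \in S$ and $\tilde f(x_2) \in S$ (here one uses that $f(x_i) \in S$ already, so no actual jump is needed, only that the smoothing near $x_i$ can be taken to fix the value $f(x_i)$; the approximation lemma's remark allows matching $f$ at finitely many points). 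Then $\tilde f$ meets $S$ in at least two distinct points $x_1, x_2$, contradicting the smooth Cauchy property.

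\textbf{Main obstacle.} The delicate point is the existence half — specifically, proving that the sequence $(x_n)$ does not escape to infinity in $N_0$. This is where inextendibility of $f$ must be used in an essential way, and the argument must be careful because $S$ itself need not be compact, so one cannot simply say "$f_n(x_n) \in S$ implies $f_n(x_n)$ is bounded." The correct leverage is the quantitative approximation (the gap $\delta$ decaying at infinity on $N_0$), exactly as in the inextendibility lemma already proved: it lets one transfer non-convergence of $x_n$ in $N_0$ to non-convergence of $f\circ(\text{path through the }x_n)$ in $M$, contradicting inextendibility. Once boundedness is secured, both existence and uniqueness follow routinely from closedness of $S$ and the smooth Cauchy property. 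I would also remark afterwards that, combined with the earlier lemma, this shows the notions of Cauchy surface defined via smooth versus continuous inextendible causal maps coincide.
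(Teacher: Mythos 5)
Your uniqueness argument is essentially the paper's: approximate $f$ by a smooth inextendible causal immersion that agrees with $f$ at the two points $x_1,x_2$ (using the remark that the approximation can be made to match $f$ on a finite set), and contradict the smooth Cauchy property. That half is fine. The existence half, however, takes a different route from the paper and contains a genuine gap. You try to produce the intersection point directly, as a limit of the points $x_n$ where the approximating immersions $f_n$ meet $S$, and the whole argument hinges on the claim that $(x_n)$ stays in a compact subset of $N_0$. Your justification of this is incorrect on two counts. First, in this paper ``inextendible'' means only that $f\circ c$ does not \emph{converge} for any inextendible path $c$ in $N_0$ (definition \ref{inextendiblepath}); it does not mean that $f\circ c$ eventually leaves every compact set of $M$, so the step ``$f\circ c$ is inextendible, hence leaves every compact'' is unjustified (the image may accumulate on a compact set without converging). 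Second, even if $f_n(x_n)$ did leave every compact of $M$, this would not contradict $f_n(x_n)\in S$, since $S$ is closed but need not be compact --- an objection you raise yourself and then do not actually resolve. As written, boundedness of $(x_n)$ is not established, and without it the limit point $x_\infty$ may simply fail to exist in $N_0$ (the relevant point of $S$ could lie in the closure of $\mathrm{Im}(f)$ without being attained).

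The paper sidesteps this difficulty entirely by proving existence by contradiction rather than by a limiting construction: if $\mathrm{Im}(f)$ misses $S$, then since $S$ is closed (the preceding lemma), the function $\delta(x) = d(f(x),S)$ is a \emph{positive} continuous function on $N_0$, and the approximation lemma with this $\delta$ produces a smooth inextendible causal immersion whose image also misses $S$, contradicting the smooth Cauchy property. No compactness or extraction of subsequences is needed. I recommend replacing your existence argument by this one (or, if you insist on the direct limiting approach, you must supply a genuine proof that $(x_n)$ cannot escape to infinity in $N_0$, which is precisely the nontrivial content your sketch assumes).
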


\begin{proof}
    Let~$f \colon N_0 \rightarrow M$ be an inextendible continuous causal map and let's assume the image of~$f$ does not intersect~$S$. Since~$M \setminus S$ is open, one can uniformaly approximate~$f$ by a smooth inextendible causal immersion~$f_1$ of dimension $q$ which also does not meet~$S$, which is impossible. Hence, there must exist a~$x$ in~$N_0$ such that~$f(x) \in S$.

    \indent Assume now that there exists two distinct points~$x_1$ and~$x_2$ in~$N_0$ such that~$f(x_1)$ and~$x(x_2)$ are both in~$S$. Then one can uniformaly approximate~$f$ by a smooth inextendible causal immersion~$f_2$ of dimension $q$ such that~$f_2(x_1) = f(x_1)$ and~$f_2(x_2) = f(x_2)$ which are both in~$S$. This is impossible as~$S$ verifies the Cauchy property for smooth maps, hence the result.
\end{proof}

\subsection{Structure of Cauchy surfaces}

One interesting property about Cauchy hypersurfaces in Lorentzian geometry is that their topology is determined up to homeomorphism (or diffeomorphism when both surfaces are smooth). The proof of this result relies on the existence of timelike vector fields on Lorentzian spaces, which are always integrable. In signature~$(p,q)$, there always exists timelike distributions of~$q$-vector spaces, however those distributions sometimes fail to be integrable. Examples of spacetimes which do not admit a timelike foliation of dimension~$q$ will be given later. We will prove first a few results under the assumption of the existence of such a foliation.

\begin{prop}
    Let~$\pazocal{F}$ be a timelike (resp. causal) foliation of dimension~$q$ on a spacetime~$M$. Every leaf of~$\pazocal{F}$ is an inextendible timelike (resp. causal) immersion of dimension~$q$.
\end{prop}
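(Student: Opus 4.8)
The plan is to show that a leaf $L$ of a timelike (resp.\ causal) foliation $\pazocal{F}$ is inextendible by contradiction: if not, then some inextendible path $c:I\to L$ has $f\circ c$ (for $f:L\hookrightarrow M$ the inclusion) converging in $M$ to a point $\ell$, and I would derive a contradiction with the fact that $\pazocal{F}$ is a \emph{foliation} near $\ell$. First I would recall (from the definition of inextendibility in Section 3.2) that it suffices to produce, for each inextendible path $c:I\to L$, a demonstration that the image $f\circ c$ does not converge in $M$; equivalently, that $L$ is a closed subset of $M$ without ``missing boundary points''. Since each leaf of a foliation is an immersed submanifold, the only way a leaf can fail to be inextendible is if it accumulates onto itself or onto another leaf in a way that a path running off the end of $L$ has its image converging in $M$.

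The key step is to use a foliated chart around the would-be limit point $\ell$. Suppose $c:(a,b)\to L$ is inextendible in $L$ (so $c$ leaves every compact of $L$ as $t\to b$) but $f(c(t))\to\ell\in M$. Take a foliated chart $\varphi:W\to (-1,1)^q\times(-1,1)^p$ around $\ell$ in which the plaques of $\pazocal{F}$ are the slices $(-1,1)^q\times\{*\}$. For $t$ close enough to $b$, $f(c(t))\in W$, so $f(c(t))$ lies in a plaque of $\pazocal{F}\cap W$; since plaques are pieces of leaves, $c(t)$ lies in $L$ and the local leaf through $f(c(t))$ is contained in $L$. The plaque $P_\ell$ through $\ell$ is relatively compact in $W$ and is an open subset of its leaf; if infinitely many $f(c(t_n))$ lie in $P_\ell$ then $c(t_n)\to$ a point of $L$ (the preimage of $\ell$ under the immersion restricted to that plaque), contradicting that $c$ leaves every compact. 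If instead the $f(c(t))$ lie in \emph{different} plaques approaching $P_\ell$, those plaques belong to leaves that accumulate onto $L$'s leaf; but $P_\ell\subset L$ forces, by the chart structure, that all nearby plaques through points of $f\circ c$ also belong to $L$, so again $c(t)$ converges in $L$ — the contradiction. Finally, that each leaf is timelike (resp.\ causal) of dimension $q$ is immediate: the tangent space to a leaf at $x$ is $\pazocal{F}(x)$, which by hypothesis is a timelike (resp.\ causal) $q$-plane, so the inclusion $f:L\to M$ is a timelike (resp.\ causal) immersion of dimension $q$.

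The main obstacle is the step ruling out that $f\circ c$ converges while $c$ runs off to the end of $L$ in the ``accumulating plaques'' scenario: one must argue carefully that the local structure of a foliation forces a convergent sequence of points of one leaf, lying in a shrinking neighbourhood of a point $\ell$, to actually converge \emph{within that same leaf}. This is where the distinction between an abstract immersed submanifold (which can fail this) and a leaf of a genuine foliation (which cannot, locally) is essential, and it is the only place where honest topological care is needed; everything else is a direct unwinding of the definitions.
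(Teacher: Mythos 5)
Your overall route (work in a foliated chart around the would-be limit point $\ell$) is the right one and is the same in spirit as the paper's proof, but the crucial step — the one you yourself flag as the main obstacle — is handled with an argument that does not work. In your second case ("the $f(c(t))$ lie in different plaques approaching $P_\ell$") you assert that $P_\ell\subset L$ and that "all nearby plaques through points of $f\circ c$ also belong to $L$, so again $c(t)$ converges in $L$". First, $P_\ell\subset L$ is unjustified there: a plaque is contained in $L$ only if it meets $L$, and in this case no point of the path is assumed to lie in $P_\ell$, so $\ell$ could a priori sit on a different leaf. Second, even granting that every plaque through a point of $f\circ c$ lies in $L$ (which is true, since those plaques meet $L$), the conclusion does not follow: two points of $L$ lying in different plaques of the same chart are \emph{not} close in the leaf topology, so a sequence of path points scattered over infinitely many plaques of $L$ accumulating on $P_\ell$ in $M$ gives no convergence in $L$ whatsoever. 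There is also a smaller but real issue in your setup: you replace the paper's definition of an inextendible path ($c$ does not converge in $N_0$) by "$c$ leaves every compact of $L$", which is strictly stronger; consequently your first case only produces subsequential convergence $c(t_n)\to$ a point of $L$, which contradicts the compact-exhaustion property but not the actual hypothesis of non-convergence.

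Both defects are repaired by the one argument you are missing, which simultaneously shows your second case is vacuous and upgrades the first case to genuine convergence. Since $f(c(t))\to\ell$, there is $t_0$ with $f(c(t))\in W$ for all $t>t_0$. The plaques of $W$ are disjoint open subsets of the leaves for the leaf topology, and $c$ is continuous as a map into $L$ with its leaf topology, so the preimages under $c$ of the plaques form a disjoint open cover of the connected interval $(t_0,b)$; hence the whole tail of $c$ lies in a \emph{single} plaque $P$. The transverse coordinate of $P$ is then the limit of the transverse coordinates of $f(c(t))$, i.e.\ that of $\ell$, so $\ell\in P$, and since $P$ meets $L$ it is a plaque of $L$; finally the leaf coordinates of $c(t)$ converge to those of $\ell$ inside the embedded plaque $P$, so $c(t)\to\ell$ in the leaf topology of $L$, contradicting the inextendibility of $c$ in $L$. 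This is exactly the content behind the paper's (very terse) proof, which invokes local integrability of the distribution around $\ell$ to conclude that $\ell$ lies on the leaf; without the single-plaque argument the statement "a limit point of the leaf lies on the leaf" would be false in general (think of a non-closed or dense leaf), so this is the step that must appear explicitly. The final observation of your proposal — that the inclusion of a leaf is a timelike (resp.\ causal) immersion of dimension $q$ because its tangent space at each point is the distribution plane — is correct and matches the paper.
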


\begin{proof}
    Let~$F$ be a leaf of~$\pazocal{F}$. The inclusion~$i \colon F \rightarrow M$ is a timelike (resp. causal) embedding of~$F$ into~$M$. Let~$D$ be the distribution associated with~$\pazocal{F}$ and assume there exists a path~$c \colon (a,b) \rightarrow F$ and a point~$x$ in~$M \setminus F$ such that~$c(t) \longrightarrow_{t \rightarrow b} x$. The distribution~$D$ is locally integrable around~$x$ which is in the closure of~$F$, meaning that~$x$ must also be in the leaf~$F$, hence the result.
\end{proof}

\begin{defn}
    A foliation is said to be \emph{regular} if there exists a submersion~$\varphi \colon M \rightarrow P$ such that the leaves of~$\pazocal{F}$ are exactly the level sets of~$\varphi$.
\end{defn}

\begin{prop}
    Let~$M$ be a~$(p,q)$-spacetime admitting a smooth Cauchy surface~$S$ and a causal foliation~$\pazocal{F}$ of dimension $q$. Then~$\pazocal{F}$ is a regular foliation and $S$ is homeomorphic to $M/\pazocal{F}$.
\end{prop}

\begin{proof}
    Let~$x \in M$ and let~$F$ be the leaf of~$\pazocal{F}$ containing~$x$. Since~$F$ is an inextendible causal immersion of dimension~$q$, there exists a unique point~$\varphi(x)$ both in~$F$ and in~$S$. The map~$x \in M \mapsto \varphi(x) \in S$ is a smooth submersion for which the level sets are exactly the leaves of~$\pazocal{F}$, hence the result.
\end{proof}

\begin{prop}
    Let~$M$ be a~$(p,q)$-spacetime with a causal foliation~$\pazocal{F}$ of dimension~$q$ and two Cauchy surfaces~$S_1$ and~$S_2$. Then~$S_1$ and~$S_2$ are homeomorphic.
\end{prop}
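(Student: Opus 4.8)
The strategy is to use the foliation $F$ to build mutually inverse continuous maps between $S_1$ and $S_2$, so that the heart of the matter becomes the continuity of the ``leaf projection''. By the proposition stating that every leaf of a timelike foliation of dimension $q$ is an inextendible timelike immersion, together with the defining property of a Cauchy surface, the leaf $F_x$ through any point $x \in M$ meets $S_1$ in exactly one point and $S_2$ in exactly one point. I would define $\varphi_i \colon M \to S_i$ ($i=1,2$) by letting $\varphi_i(x)$ be the unique point of $F_x \cap S_i$, and set $\Phi = \varphi_2|_{S_1} \colon S_1 \to S_2$ and $\Psi = \varphi_1|_{S_2} \colon S_2 \to S_1$.

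First I would check that $\Phi$ and $\Psi$ are inverse bijections. If $s_1 \in S_1$ and $s_2 = \Phi(s_1)$, then $s_2$ lies on the leaf $F_{s_1}$, hence $F_{s_2}=F_{s_1}$; therefore $\Psi(s_2)$ is the unique point of $F_{s_1}\cap S_1$, which is $s_1$ because $s_1$ itself lies in $F_{s_1}\cap S_1$. Thus
\[
\Psi\circ\Phi=\mathrm{id}_{S_1},\qquad \Phi\circ\Psi=\mathrm{id}_{S_2},
\]
the second identity by the symmetric argument. In particular $\Phi$ is a bijection, and it will be a homeomorphism as soon as $\varphi_1$ and $\varphi_2$ are known to be continuous, since a continuous bijection with continuous inverse is a homeomorphism.

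It therefore remains to prove that $\varphi_1$ and $\varphi_2$ are continuous, and this is the main obstacle. I would argue locally around a point $s \in S_i$: choose a foliation box $U$ contained in a cylindrical neighbourhood $C(a,b)=(-a,a)^p\times\mathbb{D}^q(0,b)$ on which $g_- < g < g_+$. Because a plaque of $F$ in $U$ is timelike for $g$ it is timelike for $g_-$, so, by the graph description underlying Propositions \ref{flatgeodesics} and \ref{geodesics}, it is the graph of a $1$-contracting map from an open subset of $\mathbb{D}^q(0,b)$ into $(-a,a)^p$; and since no leaf meets $S_i$ twice, $S_i\cap U$ is the graph of a strictly $1$-Lipschitz map from an open subset of $(-a,a)^p$ into $\mathbb{D}^q(0,b)$, in particular continuous. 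With both objects written as graphs in the same product chart, $\varphi_i(x)$ is obtained as the unique fixed point of the composition of a $1$-contraction with a strict $1$-Lipschitz map — exactly as in the proof of the proposition identifying the Cauchy surfaces of $\R^{p,q}$ — and this fixed point depends continuously on $x$ through the continuous dependence of the plaque of $F$ on its base point. Patching these local pictures, and using that $S_i$ is closed in $M$ (so $\varphi_i(x)$ cannot escape to the boundary of a chart), yields continuity of $\varphi_i$ on all of $M$; equivalently, one may first record that $F$ is a regular foliation — the argument of the preceding ``regular foliation'' proposition goes through with a continuous Cauchy surface in place of a smooth one — and then $\varphi_i$ factors through the leaf space as a continuous map. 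Combining everything, $\Phi\colon S_1\to S_2$ is a continuous bijection with continuous inverse $\Psi$, hence $S_1$ and $S_2$ are homeomorphic. The delicate point is precisely this continuity step: since $S_1$ and $S_2$ are only assumed to be Cauchy surfaces rather than smooth submanifolds, transversality of $S_i$ to the foliation and continuity of the leaf projection must be extracted from acausality and the near-flat local model rather than from an ambient manifold structure.
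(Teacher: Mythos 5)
Your proof follows the same route as the paper: define the leaf-projection maps $\varphi$ and $\psi$ via the unique intersection of each leaf of $F$ with $S_2$ and $S_1$ respectively, check that they are mutually inverse bijections between the two Cauchy surfaces, and conclude. In fact you go further than the paper, whose proof stops at the inverse-bijection step and asserts the homeomorphism outright; your discussion of the continuity of the leaf projections (via the local graph description in a cylindrical foliation chart and a fixed-point argument) addresses precisely the point the paper leaves implicit.
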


\begin{proof}
    The sets $S_1$ and $S_2$ are both homeomorphic to $M/\pazocal{F}$, hence the result.
\end{proof}

Though Cauchy surfaces are defined as general subsets a priori, one may show that this definition gives restrictions on their topology within $M$.

\begin{prop}
Let $M$ be a spacetime with non-trivial causality and let $S$ be a Cauchy surface in $M$. Then $S$ is a submanifold of dimension $p$ which is locally Lipschitz. 
\end{prop}

\begin{proof}
Let $x$ be a point in $S$ and $C(a,b)$ a cylindrical neihborhood in $T_x M$ centered on $x$ endowed with a constant metric $g_+$ such that $g < g_+$ on $C(a,b)$. Let $f : N_0 \rightarrow M$ be an inextendible timelike map containing $x$ such that in a small neighborhood $U$ of $x$ in $C(a,b)$, the image of $f$ is a timelike vector space $V$ for $g_+$ of dimension $q$. Finally, let $D$ be a flat disk in $V$ centered on $x$. 

Any causal disk in $\Diam_{g_+}(\partial D)$ with boundary is also causal for $g$ since $g < g_+$ and as such, it can be glued to $f$ along $\partial D$ into an inextendible causal map of $M$. Since $S$ is a Cauchy surface and $f$ already inersects $S$ in the unique point $x$, any causal disk in $\Diam_{g_+}(\partial D)$ with boundary $\partial D$ must then intersect $S$ exactly once. Assume that there exists two point $a,b$ in $S \cap \Diam_{g_+}(\partial D)$ such that the segment $[a,b]$ is causal for $g_+$. Then since both $a$ and $b$ are in $\Diam_{g_+}(\partial D)$, the set $\partial D \cup \{a,b\}$ is in causal position in $\Diam_{g_+}(\partial D)$, therefore there exists a causal disk in $\Diam_{g_+}(\partial D)$ with boundary $D$ containg both $a$ and $b$. However both $a$ and $b$ were assumed to be in $S$, thus the gluing of the disk with $f$ would intersects $S$ in two different points which is not possible.

From this we deduce that for any two points $a,b$ in $S \cup \Diam_{g_+}(\partial D)$, the segment $[a,b]$ is spacelike, meaning that $S$ is locally the graph of a $1$-Lipschitz map from an open space of $\R^p$ to $\R^q$, hence the result.
\end{proof}

\subsection{Cauchy time functions}

We will generalize the notion of Cauchy time functions in the same manner.

\begin{defn}
    Let~$M$ be a spacetime of signature~$(p,q)$. A \emph{Cauchy time function} is a map~$T \colon M \rightarrow N$ where~$N$ is a~$q$-dimensional manifold such that for each~$y \in N$,~$T^{-1}(y)$ is a Cauchy surface.
\end{defn}

\begin{rem}
    An equivalent formulation is that for each inextendible causal map~$f \colon N_0 \rightarrow M$, the map~$T \circ f \colon N_0 \rightarrow N$ is a homeomorphism.
\end{rem}

\noindent Once again, the Cauchy time function property only has to be checked for smooth maps.

\begin{prop}
    Let~$T \colon M \rightarrow N$ be a map such that for each smooth inextendible causal immersion~$f \colon N_0 \rightarrow M$ of dimension $q$, the map~$T \circ f$ is a homeomorphism. Then~$T$ is a Cauchy time function.
\end{prop}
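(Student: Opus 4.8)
The plan is to reduce everything to the two ``smooth suffices'' results established just above: the one asserting that a subset which meets every smooth inextendible causal immersion exactly once is a Cauchy surface, and the one asserting that a map injective along every smooth inextendible causal immersion is a time function.

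First I would record that $N$ has dimension $q$: since $M$ has property $(A)$ there is a smooth inextendible timelike immersion $f : N_0 \to M$ through any given point, and by hypothesis $T \circ f$ is a homeomorphism, so $\dim N = \dim N_0 = q$; thus $T$ is at least a candidate Cauchy time function. Next, fix $y \in N$ and put $S_y = T^{-1}(y)$. For any smooth inextendible causal immersion $f : N_0 \to M$, the map $T \circ f : N_0 \to N$ is, by hypothesis, a homeomorphism, in particular a bijection, so there is exactly one $x \in N_0$ with $T(f(x)) = y$, i.e. exactly one $x$ with $f(x) \in S_y$. Hence $S_y$ satisfies the Cauchy property for smooth maps, and by the proposition recalled above $S_y$ is a Cauchy surface of $M$. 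As $y$ was arbitrary, every level set of $T$ is a Cauchy surface, which is precisely the definition of $T$ being a Cauchy time function.

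Finally, to match the ``equivalent formulation'' of the remark I would also check directly that $T \circ f$ is a homeomorphism for every \emph{continuous} inextendible causal map $f : N_0 \to M$: injectivity follows because $T$ is a time function (apply the smooth-suffices criterion for time functions, so that injectivity along smooth immersions promotes to injectivity along continuous maps), surjectivity follows because each $S_y$ is a Cauchy surface, and the fact that this continuous bijection between the $q$-dimensional manifolds $N_0$ and $N$ is open — hence a homeomorphism — follows from invariance of domain. No step here presents a genuine difficulty; the only point requiring care is to invoke the reduction propositions with exactly the smooth hypothesis supplied, and to note that at the smooth level we are handed the full homeomorphism statement rather than a mere bijection, which is what makes the comparison of level sets above immediate.
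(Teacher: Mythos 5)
Your proposal is correct and follows essentially the same route as the paper, whose proof is simply to invoke the smooth-suffices result for Cauchy surfaces on each level set $T^{-1}(y)$; your spelled-out argument (bijectivity of $T\circ f$ gives exactly one intersection point with each level set) is exactly that reduction. The final paragraph verifying the equivalent homeomorphism formulation for continuous causal maps is a harmless extra not needed for the statement as defined.
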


\begin{proof}
    Since any level set of $T$ meets all smooth inextendible causal immersions of dimension $q$, all level sets of $T$ are Cauchy surfaces and $T$ is a Cauchy time function by definition.
\end{proof}

\begin{coro}
    Let~$T \colon M \rightarrow N$ be a Cauchy time function on $M$. Then every base space of an inextendible causal map is homeomorphic to~$N$. More precisely, every inextendible causal map can be seen as a section of the map~$T$.
\end{coro}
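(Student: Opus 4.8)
The plan is to unwind the definition of a Cauchy time function and then produce the section explicitly. Recall from the remark following the definition of a Cauchy time function that $T : M \rightarrow N$ being such a function is equivalent to the statement that for every inextendible causal map $f : N_0 \rightarrow M$ of dimension $q$, the composite $T \circ f : N_0 \rightarrow N$ is a homeomorphism. So let $f : N_0 \rightarrow M$ be an arbitrary inextendible causal map; by hypothesis $\varphi := T \circ f : N_0 \rightarrow N$ is a homeomorphism, which already gives $N_0 \simeq N$ and proves the first assertion.

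For the second assertion, I would set $s := f \circ \varphi^{-1} : N \rightarrow M$, which is continuous as a composition of continuous maps. Then $T \circ s = T \circ f \circ \varphi^{-1} = \varphi \circ \varphi^{-1} = \mathrm{id}_N$, so $s$ is a continuous section of $T$. Moreover $f = s \circ \varphi$, i.e. $f$ is obtained from the section $s$ by reparametrising its domain via the homeomorphism $\varphi$; since $\varphi$ is a homeomorphism, this identifies $f$ with $s$ as subsets of $M$ (indeed $\mathrm{Im}(f) = \mathrm{Im}(s) = s(N)$), exhibiting $f$ as a section of $T$ up to the canonical identification $N_0 \simeq N$ given by $\varphi$.

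There is no real obstacle here: the statement is essentially a restatement of the equivalent formulation of \emph{Cauchy time function}, and the only point requiring a moment's care is to observe that $\varphi = T \circ f$ is invertible — which is exactly what the hypothesis of global hyperbolicity grants — so that $s = f \circ \varphi^{-1}$ is well defined, its continuity (and that of $\varphi^{-1}$) being then automatic. One may additionally remark that $T$ restricted to $\mathrm{Im}(f)$ is a homeomorphism onto $N$, which is just $\varphi$ transported along $f$, and that the fibers $T^{-1}(y)$ meet $\mathrm{Im}(f)$ in exactly one point, recovering the defining Cauchy-surface property.
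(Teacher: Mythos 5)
Your proposal is correct and is essentially the argument the paper intends: the corollary is an immediate unwinding of the remark that $T$ being a Cauchy time function means $T \circ f$ is a homeomorphism for every inextendible causal map $f$, and your construction $s = f \circ (T \circ f)^{-1}$ with $T \circ s = \mathrm{id}_N$ is exactly the reparametrisation that exhibits $f$ as a section of $T$. The paper states the corollary without proof precisely because this verification is routine, and your write-up fills it in correctly.
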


\noindent This gives us an easy way to verify that a spacetime does not have a Cauchy time function. For instance,~$\R^{p,q} \setminus \{0\}$ does not have a Cauchy time function since some of the inextendible timelike maps are homeomorphic to~$\R^q$ while others are homeomorphic to~$\R^q \setminus \{0\}$. In \ref{EuclidianGH}, we have shown that~$T \colon \R^{p,q} \rightarrow \R^q$ is a Cauchy time function. By using the same arguments, we can exhibit a very large class of examples of spaces admitting a Cauchy time function.

\begin{defn}
Let $\pi : (M, g_M) \rightarrow (N, g_N)$ be a submersion between two Riemannian manifolds $M$ and $N$. We will say that $\pi$ is a \emph{Riemannian submersion} if for each $x \in M$, the restriction of $df_x$ to $\Ker(d \pi)^{\perp}$ is an isometry between $\Ker(d \pi)^{\perp}$ and $T_{\pi(x)} N$. Furthermore, when $\pi$ is a smooth bundle, we will say it is a \emph{Riemannian bundle}.
\end{defn}

\begin{prop}\label{mainexample}
    Let~$T \colon (M^{p+q}, g_M) \rightarrow (N^q, g_N)$ be a complete Riemannian bundle where~$N$ is simply connected and let~$g$ be the~$(p,q)$ metric defined on~$M$ by the splitting~$Ker(d T) \oplus (Ker(d T))^{\perp}$ where each fiber is positive and its orthogonal is negative. Then the map~$T \colon M \rightarrow N$ is a Cauchy time function.
\end{prop}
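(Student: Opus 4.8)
The plan is to mimic the argument of Proposition~\ref{EuclidianGH}, replacing the flat projection $\R^{p,q}\to\R^q$ by the Riemannian submersion $T$ and the straight-line geodesics by horizontal curves. The key point is that, for the metric $g=g_M-2T^*g_N$ (where fibers are positive of dimension $p$ and the horizontal distribution $H=(\Ker dT)^\perp$ is negative of dimension $q$), any causal immersion $f:N_0\to M$ of dimension $q$ must be transverse to the fibers: its tangent space cannot meet $\Ker df$ nontrivially since $g$ is positive on the fibers. Hence $T\circ f$ is an immersion between $q$-manifolds, i.e.\ a local diffeomorphism. First I would make this precise and establish the basic metric comparison: since $g\le 0$ on $\Im df$, writing a tangent vector $v$ as $v_h+v_f$ (horizontal plus vertical), we get $g_M(v_f)\le 2g_N(dT(v))$, and because $T$ is a Riemannian submersion $g_N(dT(v))=g_N(dT(v_h))=g_M(v_h)$; combined with $-f^*g\ge 0$ this yields that $T\circ f$ is distance-nondecreasing from $(N_0,-f^*g)$ to $(N,g_N)$, and strictly so when $f$ is timelike.

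Next I would show $T\circ f$ is surjective. Take $y\in\partial\,\Im(T\circ f)$ and a unit-speed path $c:(a,b)\to\Im(T\circ f)$ converging to $y$; lift it through the local diffeomorphism to $\tilde c$ in $N_0$. The comparison above bounds the $g_M$-speed of $f\circ\tilde c$: the horizontal part has $g_M$-norm equal to $|dT(\dot{\tilde c})|=|\dot c|=1$, and the vertical part is controlled by the same quantity because $g_M(v_f)\le 2g_N(dT v)$, so $f\circ\tilde c$ has bounded $g_M$-length. Here is where completeness of the Riemannian submersion enters: a $g_M$-bounded-length curve on a complete submersion, whose projection is relatively compact, stays in a compact set, so $f\circ\tilde c$ converges in $M$; since $f$ is inextendible, $\tilde c$ converges in $N_0$, contradicting $y\notin\Im(T\circ f)$. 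The same path-lifting argument, run along an arbitrary path in $N$ rather than just one converging to a boundary point, upgrades ``local diffeomorphism + surjective'' to ``covering map''. Finally, $N$ simply connected forces $T\circ f$ to be a diffeomorphism, so $f$ meets each level set $T^{-1}(y)$ in exactly one point; this holds for every smooth inextendible causal immersion, and by the proposition reducing the Cauchy property to the smooth case (just before this subsection), $T$ is a Cauchy time function.

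The main obstacle I anticipate is extracting the right consequence of ``complete Riemannian submersion'' (or ``complete Riemannian bundle'' as the statement phrases it): one must know that horizontal lifts of curves are defined for all time and that $g_M$-length bounds together with projection into a compact set force subconvergence. For a Riemannian submersion with complete total space this is standard (horizontal lifts exist globally, and bounded-length curves into a complete manifold are relatively compact), but the paper's phrase ``complete Riemannian bundle'' should be pinned down — presumably it means exactly that $g_M$ is complete, or that $T$ has the path-lifting property for all curves. A secondary subtlety is the vertical-component estimate $g_M(v_f)\le 2 g_N(dTv)$: it follows from $g(v)\le 0$ together with $g(v)=g_M(v_f)+g_M(v_h)-2g_M(v_h)=g_M(v_f)-g_M(v_h)$ and $g_M(v_h)=g_N(dTv)$, so $g_M(v_f)=g(v)+g_M(v_h)\le g_M(v_h)=g_N(dTv)$; I should double-check the factor (the metric is $g_M-2T^*g_N$, and on horizontal vectors $T^*g_N=g_M$, so $g$ restricted to $H$ equals $-g_M|_H$, giving $g_M(v_f)\le g_N(dTv)$ rather than $2g_N(dTv)$ — either way it is a linear bound, which is all the argument needs). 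Apart from these points the proof is a routine transcription of Proposition~\ref{EuclidianGH}.
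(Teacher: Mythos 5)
Your proposal is essentially the paper's own proof: establish that $T\circ f$ is a local diffeomorphism via the metric comparison $g_N(d(T\circ f)(v))\geqslant -f^*g(v)$, prove surjectivity by lifting a path to a boundary point of the image and using the speed bound together with completeness of $(M,g_M)$ and inextendibility of $f$, upgrade to a covering by the same lifting argument, conclude by simple connectedness of $N$, and finish by the reduction of the Cauchy property to smooth immersions. The only real divergence is the causal (non-timelike) case: the paper handles it by introducing auxiliary metrics $g'_M\geqslant g_M$ and $g'\leqslant g$ so that causal immersions for $g$ become timelike for $g'$, whereas you treat causal immersions directly, getting injectivity of $d(T\circ f)$ from transversality to the fibers (where $g$ is positive definite) and noting that the vertical estimate $g_M(v_f)\leqslant g_M(v_h)=g_N(dT(v))$ only needs $g(v)\leqslant 0$, not strict negativity; this is a correct and slightly more economical way around the fact that $-f^*g$ is merely semi-definite for causal $f$. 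Your reading of ``complete Riemannian bundle'' as completeness of $(M,g_M)$ matches the paper's use, and your corrected constant in the vertical bound is harmless since only a linear bound is needed.
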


\begin{proof}
    Let us first show that the map~$T \circ f$ is a local diffeomorphism. Since the projection~$T$ removes the positive coordinates, the map~$T \circ f$ increases the distances between the manifolds~$(N_0, -f^* g)$ and~$(N, g_N)$, i.e for each~$x \in N_0, v \in T_x N_0$, 

    \[ g_N(d(T \circ f)(v)) \geqslant - f^* g(v). \] 

    \noindent This tells us that~$T \circ f$ is a local diffeomorphism ; in particular, its image~$\Imm(T \circ f)$ is open in~$N$.

    Let's assume that~$T \circ f$ is not surjective let~$c \colon (a,b) \rightarrow N$ be a smooth path of speed~$1$ in~$\Imm(T \circ f)$ converging to~$y \in \partial \Imm(T \circ f)$ in~$b$. For each~$x \in M$ and~$v \in T_x M$ causal, we have that~$g_M(v) \leqslant 2 g_N(dT(v))$, since~$g(v) \leqslant 0$. In particular, if~$\tilde{c}$ is a path in~$N_0$ such that~$T \circ f \circ \tilde{c} = c$, we have that the speed of~$f(\tilde{c})$ for~$g_M$ is less than~$2$. Since~$(M, g_M)$ is complete,~$f(\tilde{c})$ must converge, and since~$f$ is inextendible so must~$\tilde{c}$. However we have assumed that~$y \notin \Imm(T \circ f)$, hence the contradiction ; the map~$T \circ f$ must be surjective.

     Let us now show that~$T \circ f$ is a covering on~$N$. Let~$y \in N$ and let~$c \colon [0,1] \rightarrow N$ be a smooth path such that~$c(0) = y$. Let~$x \in N_0$ such that~$T \circ f(x) = y$ and~$\tilde{c} \colon I = (0,b) \rightarrow N_0$ such that~$T \circ f(\tilde{c}) = c$ with $b$ maximal. Assume that~$b < 1$. By the same reasoning as before, the smooth path~$f(\tilde{c})$ must converge in~$M$. Since~$f$ is inextendible, the path~$\tilde{c}$ must also converge in~$N_0$. However since~$T \circ f$ is a local diffeomorphism,~$\tilde{c}$ can then be defined further than~$b$, meaning that~$b = 1$. The map~$T \circ f$ must be a covering from~$N_0$ to~$N$ and since~$N$ is simply connected, the map~$T \circ f$ must be a diffeomorphism from~$N_0$ to~$\R^q$, hence the result. 

     Since it is enough to prove the Cauchy property on smooth maps, we have shown that~$T \colon M \rightarrow N$ is a Cauchy time function.
\end{proof}

\begin{rem}
    Since the notion of inextendible causal map does not change when taking another metric in the same conformal class, the previous result applies to spaces conformally equivalent to complete pseudo-Riemannian bundles.
    In particular, the proposition applies to the case where~$P$ is a complete Riemannian manifold,~$N$ a complete simply connected Riemannian manifold, and $M = P \times N$.  For instance, this tells us that when~$q \geqslant 2$ and for each decomposition~$\widehat{\ein}^{p,q} \simeq \s^p \times \s^q$, the projection onto~$\s^q$ is a Cauchy time function. We have shown in section \ref{causaldiamondsinminkowski} that the future of a timelike Möbius sphere of dimension~$q-1$ in~$\R^{p,q}$ is conformally equivalent to~$\h^p \times \h^q$. This means in particular in those spaces, the projection onto~$\h^q$ is a Cauchy time function.
\end{rem}

\begin{rem}
    When~$T \colon M \rightarrow N$ is a Cauchy time function, the inextendible causal maps of~$M$ are all equivalent to global sections~$s \colon N \rightarrow M$ of the map~$T$. In particular, taking a complete Riemannian bundle which does not admit a section gives us an example of a spacetime with trivial causality. The Hopf bundle~$\s^3 \rightarrow \s^2$ is such a space. Non-trivial complete Riemannian bundles with non-trivial causality include but are not limited to tangent bundles on a simply connected Riemannian space endowed with the corresponding Sasaki metric. It is not known whether the topological obstruction to the existence of a section is the only obstruction to non-trivial causality on a complete Riemannian bundle.
\end{rem}

\begin{coro}\label{productcase}
    Let~$P$ be a complete Riemannian manifold and~$N$ a simply connected Riemannian manifold. The projection on the second factor on the pseudo-Riemannian product~$M = P \times N$ is a Cauchy time function.
\end{coro}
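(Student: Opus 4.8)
The plan is to obtain the statement from Proposition \ref{mainexample} applied to the trivial Riemannian bundle $T = \mathrm{pr}_2 : P \times N \to N$. Its fibres are all isometric to the complete manifold $(P, g_P)$, the horizontal distribution $(\ker dT)^\perp$ is tangent to the slices $\{p\} \times N$, and the associated $(p,q)$-metric $(g_P \oplus g_N) - 2\, T^* g_N$ is exactly the product metric $g_P - g_N$, so no adjustment of the metric is required. First I would check, just as in the proof of Proposition \ref{mainexample}, that for a (smooth) inextendible causal immersion $f : N_0 \to M$ the map $T \circ f : N_0 \to N$ is a local diffeomorphism: writing $df(v) = (u, w)$ with $u \in TP$ and $w = d(T \circ f)(v) \in TN$, causality of $f$ gives $g_P(u) \leqslant g_N(w)$, so $w = 0$ would force $u = 0$ as well, contradicting that $f$ is an immersion. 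Hence $\mathrm{Im}(T \circ f)$ is open in $N$.

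The only place where the proof of Proposition \ref{mainexample} uses completeness of the \emph{total} space is to conclude that a causal path $f(\tilde c)$ of bounded speed for $g_M = g_P \oplus g_N$ whose $N$-component converges must itself converge in $M$. For a product this needs only completeness of $P$: writing $f(\tilde c)(t) = (\alpha(t), c(t))$, causality of $f(\tilde c)$ yields $g_P(\dot\alpha) \leqslant g_N(\dot c)$, so $\alpha$ has finite $g_P$-length on the interval under consideration and therefore converges in $P$ by completeness, while $c$ converges in $N$ by the way it was picked. Re-running the two convergence arguments of Proposition \ref{mainexample} with this observation then shows first that $T \circ f$ is surjective and next that it is a covering of $N$ — an inextendible lift $\tilde c$ of a path $c$ in $N$ could not have $f(\tilde c)$ convergent, so $\tilde c$ remains extendible and $c$ can be continued past any candidate endpoint. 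Since $N$ is simply connected, $T \circ f$ is a diffeomorphism from $N_0$ onto $N$; as it suffices to verify the Cauchy property for smooth inextendible causal immersions, $T = \mathrm{pr}_2$ is a Cauchy time function.

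I do not expect a genuine obstacle: the content is a bookkeeping sharpening of Proposition \ref{mainexample}, the key point being that for a \emph{product} (as opposed to a general complete Riemannian bundle) completeness of the base $N$ is superfluous, because the base-component of any lifted test path is prescribed in advance and hence automatically convergent. As a consistency check, taking $P = \h^p$ and $N = \h^q$ recovers the global hyperbolicity of $\h^p \times \h^q$ already obtained via Proposition \ref{causaldiamonds} and the discussion preceding the statement.
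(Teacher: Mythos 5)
Your proof is correct, but it takes a genuinely different route from the paper. You re-run the argument of Proposition \ref{mainexample} directly on the product, observing that for an inextendible causal immersion the lifted test curve splits as $f(\tilde c)=(\alpha,c)$ with the $N$-component $c$ prescribed in advance, and that causality gives $g_P(\dot\alpha)\leqslant g_N(\dot c)$, so only completeness of $P$ is needed to force convergence of $f(\tilde c)$; your handling of causal (not just timelike) immersions is also direct, with no need for the auxiliary metric $g'$ used in the proposition. The paper instead keeps Proposition \ref{mainexample} as a black box: it picks a smooth $f\geqslant 1$ on $N$ with $(N,fg_N)$ complete, notes that $(M,fg_P-fg_N)$ is conformally equivalent to $(M,g_P-g_N)$ (so the inextendible causal maps, and hence the Cauchy property, are unchanged), checks $g_P+fg_N\leqslant fg_P+fg_N$ to get completeness of the rescaled Riemannian product, and applies the proposition to the rescaled submersion. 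Your approach buys self-containedness and an explanation of \emph{why} fiber completeness suffices in the product case, at the cost of repeating the covering argument; the paper's buys brevity and modularity, at the cost of invoking the existence of a complete conformal rescaling of $(N,g_N)$ and the conformal invariance of causality. Both arguments reduce, as they must, to the fact that the Cauchy property need only be checked on smooth inextendible causal immersions.
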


\begin{proof}
    In order to apply proposition \ref{mainexample}, we need to show that in the case of a product, we only need the completeness of the fibers to conclude. Since~$N$ is Riemannian, there exists a smooth map~$f \colon N \rightarrow \R$ such that~$f \geqslant 1$ and~$(N, f g_N)$ is complete. The manifold~$(M, fg_P - fg_N)$ is conformally equivalent to~$(M, g_P - g_N)$ and is a Riemannian submersion for~$M \rightarrow N$. Furthermore, we have that 

    \[ g_P + f g_N \leqslant fg_P + fg_N, \]

    \noindent and the Riemannian manifold~$(M, g_P + f g_N)$ is complete as the product of two complete manifolds, thus~$(M, fg_P + fg_N)$ is also complete and we can apply proposition \ref{mainexample} which gives us the result.
\end{proof}

\noindent It is a well known result that every Riemannian submersion~$T \colon M \rightarrow N$ where~$M$ is complete is actually a fiber bundle. Conversely, for every fiber bundle~$T \colon M \rightarrow N$, it is possible to find Riemannian metrics on~$M$ and~$N$ such that~$M$ is complete and~$T$ is a Riemannian submersion. In our case, this tells us that the topology of any smooth fiber bundle with simply connected base is possible for a space $M$ with Cauchy time function. This clashes with the Lorentzian case where every spacetime with a Cauchy time function is a trivial bundle over $\R$.

\subsection{Restriction on the topological structure by Cauchy time functions}

As we have seen in Proposition \ref{topologicalrigidity}, being globally hyperbolic in the Lorentzian case implies strong topological caracteristics on the manifold~$M$. In order to obtain the same kinds of results in higher signature, one has to require the existence of a causal foliation of dimension~$q$.

\begin{prop}\label{timelikefoliation}
    Let~$T \colon M \rightarrow N$ be a Cauchy time function on a space $M$ with a timelike (resp. causal) foliation~$\pazocal{F}$. Then the bundle~$T \colon M \rightarrow N$ is homeomorphic to the product~$S \times N$ where~$S$ is one of the fibers. In particular, all the fibers are homeomorphic. Furthermore, the sets~$\{*\} \times N$ form a timelike (resp. causal) foliation of~$M$.
\end{prop}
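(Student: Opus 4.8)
The plan is to imitate the proof of Proposition \ref{topologicalrigidity}, with the leaves of $F$ playing the role of the timelike flow lines and $T$ playing the role of the Lorentzian time function. First I would fix a base point $y_0\in N$ and set $S:=T^{-1}(y_0)$, a Cauchy surface of $M$. By the proposition stating that every leaf of a timelike (resp. causal) foliation of dimension $q$ is an inextendible timelike (resp. causal) immersion, together with the corollary that in a globally hyperbolic space every inextendible causal map is a global section of $T$, each leaf of $F$ is a global section of $T$; in particular the leaf $F_x$ through any $x\in M$ meets $S$ in exactly one point, which I call $\varphi(x)$. This defines a map $\varphi:M\to S$ with $\varphi|_S=\mathrm{id}_S$, and for each $y\in N$ the restriction $\varphi|_{T^{-1}(y)}:T^{-1}(y)\to S$ is a bijection with inverse $s\mapsto F_s\cap T^{-1}(y)$; this is precisely the content of the proposition asserting that, in the presence of a timelike foliation, two Cauchy surfaces are homeomorphic, applied to $T^{-1}(y)$ and $S$, and it already yields that all fibers of $T$ are homeomorphic.

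Next I would consider the map $\Psi:M\to S\times N$, $\Psi(x)=(\varphi(x),T(x))$. It is bijective: if $\varphi(x)=\varphi(x')$ then $x,x'$ lie on the common leaf $F_{\varphi(x)}$, on which $T$ is injective, so $T(x)=T(x')$ forces $x=x'$; and any $(s,y)$ is the image of $F_s\cap T^{-1}(y)$. Both $\Psi$ and its candidate inverse $(s,y)\mapsto F_s\cap T^{-1}(y)$ are continuous: this follows from the local triviality of the foliation $F$ in foliated charts, exactly as in the proof of the two-Cauchy-surfaces proposition. In a flow box the plaques vary continuously, so $\varphi$ and the sections $y\mapsto F_s\cap T^{-1}(y)$ are continuous there, and covering a path inside a leaf from $x$ to $\varphi(x)$ by finitely many charts globalizes this; here one may assume $M$, hence $N$, connected, the general case following componentwise. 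Hence $\Psi$ is a homeomorphism, and since $T=\mathrm{pr}_2\circ\Psi$, the map $T:M\to N$ is the trivial bundle with fiber $S$.

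Finally, I would identify the leaves under $\Psi$: for $s\in S$ and $x\in F_s$ one has $F_x=F_s$, hence $\varphi(x)=s$, while $T$ maps $F_s$ homeomorphically onto $N$; therefore $\Psi(F_s)=\{s\}\times N$, and conversely $\Psi^{-1}(\{s\}\times N)=F_s$. So under the identification $M\cong S\times N$ the leaves of $F$ are exactly the slices $\{*\}\times N$, and since $F$ is a timelike (resp. causal) foliation these slices form a timelike (resp. causal) foliation of $M$.

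The step I expect to be the main obstacle is the continuity of $\Psi$ and of its inverse, equivalently the fact that the ``follow the leaf to $S$'' map $\varphi$ is continuous and not merely well defined. This is really a holonomy-triviality statement: the leaves equip the ``bundle'' $T$ with a flat structure, and because each leaf is a section over all of $N$, loops in $N$ lift to loops in leaves, so the holonomy is trivial and $T$ genuinely splits as a product. At the level of detail of this paper it reduces to the foliated-chart argument already invoked for the proposition on Cauchy surfaces; the one point worth spelling out is the use of compactness of a path in a leaf together with continuous dependence of nearby leaves, so that $\varphi$ is continuous even though a single foliated chart need not contain all of the arc from $x$ to $\varphi(x)$.
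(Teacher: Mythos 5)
Your proposal is correct and follows essentially the same route as the paper: the paper's proof also takes $S$ to be a fiber of $T$, sends $x$ to $(\varphi(x),T(x))$ where $\varphi(x)$ is the unique intersection of the leaf through $x$ with $S$, and declares this a fiber bundle isomorphism. Your version simply fills in the details the paper leaves implicit (bijectivity via leaves being global sections of $T$, continuity of $\varphi$ and its inverse via foliated charts, and the identification of the leaves with the slices $\{*\}\times N$), which is consistent with the paper's argument.
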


\begin{proof}
    Let~$x \in M$,~$F_x$ the leaf of~$F$ containing~$x$ and~$\varphi(x)$ the only point both in~$F_x$ and~$S$. The map

    \[x \in M \longmapsto (\varphi(x), T(x)) \in S \times N\]

    \noindent is a fiber bundle isomorphism, hence the result.
\end{proof}

\noindent The examples given all have the topological structure of a smooth fiber bundle~$T \colon M \rightarrow N$. We are going to show that this is actually implied by the definition of Cauchy time function when we assume that~$T$ is smooth and that at least one of the level sets is compact.

\begin{defn}\label{subbundle}
    Let~$T \colon M \rightarrow N$ be a smooth Cauchy time function on $M$ and let~$\gamma \colon I = (a,b) \rightarrow N$ be a smooth path on~$N$. We define the pullback~$\gamma^* M$ as the space 

    \[\bigsqcup_{t \in I} T^{-1}(\gamma(t)),\]

    \noindent along with the projection~$\gamma^* T \colon \gamma^* M \rightarrow I$. This defines a smooth manifold for which the tangent space in~$x \in \gamma^* M$ is~$T_x \gamma^* M = (Ker (dT)) \oplus dT^{-1}(\dot{\gamma}(t))$. This also gives us a Lorentzian metric on~$\gamma^* M$ induced by the metric on~$M$.
\end{defn}

\noindent Those spaces we built using the structure on~$M$ keep some of its properties.

\begin{prop}
    Let~$\gamma \colon I \rightarrow N$ be a smooth path on~$N$. The map~$\gamma^* T \colon \gamma^* M \rightarrow I$ is a time function on the Lorentzian space~$\gamma^* M$.
\end{prop}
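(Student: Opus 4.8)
The plan is to derive the statement from the local Lorentzian causality recalled in section~2, once $\gamma^{*}M$ is given the right time orientation. Write $x$ for a point of $\gamma^{*}M$ lying over $t\in I$; by definition~\ref{subbundle} one has $T_{x}\gamma^{*}M=dT_{x}^{-1}(\R\dot\gamma(t))$, so there is a linear form $\lambda_{x}$ on $T_{x}\gamma^{*}M$, determined by $dT_{x}(v)=\lambda_{x}(v)\,\dot\gamma(t)$, which is exactly $d(\gamma^{*}T)_{x}$ under the identification $T_{t}I\cong\R$; its kernel is $\Ker dT_{x}$, the tangent space of the fibre $T^{-1}(\gamma(t))$. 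Recalling that the fibres of $T$ are spacelike hypersurfaces, $\Ker dT_{x}$ is positive definite, hence meets the causal cone of $T_{x}\gamma^{*}M$ only at $0$; therefore $\lambda_{x}$ is nonvanishing on each of the two connected components of that cone, and of constant sign on each. I would then equip $\gamma^{*}M$ with the time orientation declaring a causal vector $v$ future-oriented when $\lambda_{x}(v)\geqslant 0$: this picks out one component of each causal cone, varies smoothly with $x$, hence is a bona fide time orientation making $\gamma^{*}M$ a Lorentzian spacetime, and it is compatible with the one on $M$.

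The crux is that $\gamma^{*}T$ is non-decreasing along future causal geodesics. If $\sigma\colon[0,1]\to U\subset\gamma^{*}M$ is a geodesic of $\gamma^{*}M$ with future causal initial velocity, then $\dot\sigma$ stays future causal throughout (along a causal geodesic, parallel transport is an isometry preserving the causal type and time orientation of the velocity), so $\tfrac{d}{dr}(\gamma^{*}T\circ\sigma)(r)=d(\gamma^{*}T)_{\sigma(r)}(\dot\sigma(r))=\lambda_{\sigma(r)}(\dot\sigma(r))\geqslant 0$. By the definition of a continuous causal path in a Lorentzian manifold (section~2.2), this makes $\gamma^{*}T\circ c$ non-decreasing for every continuous future causal curve $c\colon J\to\gamma^{*}M$. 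For strictness, suppose $\gamma^{*}T\circ c$ is constant on an interval $[s_{1},s_{2}]$ with $s_{1}<s_{2}$. Then for $s_{0}<s$ close in $[s_{1},s_{2}]$ the geodesic $\sigma=[c(s_{0}),c(s)]$ is future causal, and $\gamma^{*}T$, non-decreasing along $\sigma$ with equal values at its endpoints, is constant on $\sigma$; differentiating gives $\lambda_{c(s_{0})}(\dot\sigma(0))=0$, so $\dot\sigma(0)\in\Ker dT_{c(s_{0})}$ is simultaneously causal and spacelike, hence zero. Thus $\sigma$, and so $c$, is constant near $s_{0}$, whence $c$ is constant on $[s_{1},s_{2}]$. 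Therefore $\gamma^{*}T\circ c$ is strictly increasing whenever $c$ is non-constant.

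Combining the two, $\gamma^{*}T\circ c$ is injective and increasing for every future causal curve of $\gamma^{*}M$, which is exactly the assertion that $\gamma^{*}T\colon\gamma^{*}M\to I$ is a time function. The one point that needs real care is the input that the fibres of $T$ are spacelike, so that $\Ker dT_{x}$ contains no causal vector — this is precisely where the standing restriction to acausal Cauchy surfaces enters, and it is also what guarantees that $\gamma^{*}M$ really is Lorentzian; granting it, the remainder is the linear-algebra identification $\lambda_{x}=d(\gamma^{*}T)_{x}$ together with the local causal structure already in hand.
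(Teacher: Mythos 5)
Your argument hinges on the claim that the fibres of $T$ are spacelike, i.e.\ that $\Ker dT_x$ is positive definite and therefore meets the causal cone of $T_x\gamma^*M$ only at $0$. This is the genuine gap: nothing in the paper establishes it, and it does not follow from the acausality of the Cauchy surfaces, which is what you invoke. Acausal hypersurfaces can have null tangent directions at isolated points. Already in $\R^{1,1}$, the graph of a map $\varphi:\R\rightarrow\R$ with $|\varphi(x)-\varphi(y)|<|x-y|$ for $x\neq y$ but $\varphi'(0)=1$ (for instance $\varphi(x)=x-x^3$ near $0$, suitably modified to be $k$-Lipschitz with $k<1$ outside a compact set) is an acausal Cauchy graph whose tangent at the origin is lightlike; the analogous construction $T(x,y)=y-\psi(x)$ with $\psi:\R^p\rightarrow\R^q$ strictly distance-decreasing on pairs but with $\|d\psi\|=1$ at one point produces a smooth Cauchy time function on $\R^{p,q}$ whose level sets are not spacelike everywhere. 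What the Cauchy property does give (via the argument that a timelike vector sits inside the tangent plane of some inextendible timelike immersion $f$, for which $T\circ f$ is a diffeomorphism) is only that $\Ker dT_x$ contains no \emph{timelike} vector; null vectors in the kernel cannot be excluded, because for merely causal immersions $T\circ f$ is a homeomorphism whose differential may degenerate. At such a point your form $\lambda_x$ vanishes on a null vector, so the time orientation you define by $\lambda_x\geqslant 0$ fails to select one component of the cone, and the strictness step (``causal and in $\Ker dT$ hence spacelike hence zero'') collapses — exactly the step your injectivity rests on.

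The paper avoids this infinitesimal route altogether: given a timelike path $c$ in $\gamma^*M$, it extends the timelike velocity $\dot c(t)$ into (a local piece of) an inextendible timelike immersion $f:N\rightarrow M$ of dimension $q$ containing $c$ near $t$, and uses the full strength of the hypothesis that $T$ is a Cauchy time function — $T\circ f$ is a diffeomorphism — to conclude that nearby points of $c$ lie in distinct fibres, i.e.\ $\gamma^*T\circ c$ is locally injective, hence globally injective as a continuous locally injective map between intervals. No statement about the causal type of $\Ker dT$ is needed there. If you want to keep a differential-geometric argument in your style, you would first have to prove that $dT$ is nondegenerate on the whole causal cone (a ``temporal''-type condition), which, as the example above shows, is simply not a consequence of $T$ being a smooth Cauchy time function.
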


\begin{proof}
    Let~$c \colon J \rightarrow \gamma^* M$ be a timelike path. We are going to show that the map~$\gamma^* T \circ c \colon J \rightarrow I$ is injective. Let~$t \in J$. Since the vector~$\dot{c}(t) \in T_{c(t)} M$ is timelike, we know from lemma \ref{timelikedeformation} that there exists an inextendible timelike immersion~$f \colon N \rightarrow M$ such that~$f$ contains~$c(t')$ for~$t' \in ]t - \varepsilon, t + \varepsilon[$. Since~$T \circ f$ is a diffeomorphism, this implies that the map~$\gamma^* T \circ c$ is injective on~$]t - \varepsilon, t + \varepsilon[$, meaning that~$\gamma^* T \circ c$ is locally injective. Since it is a map between two intervals, it must be globally injective, hence the result.
\end{proof}

\noindent We would like to show that those projections are not only time functions, but also Cauchy time functions in the Lorentzian sense. However, we are unable to do so without the key assumption that~$T$ admits at least one level sets which is compact. It is not known wether we can get rid of this hypothesis.

\begin{theo}\label{subbundleGH}
    Let~$T \colon M \rightarrow N$ be a smooth Cauchy time function on~$M$ such that there exists~$x \in N$ such that~$T^{-1}(x)$ is compact and let~$\gamma \colon I = (a,b) \rightarrow N$ be a smooth path. Then the map~$\gamma^* T \colon \gamma^* M \rightarrow I$ is a Cauchy time function on the Lorentzian space~$\gamma^* M$.
\end{theo}

\begin{proof}
    Let~$(T_1, T_2) \subset I$ and let $c \colon (T_1, T_2) \rightarrow \gamma^* M$ be an inextendible timelike path such that~$\gamma^* T \circ c = \gamma|_{(T_1, T_2)}$. Assume that~$T_2 < b$ and let~$(t_n)$ be an increasing sequence of~$(T_1, T_2)$ converging to~$b$. For each~$n$, let~$f_n \colon N \rightarrow M$ be an inextendible timelike immersion containing~$c(t_n) = f_n(\gamma(t_n))$ and let~$p_n = f_n(x)$. Since~$T^{-1}(x)$ is compact, we can assume that~$(p_n)$ converges to~$p_{\infty} \in T^{-1}(x)$.

    \indent Since~$p_{\infty}$ is an accumulation point of the~$f_n$, by lemma \ref{limit} we can build a causal inextendible lmit map~$f_{\infty} \colon N \rightarrow M$ containing~$p_{\infty}$. The map~$f_{\infty}$ in defined on the whole of~$N$ since~$T$ is a Cauchy time function. Let~$c_{\infty} = f_{\infty}(T_2)$. Since~$t_n$ converges to~$T_2$,~$c(t_n)$ is contained in the image of~$f_n$ and~$c_{\infty}$ is an accumulation point of the~$f_n$, we must have that~$c(t_n)$ converges to~$c_{\infty}$ which is a contradiction as~$c$ was assumed to be inextendible, hence the result.
\end{proof}

\noindent This result has a few immediate consequences.

\begin{coro}
    When~$T$ admits at least one compact level set, the map~$T \colon M \rightarrow N$ is a smooth fiber bundle on~$N$. 
\end{coro}

\begin{proof}
    It is enough to show that for each smooth path~$\gamma \colon I \rightarrow M$, the map~$\gamma^* T \colon \gamma^* M \rightarrow I$ is a fiber bundle. However it is a Cauchy time function, meaning that it must be diffeomorphic to the trivial bundle~$S \times I$ where~$S$ is a level set of~$T$, hence the result.
\end{proof}

\begin{rem}
    In particular, all of the level sets are diffeomorphic, and thus compact.
\end{rem}

\noindent It is possible to improve proposition \ref{subbundleGH} by taking an immersion in~$N$ instead of a smooth path.

\begin{defn}
    Let~$f \colon N' \rightarrow N$ be a immersion from a smooth manifold~$N'$ of dimension~$q' \leqslant q$ to~$N$. In the same manner as definition \ref{subbundle}, one can define the space~$f^* M$ along with a projection~$f^* T \colon f^* M \rightarrow N'$. The metric on~$M$ endowes~$f^* M$ with a pseudo-Riemannian metric of signature~$(p,q')$.
\end{defn}

\begin{prop}
    Let~$T \colon M \rightarrow N$ be a smooth Cauchy time function on~$M$ with at least one compact fiber and let~$f \colon N' \rightarrow N$ be an immersion in~$N$ with~$N'$ simply connected. Then~$f^* T \colon f^* M \rightarrow N'$ is a Cauchy time function.
\end{prop}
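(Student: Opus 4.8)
The plan is to imitate the proof of Proposition~\ref{subbundleGH}, letting the simple connectedness of $N'$ play the role that the contractibility of an interval played there. By the reductions to smooth maps established earlier it suffices to show that for every smooth inextendible causal immersion $g\colon N_0\to f^*M$ of dimension $q'=\dim N'$, with $N_0$ connected (which we may assume), the map $\Phi:=f^*T\circ g\colon N_0\to N'$ is a homeomorphism. I will write $\hat f\colon f^*M\to M$ for the canonical bundle map lying over $f$; since $f$ is an immersion so is $\hat f$, and $T\circ\hat f=f\circ f^*T$, so $h:=\hat f\circ g\colon N_0\to M$ is a causal immersion of dimension $q'\le q$ with $T\circ h=f\circ\Phi$.

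\emph{Step 1: $\Phi$ is a local diffeomorphism.} Because $f$ is an immersion and $T\circ h=f\circ\Phi$, it is enough to see that $d(T\circ h)_x=dT_{h(x)}\circ dh_x$ is injective at each $x$; since $\dim N_0=\dim N'$ this makes $\Phi$ a local diffeomorphism. The image of $dh_x$ is a causal $q'$-plane, while $\ker dT_{h(x)}$ is the tangent space of the Cauchy surface through $h(x)$, and I would argue this tangent space is spacelike, so the two meet only in $0$. For this I would note that a causal tangent vector to a Cauchy surface $S$ may be thickened, as in the constructions producing timelike immersions from timelike curves, into a short timelike immersion meeting $S$ in two points and then prolonged to an inextendible causal immersion of $M$ (such prolongations exist in a globally hyperbolic $M$); that immersion would meet $S$ at least twice, contradicting the Cauchy property.

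\emph{Step 2: $\Phi$ has the unique path-lifting property.} Since $T$ has a compact fibre, the corollary to Proposition~\ref{subbundleGH} makes $T\colon M\to N$ a smooth fibre bundle with compact fibre $S$, hence $f^*T\colon f^*M\to N'$ is a fibre bundle with fibre $S$ and in particular a proper map. Given $\delta\colon[0,1]\to N'$ and $x_0\in\Phi^{-1}(\delta(0))$, I would take the maximal lift $\tilde\delta\colon[0,t_\ast)\to N_0$ provided by Step~1 and show $t_\ast=1$ and that the lift extends to $t_\ast$. Extending $\gamma:=f\circ\delta$ slightly to an open interval $I$, Proposition~\ref{subbundleGH} says $\gamma^*T\colon\gamma^*M\to I$ is a Lorentzian Cauchy time function, and $c(t)=\bigl(t,\hat f(g(\tilde\delta(t)))\bigr)$ is a causal curve of the Lorentzian space $\gamma^*M$ with $\gamma^*T\circ c=\mathrm{id}_{[0,t_\ast)}$. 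If $c$ did not converge as $t\to t_\ast^-$ one could prolong it to an inextendible causal curve of $\gamma^*M$ whose $\gamma^*T$-image, being increasing and equal to the identity on $[0,t_\ast)$, would be a proper subinterval of $I$ --- impossible for a Cauchy time function. Hence $c$, and therefore $g\circ\tilde\delta$, converges at $t_\ast$; inextendibility of $g$ then forces $\tilde\delta$ itself to converge in $N_0$ (otherwise its inextendible prolongation would be carried by $g$ to a path converging at $t_\ast$), and Step~1 lets us continue the lift past $t_\ast$. Uniqueness of lifts is the standard consequence of $\Phi$ being a local homeomorphism.

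\emph{Step 3: conclusion.} A local homeomorphism with the unique path-lifting property is a covering map, so $\Phi\colon N_0\to N'$ is a covering; $N'$ being simply connected and $N_0$ connected, $\Phi$ is a homeomorphism, which is what we wanted. The main obstacle is Step~2 --- the finite-time convergence of the auxiliary curve $c$ --- which is exactly where Proposition~\ref{subbundleGH} and the compactness of the fibre of $T$ are used; Step~1 relies on the (believable but slightly delicate) fact that Cauchy surfaces are spacelike and that local timelike immersions prolong to inextendible ones.
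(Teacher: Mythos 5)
Your proposal follows the same backbone as the paper's proof: reduce to smooth inextendible causal immersions, show $f^*T\circ g$ is a local diffeomorphism, obtain unique path lifting by pulling the situation back over a path via Proposition~\ref{subbundleGH} (where the compact fibre of $T$ enters), conclude that the map is a covering and invoke simple connectedness of $N'$. Two differences are worth noting. First, the paper proves surjectivity of $f^*T\circ g$ as a separate step, by running the limit-section argument of Lemma~\ref{limit} through the compact fibre $T^{-1}(x)$; you instead fold surjectivity into the path-lifting property (lifting paths that start at a point of the image and using connectedness of $N'$), which is a legitimate and slightly leaner organisation, and your maximal-lift argument --- prolonging the auxiliary causal curve in $\gamma^*M$ to an inextendible one before applying the Cauchy property, then using inextendibility of $g$ to make the lift itself converge --- is in fact a more careful rendering of the paper's claim that the lift is ``defined for all $t$''. (Your appeal to properness of $f^*T$ via the fibre-bundle corollary is never used and can be dropped.)

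The weak point is your Step 1. The conclusion you need --- that $\ker dT$ meets the causal plane $\mathrm{Im}\,dh_x$ only in $0$, i.e.\ that the level sets of $T$ are spacelike --- is exactly what the paper silently assumes when it writes ``since it is a local diffeomorphism'', so you are not behind the paper here; but the justification you sketch is not available as stated. It rests on prolonging a small timelike immersion meeting a Cauchy surface twice into an inextendible causal immersion of $M$, and the paper explicitly stresses that, unlike paths, higher-dimensional causal immersions need not admit inextendible extensions; no such extension result is proved for globally hyperbolic spacetimes (property $(A)$ only gives an inextendible timelike immersion through each point, not one extending a prescribed germ). So either treat the spacelikeness of the fibres of a smooth Cauchy time function as the paper does (an unproved standing fact) or supply a genuinely local argument for it; as written, that sub-step of your Step 1 should not be leaned on.
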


\begin{proof}
    Let~$g \colon N_0 \rightarrow f^* M$ be an inextendible smooth causal map in~$f^* M$. Let us first show that~$f^* T \circ g \colon N_0 \rightarrow N'$ is surjective. Assume it is not and let~$(y_n)$ be a sequence in~$\Imm(f^* T \circ g)$ which converges to~$y_{\infty} \in N' \setminus \Imm(f^* T \circ g)$. Let~$(x_n)$ in~$N_0$ such that~$f^* T \circ g(x_n) = y_n$. For each~$n$, there exists an inextendible timelike map~$f_n \colon N \rightarrow M$ which goes through~$g(x_n)$. Since~$T$ is a Cauchy time function, this map also intersects~$T^{-1}(x)$ which is compact, therefore one may extract by lemma \ref{limit} a limit map~$f_{\infty}$ to the~$f_n$ which must be defined on the whole of~$N$. In particular it is defined on~$f(y_{\infty})$, which gives an inconsistency. 

    Let us now show that~$f^* T \circ g$ is a covering on~$N'$. Since it is a local diffeomorphism, we only have to show that every smooth path in~$N'$ can be uniquely lifted to a smooth path from any point in a fiber. Let~$\gamma \colon [a,b] \rightarrow N'$ be a smooth path in~$N'$. By proposition \ref{subbundleGH}, we know that~$(f \circ \gamma)^* T \colon (f \circ \gamma)^* M \rightarrow (a,b)$ is a Cauchy time function. In particular, it is isomorphic to a trivial bundle over $(a,b)$ and thus verfies the path lifting property. Thus $f^* T \circ g$ is a covering on $N'$. Since~$N'$ is assumed to be simply connected, we know that~$f^* T \circ g$ must be a diffeomorphism, hence the result.
\end{proof}

\subsection{Global hyperbolicity}

We have seen two possible generalizations of global hyperbolicity in higher signature. The first one is the existence of a Cauchy surface. This generalization is most likely not the most interesting one as it does not give sufficient informations on the global structure of the spacetime; indeed, one may find a spacetime $M$ with a Cauchy surface $S$ which remains a Cauchy surface in the space $M$ from which we removed a point $x$; take for instance $M = \R^{p,3}$, $S = \R^{p,0}$ and $x = (0, \dots, 0, 1,0,0)$. Another stronger generalization is the existence of a Cauchy time function. This on the contrary gives ample informations on the structure of the spacetime $M$ as seen previously, however it is quite difficult in practice to exhibit a Cauchy time function on a given spacetime. In the Lorentzian case the volume method used by Geroch gives ways to build Cauchy time functions on a spacetime with minimal assumptions, however those methods fail in higher signature. The correct generalization of global hyperbolicity must then lie in between. We offer a definition inspired by the compactness of causal diamond in Lorentzian signature. For $S$ an oriented embedded $(q-1)$-sphere in causal position, let $\C(S)$ be the set of causal maps $f : \D^q \rightarrow M$ up to reparametrization of $\D^q$ such that $f|_{\partial \D^q}$ is an embedding with image $S$ whose orientation given by the time orientation of $M$ corresponds to the orientation of $S$. Let us put on $\C(S)$ the topology given by the sets $\C(S)(U)$ of maps $f$ in $\C(S)$ whose image is in a given open set $U$ of $M$.

\begin{defn}
A spacetime $M$ is said to be \emph{globally hyperbolic} if it satisfies the two following conditions :

\begin{enumerate}
    \item There exists a smooth manifold $N$ such that for any inextendible causal map $f : N_0 \rightarrow M$, $N_0$ is homeomorphic to $N$.
    \item For any oriented embedded $(q-1)$-sphere $S$ in causal position in $M$, $\C(S)$ is compact.
\end{enumerate}
\end{defn}

Let us first show that this definition is implied by the existence of a Cauchy time function. The first condition is immediate as for any Cauchy time function $T : M \rightarrow N$, $T \circ f$ gives a homeomorphism between $N_0$ and $N$ for any inextendible causal map $f : N_0 \rightarrow M$. The second condition is a bit more involved. Let us start by proving the following lemma.

\begin{lemma}\label{limit}
    Let~$T \colon M \rightarrow N$ be a Cauchy time function on $M$ and let~$s_n \colon N \rightarrow M$ be a sequence of causal sections of~$T$. Assume there exists a point $x$ of $M$ such that $s_n(T(x))$ converges to $x$ in $M$. Then there exists a subsequence of~$(s_n)$ which converges locally uniformaly to a causal section~$s_{\infty} \colon N \rightarrow M$. 
\end{lemma}

\begin{proof}
    Let~$C(a,b) \subset T_x M$ be a cylindrical neighborhood centered at~$x$. Since the~$s_n(T(x))$ converges to~$x$ we know that the images of~$s_n$ are in~$C(a,b)$ for $n$ big enough. Let~$g_-$ be a constant metric on~$C(a,b)$ such that~$g_- \leqslant g$. Since every~$s_n$ is causal for~$g$, every~$s_n$ is also causal for~$g_-$ ; in particular, for $n$ big enough, every~$s_n$ can be seen as the graph of a~$1$-Lipschitz map for~$g_-$ from~$\D^q(0,b)$ to~$(-a,a)^p$, each of whom have values in a compact set. Arzela-Ascoli theorem then tells us that there exists a~$1$-Lipschitz map~$s_{\infty} \colon \D^q(0,b) \rightarrow (-a,a)^p$ to which the~$s_n$ converge uniformaly up to a subsequence. We define in this manner a map~$s_{\infty} \colon U \subset N \rightarrow M$ on an open neighborhood~$U$ of~$T(x)$. 

    \indent Let's assume that~$U$ is a maximal open set on which~$s_{\infty}$ can be defined. Assume that~$s_{\infty}$ is not inextendible. Then there must exist an inextendible path~$c \colon \R \rightarrow U$ such that~$s_{\infty} \circ c$ converges to~$y \in M$. Since the~$s_n$ converge uniformaly to~$s_{\infty}$, the sequence~$s_n(T(y))$ must converge to~$y$, and by repeating the previous construction, it becomes possible to extend~$U$ by adding an open set containing~$T(y)$. Thus the map~$s_{\infty}$ must be an inextendible causal map, and since~$T$ is a Cauchy time function,~$s_{\infty}$ must be defined on the whole of~$N$. Hence the result.
\end{proof}

\noindent Now equipped with this lemma, we can prove the following theorem.

\begin{theo}\label{compactnessofdiamonds}
    Any spacetime $M$ admitting a Cauchy time function is globally hyperbolic.
\end{theo}

\begin{proof}
    Let $S$ be an embedded oriented $(q-1)$-sphere in causal position in a spacetime $M$ admitting a Cauchy time function $T : M \rightarrow N$. Since $S$ is in causal position, there exists a causal section $s : N \rightarrow M$ for $T$ such that $T(S)$ is a hypersphere in $N$ with interior $D$ and $s(T(S)) = S$. Any element in $\C(S)$ may be glued to $s$ along $S$, therefore the elements of $\C(S)$ are exactly the causal sections $s_0 : D \rightarrow M$ and any such element can be seen as an inextendible causal map constant outside of $D$. 

    If $D$ is not a disk then $\C(S)$ is empty, therefore let us assume $D$ is a disk. Let $s_n : D \rightarrow M$ be a sequence in $\C(S)$. By using Lemma \ref{limit}, one may extract a causal section $s_{\infty} : D \rightarrow M$ to which the $s_n$ converge locally uniformaly. Since $D$ is compact, the convergence must be uniform and the set $\C(S)$ must be compact, hence the result.
\end{proof}

The order of implications between the notions we introduced are displayed in Figure \ref{fig:implications}.

\begin{figure}[h!bt]
    \begin{center}
      \includegraphics[width=.7\linewidth]{implications.png}
      \end{center}
      \caption[]{The timelike curve goes from the origin to the vector~$v$.}
    \end{figure}\label{fig:implications}

We do not know as of now if there exists any globally hyperbolic space which do not admit a Cauchy surface, we suspect that none exist under some additional conditions on the spacetime $M$.

\subsection{Plateau problem in globally hyperbolic spaces}

In the Lorentzian setting, the compactness of the space of causal paths with fiexed extremities allows us to prove a central result in globally hyperbolic spaces, the existence of causal geodesics between any two causally related points (see \cite{largescalestructure}, proposition 6.7.1). We may prove an analogous result in higher signatures, by generalising the existence of a length-maximising geodesic to the solution of a Plateau problem. Let~$M$ and $S$ be as defined in the previous section. As seen in proposition \ref{diffalmosteverywhere}, any element in $\C(S)$ must be differentiable everywhere, which allows us to define an area functionnal.

\begin{defn}
    Let~$s \in \C(S)$ and let

    \[\pazocal{A}(s) = \Vol(B^{\mathrm{o}}, - s^* g_M).\]

    \noindent The function~$\pazocal{A}$ is the area functionnal on the set~$\C(S)$.
\end{defn}

\begin{theo}\label{Plateauproblem}
    There exists a~$s$ in~$C(S)$ for which~$\pazocal{A}(s)$ is maximal.
\end{theo}

It is clear that the functionnal $\pazocal{A}$ is not continuous on $\C(S)$ for the uniform topology. In the Lorentzian Minkowski space for instance, a timelike geodesic of non-zero length may be uniformaly approximated by piecewise lightlike paths which have length zero. It is however continuous on the space of $\Cc^1$ sections of $\C(S)$ for the $\Cc^1$ convergence, however this space is not compact. In order to prove the theorem, we will show that $\pazocal{A}$ is upper semi-continuous on $\C(S)$.

\begin{lemma}
The functionnal $\pazocal{A}$ is upper semi-continuous on $\C(S)$.
\end{lemma}

\begin{proof}
Let $s_0 : \overline{\D^q} \rightarrow M$ be an element in  $\C(S)$, let $x \in \D^q$ and let $C(a,b)$ be a cylindrical neighborhood centered at $s_0(x)$. Taking an arbitrarily small deformation of $g$ in $C(a,b)$ lets us assume that $s_0$ is timelike. In $C(a,b)$, $s_0$ is the graph of a map $f : \D(0,b) \rightarrow (-a,a)^p$. For $\delta > 0$ and for each $s \in \D(0, \delta)^p$, let $f_s = f_0 + s$. Since the graph of $f_0$ is timelike, there exists a $\delta > 0$ such that the graph of each $f_s$ is timelike. Let $U$ be the reunion of the graphs of the $f_s$. For each $y \in U$ at which the corresponding $f_s$ is differentiable and each $v \in T_y M$, let us write 

\[\overline{g}_y(v) = g_y(\pi_y(v)),\]   

\noindent where $\pi_y$ is the orthogonal projection on the tangent space to the graph of $f_s$. The metric $\overline{g}$ is defined almost everywhere. It is negative in $q$ directions, degenerate in $p$ directions and verifies $g \geqslant \overline{g}$. Thus for each inextendible causal map $\hat{s}$ with image in $U$, we have 

\[\pazocal{A}_{g}(\hat{s}) \leqslant \pazocal{A}_{\overline{g}}(\hat{s}).\] 

\noindent When $v$ is a vector tangent to the graph of $f_0$, $\overline{g}(v) = g(v)$. This means in particular that $\pazocal{A}_{\overline{g}}(\hat{s})$ converges to $\pazocal{A}_{g}(\hat{s})$ when $\hat{s}$ goes to $s_0$, thus for each $\varepsilon > 0$ there exists an open set $V \subset U$ containing the image of $s_0$ such that for each $\hat{s}$ with image in $V$, 

\[\pazocal{A}_{g}(\hat{s}) \leqslant \pazocal{A}_{g}(s_0) + \varepsilon.\]   

Since $\overline{\D^q}$ is compact, one only needs to do this procedure on a finite number of cylindrical neighborhood, hence the result.
\end{proof}

\begin{proof}[Proof of theorem \ref{Plateauproblem}]

Let~$M = \sup_{\C(S)} \pazocal{A}$ and let~$s_n$ be a sequence of~$\C(S)$ such that~$\pazocal{A}(s_n)$ converges to~$M$. Since~$\C(S)$ is compact,~$s_n$ converges to~$s_{\infty}$ up to an extraction and since~$\pazocal{A}$ is upper semi-continuous,~$\limsup \pazocal{A}(s_n) \leqslant \pazocal{A}(s_{\infty})$. We must then have 

\[\limsup \pazocal{A}(s_n) = M \leqslant \pazocal{A}(s_{\infty}) \leqslant M,\] 

\noindent meaning that~$\pazocal{A}(s_{\infty}) = M$, hence the result.
    
\end{proof}

\section{Global hyperbolicity in $(\so_0(p,q+1), \mathbb{H}^{p,q})$-structures}

\subsection{Causality in the Einstein space}

    As we have seen, one may define affine charts of~$\widehat{\ein}^{p,q}$ which are conformally equivalent to~$\R^{p,q}$. Since~$\widehat{\ein}^{p,q}$ and~$\R^{p,q}$ both have notions of causality which may not coincide, as inextendability depends on the spacetime, one may wonder if two points which are causally related in an affine chart of~$\widehat{\ein}^{p,q}$ are necessarily causally related in~$\widehat{\ein}^{p,q}$.
    
    \begin{prop}
    Let~$\R^{p,q} \subset \widehat{\ein}^{p,q}$ be an affine charts containing two points~$x,y$. Then~$x$ and~$y$ are causally related in~$\R^{p,q}$ if and only if they are causally related in~$\widehat{\ein}^{p,q}$.
    \end{prop}
    
    \begin{proof}
    If~$x$ and~$y$ are causally related in~$\widehat{\ein}^{p,q}$, there exists an inextendible causal map~$f \colon \s^q \rightarrow \widehat{\ein}^{p,q}$ containing both~$x$ and~$y$ in its image. The intersection of the image of this causal map with~$\R^{p,q}$ gives an causal map of~$\R^{p,q}$ containing both~$x$ and~$y$. Assume there exists a path $c : I \rightarrow f^{-1}(\R^{p,q})$ inextendible in $f^{-1}(\R^{p,q})$ such that $f \circ c$ converges in $\R^{p,q}$. The path $c$ must also be inextendible in $\s^p$ as otherwise the limit point would still be in $f^{-1}(\R^{p,q})$ which contradicts the fact that $f \colon \s^p \rightarrow \s^q$ is inextendible, thus the restriction of $f$ to $f^{-1}(\R^{p,q})$ is also inextendible. Hence~$x$ and~$y$ are causally related in~$\R^{p,q}$.
    
    If~$x$ and~$y$ are causally related in~$\R^{p,q}$, we know that the segment~$[x,y]$ must be causal from \ref{geodesics}. We may then include~$\{x,y\}$ in an affine causal space of dimension~$q$ in~$\R^{p,q}$, whose closure in~$\widehat{\ein}^{p,q}$ is a Möbius sphere of dimension $q$, hence the result.
    \end{proof}
    
    \noindent Since~$\s^p$ and~$\R^p$ are both complete with respect to their standard metric, proposition \ref{mainexample} tells us that both~$\widehat{\ein}^{p,q} \simeq \s^p \times \s^q$ and~$\R^{p,q}$ are globally hyperbolic with the projection on the second factor as a Cauchy time function. This means in particular that the inextendible causal maps of~$\widehat{\ein}^{p,q}$ (resp.~$\R^{p,q}$) are the graphs of~$1$-Lipschitz maps~$s \colon \s^q \rightarrow \s^p$ (resp.~$s \colon \R^q \rightarrow \R^p$).
    
    For a point $x$ in~$\widehat{\ein}^{p,q}$, the set of points causally related to~$x$ is a space which, in a sense, is too big to be interesting. In order to get a smaller space, one needs to consider the causal diamonds.
    
    \begin{defn}
    A set~$E$ in~$\widehat{\ein}^{p,q}$ is said to be \emph{proper} if its closure is contained within an affine chart of~$\widehat{\ein}^{p,q}$. An equivalent definition when $E$ is connected is that there exists an isotropic cone in~$\widehat{\ein}^{p,q}$ which does not meet the closure of~$E$.
    \end{defn}

    \noindent Let~$\C$ be a causal (in the sense of definition \ref{causalset}) oriented closed submanifold of dimension~$q-1$ in~$\widehat{\ein}^{p,q}$. The set $\C$ is the graph of a~$1$-Lipschitz map from~$\pi(\C) \subset \s^q$ to~$\s^p$ which may be extended to a~$1$-Lipschitz map on~$\s^q$. As~$\pi(C)$ is an oriented closed submanifold of codimension~$1$ in~$\s^q$, it must be the boundary of a compact manifold with boundary~$B$ determined by the orientation of $\C$ and the time orientation of $\widehat{\ein}^{p,q}$ with $B$ as the interior of $\C$. We will denote by $\Diam(\C)$ the open future of $\C$ in $\widehat{\ein}^{p,q}$ as defined in definition \ref{openfuture}.
    
    \begin{defn}
    The submanifold~$\C$ is said to be \emph{purely lightlike} if~$\Diam(\C)$ is empty for at least one orientation of $\C$.
    \end{defn}
    
    \noindent From now on we will asume that~$\C$ is not purely lightlike.
    
    \begin{prop}\label{properdiamond}
    The set~$\Diam(\C)$ is proper in~$\widehat{\ein}^{p,q}$.
    \end{prop}
    
    \begin{proof}
    Let~$\overline{\C}$ be the submanifold~$\C$ with opposite orientation and let~$x = (x_1,x_2) \in \Diam(\overline{\C}) \subset \s^p \times \s^q$. Let us assume that the isotropic cone of~$x$ meets the closure of~$\Diam(\C)$ in a point~$(y_1,y_2)$. Then there must exist a lightlike geodesic of~$\widehat{\ein}^{p,q}$ going from~$(x_1, x_2)$ to~$(y_1, y_2)$, which tells us in particular that~$d_{\s^p}(x_1, y_1) = d_{\s^q}(x_2, y_2)$. Since~$(x_1, x_2)$ and~$(y_1, y_2)$ are both in the closure of the diamonds of~$\C$ for the two orientations, there must exist a map~$s \colon \s^q \rightarrow \s^p$ containing both~$(x_1, x_2), (y_1, y_2)$ and~$\C$ in its graph. The projection~$\pi(\C)$ of~$\C$ in~$\s^q$ splits~$\s^q$ in two connected components, one containing~$x_2$ and the other containing~$y_2$. Let~$c$ be a geodesic on~$\s^q$ going from~$x_2$ to~$y_2$ in~$\s^q$ and let~$z$ be a point in~$c$ which is also in~$\pi(\C)$. The restriction of~$s$ to~$c$ is a~$1$-Lipschitz map containing such that~$s(x_2) = x_1$,~$s(y_2) = y_1$ with~$d_{\s^p}(x_1, y_1) = d_{\s^q}(x_2, y_2)$. This implies that~$s$ is an isometry, meaning that the point~$(s(z),z)$ is on the isotropic cone of~$(x_1, x_2)$. This is not possible as~$(x_1, x_2)$ was assumed to be in the causal diamond of~$\overline{\C}$, hence the result.
    \end{proof}
    
    \begin{coro}
    In particular, every closed causal submanifold of dimension~$q-1$ in~$\widehat{\ein}^{p,q}$ which is not purely lightlike must be proper.
    \end{coro}
    
    \begin{rem}
    It is also possible to reverse the sign and build a proper space using a closed submanifold of dimension~$p-1$ which is causal (resp. timelike) in~$\widehat{\ein}^{q,p} = - \widehat{\ein}^{p,q}$. Such a set will be called \emph{achronal} (resp. \emph{acausal}).
    \end{rem}
    
    \noindent Since every causal diamond is proper, we may study them in the space~$\R^{p,q}$. The most regular type of causal diamond is the flat causal diamond, obtained by taking a timelike Möbius sphere in~$\widehat{\ein}^{p,q}$. As we have seen in Proposition \ref{causaldiamonds}, the diamond is then conformally equivalent to~$\h^p \times \h^q$; in particular, it is globally hyperbolic.

    \subsection{Positive causal diamonds admit Cauchy time functions}
    
    One may wonder if diamonds associated to less symmetric causal submanifolds of dimension $q-1$ may admit a Cauchy time function. One may check that~$\Diam(\C)$ almost never admits a Cauchy time function when~$\C$ is a causal~$(q-1)$-sphere. We need to consider a reversed diamond $\Diam^+(\C)$ obtained from an achronal closed submanifold $\C$ of dimension~$p-1$.
    
    \begin{lemma}
    Let $B$ be a compact submanifold of $\R^p$ of dimension $p$ with boundary. Let $s : \partial B \rightarrow \R^q$ be a $1$-Lipschitz map whose graph is non-purely lightlike and let $s_0 : B \rightarrow \R^q$ be a $1$-contracting map on $B^{\circ}$ equal to $s$ on $\partial B$. There exists a family~$(s_i)_{i \in I}$ of~$1$-contracting maps extending~$s$ containing~$s_0$ and whose graphs form a foliation of~$\Diam^+(s)$.
    \end{lemma}
    
    \begin{proof}
    Let~$C$ be the set of~$1$ contracting maps~$\hat{s} \colon B \rightarrow \R^q$ extending~$s$ and let~$v$ be a unit vector in~$\R^q$. Let~$C_v$ be the subset of~$C$ made of the maps~$\hat{s}$ such that for each~$x \in B$,~$\hat{s}(x) \in s_0(x) + \R v$. In particular, for each~$v$ unit vector in $\R^q$,~$s_0$ is in $C_v$. \\
    
    \noindent The set~$C_v$ admits two extremal points in its closure for the pointwise topology which graph are the upper and lower boundaries of the intersection of $\Diam^+(s)$ with the set $\{s_0(x) + \R v, x \in B\}$. Let~$s_v$ be an extremal point verifying that for all~$x$,~$s_v(x) \in s_0(x) + \R^{>0} v$. Since~$C$ and~$C_v$ are both convex, the family~$\{ts_v + (1-t)s_0\}_{t \in (0,1]}$ belongs to~$C_v$ and by construction and the family~$\{t s_v + (1-t)s_0\}_{v \in \s^{q-1}, t \in (0, 1]}$ contains~$s_0$. Since for each $x$ and $v$, the graph of $s_v$ is the upper boundary of the intersection of $\Diam^+(s)$ with the set $\{s_0(x) + \R v, x \in B\}$, the graphs of the segment $[s_v, s_0]$ must foliate the intersection between $\Diam^+(s)$ and $\{s_0(x) + \R^{>0} v, x \in B\}$ which gives us the result.
    \end{proof}
    
    \begin{lemma}\label{surface into foliation}
    There exists a map~$T \colon \Diam^+(s) \rightarrow \D^q$ for which the level sets are the graphs of the $(s_i)$.
    \end{lemma}
    
    \begin{proof}
    Let~$y \in B^\circ$ and~$D = \Diam^+(s) \cap (y + \R^{0,q})$. For each~$x \in \Diam^+(s)$, there exists a unique~$s_i$ containing~$x$ in its graph. This graph intersects~$D$ in a unique point by construction which we will denote by~$T(x)$. The map $T$ is continuous since the family $(s_i)_I$ varies continuously with $i$. By construction, the level sets of~$T$ in~$\Diam^+(s)$ are the graphs of the~$s_i$. For each~$x$ in the graph of~$s$, the intersection of the positive part of the isotropic cone with~$y+\R^{0,q}$ is convex. Since~$D$ is the intersection of each of those convex spaces for~$x$ in the graph of~$s$,~$D$ must be convex and thus homeomorphic to~$\D^q$, hence the result.
    \end{proof}

    \begin{prop}\label{diamondGH}
        Let $T, I, (s_i)_{i \in I}$ be as in the previous lemmas, then the map~$T$ is a Cauchy time function.
    \end{prop}

    \begin{proof}
    Let~$f \colon N_0 \rightarrow \Diam^+(s)$ be an inextendible causal map. We will start by showing that~$T \circ f \colon N_0 \rightarrow \D^q$ is a local homeomorphism. Let~$x \in N_0$. Since~$f$ is causal, there exists an open set~$U$ containing~$x$ such that in~$U$,~$f$ is the graph of a~$1$-Lipschitz map from an open set of~$\R^q$ to~$\R^p$. By Kirszbraun's Theorem this map may be extended to a~$1$-Lipschitz map~$g \colon \R^q \rightarrow \R^p$. Each leaf~$(T=cste)$ is the graph of a~$1$-contracting map which may be extended to a map outside of~$B$,~$\hat{s_i} \colon \R^p \rightarrow \R^q$. The composition~$g \circ \hat{s_i} \colon \R^p \rightarrow \R^p$ is~$1$-contracting and thus admits at most one fixed point which corresponds to intersection between the graphs of~$g$ and~$s_i$. Since the level sets of $T$ foliate a neighborhood of $f(x)$, we know that those intersections exist in a neighborhood of $x$, meaning that the map $T \circ f$ is a local homeomorpism.
    
    We will next show that~$T \circ f$ is a covering onto~$\D^q$. For this, we agin prove a pth lifting property. Let~$c \colon [0,1] \rightarrow \D^q$ be a path starting in the image of~$T \circ f$. Since~$T \circ f$ is a local homeomorphism, one may pull it back into a path~$\tilde{c} \colon [0,a) \rightarrow N_0$ such that~$T \circ f \circ \tilde{c} = c|_{[0,a)}$. Let us show that~$\tilde{c}$ can be extended into the whole of~$[0,1]$. Since~$c$ converges in~$\D^q$ and~$f \circ \tilde{c}$ is causal,~$f \circ \tilde{c}$ must converge in~$a$ to a point in~$\overline{T^{-1}(a)}$. Assume that this limit~$\ell$ is in~$\partial T^{-1}(a) = graph(s)$. By construction of~$\Diam^+(s)$, each point~$x$ in~$\Diam^+(s)$ must verify that~$[x,\ell]$ is spacelike. In particular, for each~$t \in [0,a)$,~$[f(\tilde{c}(t)), \ell]$ is spacelike. This is not possible near~$a$ since~$f \circ \tilde{c}$ is causal and converges to~$\ell$, thus~$\ell$ must be in~$\Diam^+(s)$. Since~$f$ was taken to be inextendible in~$\Diam^+(s)$,~$\tilde{c}$ must then converge in~$N_0$ and can then be extended further than~$a$, hence the result. Since this can be done for any path~$c$,~$T \circ f$ must be a covering on~$\D^q$. Since~$\D^q$ is simply connected,~$T \circ f$ is a homeomorphism from~$N_0$ to~$\D^q$, hence the result.
    \end{proof}
    
    \begin{coro}
    Let~$s_0 \colon B \rightarrow \R^q$ be a~$1$-contracting map extending~$s$. The graph of~$s_0$ is a Cauchy surface in~$\Diam^+(s)$.
    \end{coro}
    
    \begin{proof}
    Lemma \ref{surface into foliation} tells us that~$s_0$ may be completed into a foliation defined by~$T \colon \Diam^+(s) \rightarrow \D^q$ and Theorem \ref{diamondGH} that~$T$ must then be a Cauchy time function. In particular, the graph of~$s_0$ must be a Cauchy surface as it is a level set of $T$.
    \end{proof}
    
    \begin{rem}
    This result gives us examples of spaces admitting a Cauchy time function which do not have a simple conformal model.
    \end{rem}
    
    \begin{prop}\label{causallyconvex}
    Let~$x,y$ two points in~$\Diam^+(s)$ which are causally related. Then the segment~$[x,y]$ is causal and the two points~$x,y$ are causally related in~$\R^{p,q}$ (and thus in~$\widehat{\ein}^{p,q}$).
    \end{prop}
    
    \begin{proof}
    Assume that~$x,y$ are causally related in~$\Diam^+(s)$ and that~$[x,y]$ is spacelike. Then there must exist a~$1$-contracting map~$s_0 \colon B \rightarrow \R^q$ extending~$s$ containing both~$x$ and~$y$ in its graph. By the previous corollary, any inextendible causal map only intersects the graph of~$s_0$ in one point, meaning that~$x$ and~$y$ cannot be causally related in~$\Diam^+(s)$, hence the result.
    \end{proof}

\begin{rem}
To my knowledge, there does not exist a more direct proof of this fact as we have very little control a priori over which inextendible maps in $\Diam^+(s)$ may be extended into an inextendible map in $\R^{p,q}$.
\end{rem}

\subsection{GH-regular representations in $\so_0(p, q+1)$}

Let~$\R^{p, q+1}$ be the vector space~$\R^{p+q+1}$ endowed with the quadratic form

\[Q \colon (x_1,\cdots,x_p,y_1,\cdots,y_{q+1}) \longmapsto x_1^2 + \cdots + x_p^2 - y_1^2 - \cdots - y_{q+1}^2.\]

\noindent Let~$\h^{p, q}$ be the hypersurface $\{Q = -1\}$ of~$\R^{p,q+1}$. As we have seen in section \ref{pseudoriemannianexamples}, since the tangent space of~$\h^{p,q}$ at~$x \in \h^{p,q}$ is identified with~$x^{\perp}$,~$Q$ endows~$\h^{p,q}$ with a pseudo-Riemannian metric of signature~$(p,q)$. We sometimes prefere to use the spherical coordinates on the negatives factor $\R^{p, q+1} \simeq \R^p \times \R^+ \times \s^{q}$, which we will write $(x_1,\cdots, x_p, y_1,\cdots, y_{q+1}) \simeq (x_1,\cdots,x_p, r, s)$.

\begin{prop}[\cite{beyrer2023mathbbhpqconvex}, prop. 2.3]
The space~$\h^{p,q}$ is conformally equivalent to~$(\D^p \times \s^q, g_{\s^p} - g_{\s^q})$ where~$\D^p$ is seen as the upper hemisphere of~$\s^p$ via the map 

\[\psi \colon \left( x_1, \cdots, x_p, r, s\right) \in \R^{p,0} \times \R^+ \times \s^q \longmapsto \left( \frac{1}{r}, \frac{x_1}{r}, \cdots, \frac{x_p}{r}, s \right) \in \R^p \times \s^q.\]

\noindent In particular,~$\h^{p,q}$ can be conformally embedded in~$\widehat{\ein}^{p,q}$ with boundary equal to~$\widehat{\ein}^{p-1,q}$.
\end{prop}

\begin{rem}
In the projective model of~$\h^{p,q}$ in~$\mathbb{P}(\R^{p, q+1})$, the boundary~$\partial \h^{p,q}$ is identified with the projectivisation of the isotropic cone~$\mathcal{C}(Q)$ of~$Q$.
\end{rem}

\noindent Let~$\Lambda$ be an achronal~$(p-1)$-sphere in~$\partial \h^{p,q}$ and let~$\Omega(\Lambda)$ be the positive diamond spanned by~$\Lambda$ in~$\widehat{\ein}^{p,q}$ which is included in $\h^{p,q}$. This does not depend on the orientation of $\Lambda$. From now on, we will assume that~$\Lambda$ is not purely lightlike, i.e that~$\Omega(\Lambda)$ is non-empty.

\begin{defn}
Let~$\rho \colon \Gamma \rightarrow \so_0(p, q+1)$ be a discrete and faithful representation from a finitely generated, torsion free group~$\Gamma$. We'll say that $\rho$ is \emph{GH-regular} if there exists an achronal non-purely lightlike $(p-1)$-sphere in $\partial \h^{p,q}$ which is stabilized by $\rho$.
\end{defn}

This terminology is inspired by the works of Barbot \cite{barbot2013deformations}. When $\rho$ is GH-regular, the action of~$\rho$ on~$\h^{p,q}$ must stabilize~$\Omega(\Lambda)$.

\begin{prop}
The action of~$\rho$ on~$\Omega(\Lambda)$ is properly discontinuous.
\end{prop}

\begin{proof}
We have proven in Proposition \ref{properdiamond} than causal diamonds are proper in the Einstein space. In particular,~$\Omega(\Lambda)$ must be proper in~$\widehat{\ein}^{p,q}$, which implies that the action of~$\rho$ on~$\Omega(\Lambda)$ must also be proper. One may see this by showing that~$\rho$ acts isometrically on the convex~$\Omega(\Lambda)$ for the Hilbert metric or by applying corollary 5.3 of \cite{Zimmer2}. 
\end{proof}

\noindent Let~$\Omega(\Lambda)/\rho$ be the quotient of~$\Omega(\Lambda)$ by~$\rho$. It is a pseudo-Riemannian manifold locally isometric to~$\h^{p,q}$. We know from Theorem \ref{diamondGH} that~$\Omega(\Lambda)$ admits a Cauchy time function, however since the foliation defined by this function is not necessarily invariant by~$\rho$ we do not necessarily get a Cauchy time function on~$\Omega(\Lambda)/\rho$. In fact, we will see that such a function does not exist in general. It is however possible to exhibit a Cauchy surface of~$\Omega(\Lambda)$ which is stable by~$\rho$.

\begin{defn}
Let~$S$ be a smooth complete spacelike submanifold of~$\h^{p,q}$ of dimension~$p$. We will say that~$S$ is \emph{maximal} if it has zero mean curvature at each point.
\end{defn}

\noindent We know from the work of Seppi-Smith-Toulisse \cite{seppi2023completemaximalsubmanifoldspseudohyperbolic} that under our assumption, there must exist a unique maximal submanifold with boundary~$\Lambda$.

\begin{theo}[\cite{seppi2023completemaximalsubmanifoldspseudohyperbolic}, Theorem A]
Let~$\Lambda$ be an achronal non-purely lightlike~$(p-1)$-sphere in~$\partial \h^{p,q}$. Then there exists a unique maximal submanifold~$S(\Lambda)$ of~$\h^{p,q}$ with boundary~$\Lambda$.
\end{theo}

\noindent We will only be interested in the fact that since~$S(\Lambda)$ is unique, it must in particular be preserved by~$\rho$. Let us note that the proof of Theorem A of \cite{seppi2023completemaximalsubmanifoldspseudohyperbolic} is very involved, and that while it is believed that one should be able to build a spacelike submanifold stable by~$\rho$ in a purely geometric fashion, it is currently not known how. By corollary of Theorem \ref{diamondGH} we know that~$S(\Lambda)$ must be a Cauchy surface. Let us show that the quotient~$S(\Lambda)/\rho$ must then be a Cauchy surface of~$\Omega(\Lambda)/\rho$.

\begin{prop}\label{Cauchyquotient}
Let~$\Gamma$ be a discrete group acting properly conformally on a pseudo-Riemannian conformal space~$M$ and let us assume that~$M$ admits a Cauchy surface~$S$ stable by~$\Gamma$. Then~$S/\Gamma$ is a Cauchy surface of~$M/\Gamma$.
\end{prop}

\begin{proof}
Let~$\pi_1 \colon M \rightarrow M/\Gamma$ be the projection, let~$f \colon N_0 \rightarrow M/\Gamma$ be an inextendible causal map and let~$\pi_2 \colon \tilde{N_0} \rightarrow N_0$ be the universal covering of~$N_0$ with~$\tilde{f} \colon \tilde{N_0} \rightarrow M/\Gamma$ continuous such that~$\tilde{f} = f \circ \pi_2$. Let~$\hat{f} \colon \tilde{N_0} \rightarrow M$ verifying that~$\pi_1 \circ \hat{f} = \tilde{f}$. The map~$\hat{f}$ is also an inextendible causal map as any path converging in $M$ converges in $M/\Gamma$, therefore there exists a unique~$x \in \tilde{N_0}$ such that~$\hat{f}(x) \in S$, meaning that~$f(\pi_2(x)) \in S/\rho$. Since~$x$ is unique, this proves the result.
\end{proof}

\noindent In particular, this result tells us that~$\Omega(\Lambda)/\rho$ admits a Cauchy surface~$S(\Lambda)/\rho$ which is complete as a Riemannian manifold since~$S(\Lambda)$ is complete in~$\h^{p,q}$.

\begin{comment}

\begin{prop}\label{cocompactaction}
Let~$\rho \colon \Gamma \rightarrow \so(p,q+1)$ be a~$\h^{p,q}$-convex cocompact representation of a Gromov hyperbolic group~$\Gamma$ with boundary a spacelike~$(p-1)$-sphere in~$\widehat{\ein}^{p-1,q}$. Then every Cauchy surface of~$\Omega(\Lambda)/\rho$ is compact. 
\end{prop}

\begin{proof}
Since~$\rho$ is Anosov, the Gromov boundary of~$\Gamma$ must be homeomorphic to a~$(p-1)$-sphere by proposition \ref{DGKequivalence}, which in turns implies that the cohomological dimension of~$\Gamma$ must be equal to~$p$. Since furthermore~$\rho$ acts properly discontinuously on the contractible manifold~$S(\Lambda)$ of dimension~$p$, the action must be cocompact meaning that~$S(\Lambda)/\rho$ is compact. Finally, since~$\Omega(\Lambda)/\rho$ admits a foliation by timelike submanifolds of dimension~$q$, Proposition \ref{timelikefoliation} tells us that all Cauchy surfaces must be homeomorphic to each other, meaning that every Cauchy surface of~$\Omega(\Lambda)/\rho$ is compact.
\end{proof}

\end{comment}

\begin{prop}
The space~$\Omega(\Lambda)/\rho$ is globally hyperbolic.
\end{prop}

\begin{proof}
Let $f : N_0 \rightarrow \Omega(\Lambda)/\rho$ be an inextendible causal map. We must show that $N_0$ is homeomorphic to $\D^q$. Since $\Omega(\Lambda)/\rho$ admits a Cauchy surface $S(\Lambda)/\rho$, the space $\Omega(\Lambda)/\rho$ must be causal and $N_0$ must be simply connected. Let $\tilde{f} : N_0 \rightarrow \Omega(\Lambda)$ be such that $\pi \circ \tilde{f} = f$ for $\pi : \Omega(\Lambda) \rightarrow \Omega(\Lambda)/\rho$ the projection. The map $\tilde{f}$ is an inextendible causal map in $\Omega(\Lambda)$ which admits a Cauchy time function with values in $\D^q$, therefore $N_0$ must be homeomorphic to $\D^q$, hence the result.

Let us now show that $\Omega(\Lambda)/\rho$ has compact diamonds. Let us first consider the case $q \geq 3$. Let $f : \s^{q-1} \rightarrow \Omega(\Lambda)$ be a map with image in causal position. Since $q \geq 3$, $\s^{q-1}$ is simply connected. Let $\tilde{f} : \s^{q-1} \rightarrow \Omega(\Lambda)$ be a lift of $f$. Since $\Omega(\Lambda)$ admits a Cauchy time function, it is globally hyperbolic and the space $\C(Im(\tilde{f}))$ is compact, hence the result.

Assume now $q=2$. Let $f : \s^{1} \rightarrow \Omega(\Lambda)$ be a map with image in causal position and let $\hat{f} : \R \rightarrow \Omega(\Lambda)/\rho$ be its associated periodic map. Let $\tilde{f}$ be a lift of $\hat{f}$ to $\Omega(\Lambda)$ through a point $x \in \Omega(\Lambda)$. If $\tilde{f}$ is periodic, it can be seen as a map from $\s^1$ to $\Omega(\Lambda)$ in causal position which is enough to conclude since $\Omega(\Lambda)$ is globally hyperbolic. If $\tilde{f}$ is not periodic, then it must intersect the orbit of $x$ in an infinite amount of points. Therefore there exists a sequence $\gamma_n \in \Gamma$ such that for all $n$, the points $x$ and $\rho(\gamma_n) x$ are in causal position. Since the action of $\rho$ on $\Omega(\Lambda)$ is proper, the sequence $\rho(\gamma_n) x$ must have an accumulation point in the boundary of $\Omega(\Lambda)$ in $\widehat{\ein}^{p,2}$; this accumulation point $\ell$ must be in $\Lambda$, for example because the Hilbert distance in $\Omega(\Lambda) \subset \mathbb{P}(\R^{p, 3})$ from $\rho(\gamma_n) x$ to $S(\Lambda)$ must remain constant. Since for each $n$ the segment $[x, \rho(\gamma_n) x]$ is causal, the segment $[x, \ell]$ must also be causal which is impossible as $\ell$ is in $\Lambda$ and $x$ in $\Omega(\Lambda)$. This gives the result for $q=2$.

The result is well known in the Lorentzian case $q=1$, see for instance \cite{barbot2013deformations}, \cite{smai22}.
\end{proof}

\begin{rem}
This gives an example of a pseudo-Riemannian spacetime which is globally hyperbolic but does not admit a Cauchy time function. As shown in \cite{CollierTholozanToulisse}, when~$p=2$ and~$\rho$ is a maximal representation for a surface group~$\Gamma$,~$\Omega(\Lambda)/\rho$ is a fiber bundle over~$S/\rho$ given by a foliation by totally geodesic timelike disks. If~$\Omega(\Lambda)/\rho$ was to admit a Cauchy time function, it would trivialize this bundle; however we know that this bundle may not be trivial for some~$\rho$.
\end{rem}

\subsection{GH-regular representations as holonomies of $(\so_0(p,q+1), \mathbb{H}^{p,q})$-structures}

We have now proven that any GH-regular representation $\rho$ is the holonomy of a $(\so_0(p,q+1), \h^{p,q})$-structure $\Omega(\Lambda)/\rho$ which is globally hyperbolic and admits a complete Cauchy surface. Let us note another important property of those structures :

\begin{defn}
Let $M$ be a spacetime of signature $(p,q)$. We'll say that $M$ is time-convex if for any totally geodesic inextendible timelike immersion $f : N_0 \rightarrow M$ and for any two points $x,y$ in $N_0$, there exists a unique geodesic between $x$ and $y$ in $N_0$ for the pullback metric of $M$ by $f$.
\end{defn}

\begin{prop}
Let $\rho$ be a GH-regular representation. The space $\Omega(\Lambda)/\rho$ is time-convex.
\end{prop}

\begin{proof}
Let $f : \D^q \rightarrow \Omega(\Lambda)/\rho$ be a totally geodesic inextendible timelike map. Let $\tilde{f}$ be a lift of $f$ to $\Omega(\Lambda)$. Since $\Omega(\Lambda)$ is a proper projective convex set in $\mathbb{P}(\R^{p, q+1})$, there exists a unique geodesic in $\Omega(\Lambda)$ between any two points in the image of $\tilde{f}$. This geodesic is timelike and completely contained in the image of $\tilde{f}$. This yields the result.
\end{proof}

The GH-regular representations $\rho : \Gamma \rightarrow \so_0(p, q+1)$ are therefore holonomies of $(\so_0(p, q+1), \h^{p,q})$-structures which are globally hyperbolic, time-convex with a complete Cauchy surface. For $q=1$, it was shown by Mess in \cite{mess2007lorentz} that those properties are enough to characterize GH-regular representations. We are going to show that this is still true for $q \geq 2$.

\begin{theo}
    Let $M$ be a $(\so_0(p, q+1), \h^{p,q})$-structure which is globally hyperbolic, time convex and with a complete Cauchy surface. Then there exists a GH-regular representation $\rho : \pi_1(M) \rightarrow \so_0(p,q+1)$ such that $M$ is isometrically embedded in $\Omega(\Lambda)/\rho$.
\end{theo}

\begin{proof}
Let $M$ be as stated and let $D : \widetilde{M} \rightarrow \h^{p,q}$ be the developping map of $M$. Let $S$ be a complete Cauchy surface of $M$. The lift $\widetilde{S}$ of $S$ in $\widetilde{M}$ must also be a complete Cauchy surface of $\widetilde{M}$. Indeed, let $f : N_0 \rightarrow \widetilde{M}$ be an inextendible causal map. The projection $\pi \circ f$ remains an inextendible causal map in $\Omega(\Lambda)/\rho$, thus there exists a unique $x \in N_0$ such that $\pi \circ f(x) \in S$. This implies that $x$ is the unique point in $N_0$ such that $f(x) \in \widetilde{S}$, therefore $\widetilde{S}$ is a Cauchy surface in $\widetilde{M}$.

We will now show that $\widetilde{M}$ remains time convex. Let $f : N_0 \rightarrow \widetilde{M}$ be a totally geodesic inextendible timelike map, the projection $\pi \circ f : N_0 \rightarrow M$ remains a totally geodesic inextendible timelike map as $\pi$ is locally isometric. Thus for any two points $x,y$ in $N_0$, there exists a unique geodesic between $x$ and $y$ for the metric $f^* (\pi^* g_M)$. Since $\pi$ is locally isometric, $\pi^* g_M = g_{\widetilde{M}}$, which yields the result. 

Let us show that $D : \widetilde{S} \rightarrow \h^{p,q}$ is an embedding. Since $\widetilde{S}$ is complete, $D|_{\widetilde{S}}$ must be an inextendible spacelike map in $\h^{p,q}$. Since $\h^{p,q}$ is conformally equivalent to $\D^{p} \times \s^q$ where $\s^q$ is compact (thus complete) and $\D^p$ is simply connected, the projection on $\D^p$ is a Cauchy time function in $- \h^{p,q}$ and $D|_{\widetilde{S}}$ is the graph of a $1$-contracting map from $\D^p$ to $\s^q$. In particular, $\widetilde{S}$ is homeomorphic to $\D^p$ and $D|_{\widetilde{S}}$ is an embedding.

The boundary $\Lambda$ of $D(\widetilde{S})$ in $\h^{p,q}$ must then be an achronal $(p-1)$-sphere $\Lambda$ in $\partial \h^{p,q}$ which must also be non-purely lightlike since $\Omega(\Lambda)$ contains $D(\widetilde{S})$. Let $\rho : \pi_1(M) \rightarrow \so_0(p,q)$ be the action of the holonomy of $\widetilde{M}$ on $\h^{p,q}$ via $D$. Then $\Lambda$ must also be stable by $\rho$, the action of $\rho$ on $\Omega(\Lambda)$ must be proper and $\Omega(\Lambda)/\rho$ verifies all the previous properties. All that is left is to show that $D$ embeds $\widetilde{M}$ in $\Omega(\Lambda)$.

First, let's show that the image of $D$ is entirely contained in $\Omega(\Lambda)$. Assume this is not the case and take $x \in Im(D) \setminus \Omega(\Lambda)$. Since $x$ is not in $\Omega(\Lambda)$, there exists $y$ in $\Lambda$ such that $x$ and $y$ are causally related in $\widehat{\ein}^{p,q}$. Let us consider an inextendible map $f$ in $\widehat{\ein}^{p,q}$ containing both $x$ and $y$. The image of this map can never be in $\Omega(\Lambda)$ as any point in the intersection would be causally related to $y$; in particular, this map can never intersect $D(\widetilde{S})$. The preimage of this map by $D$ is non-empty as $x$ was assumed to be in the image of $D$, therefore it is an inextendible causal map of $\widetilde{M}$ which never meets $\widetilde{S}$. This is impossible as $\widetilde{S}$ is a Cauchy surface of $\widetilde{M}$. Therefore the image of $D$ must lie in $\Omega(\Lambda)$.

Let $\pazocal{F}$ be a foliation by totally geodesic timelike subspaces of dimension $q$ of $\widetilde{M}$. Such a foliation exists, for instance by pulling back one on $\Omega(\Lambda)$. We know that each of those leaves must be convex for the induced metric by $\widetilde{M}$, meaning that for each $x \in \widetilde{M}$, there exists a unique leaf $\pazocal{F}_x$ containing $x$, a unique $\pi_x$ both in $\widetilde{S}$ and in $\pazocal{F}_x$ as $\widetilde{S}$ is a Cauchy surface and a unique geodesic from $\pi_x$ to $x$ in $\pazocal{F}_x$. This gives an embedding of each leaf in $\Omega(\Lambda)$ as the set $\Omega(\Lambda)$ is also time convex, therefore $D$ is a $\rho$-equivariant embedding of $\widetilde{M}$ in $\Omega(\Lambda)$ and $M$ can be embedded within $\Omega(\Lambda)/\rho$. This ends the proof.
\end{proof}

\begin{rem}
In particular, the GH-regular representations are exactly the holonomies of \emph{maximal} globally hyperbolic $(\so_0(p, q+1), \h^{p,q})$-structures with complete Cauchy surface which are time convex.
\end{rem}

In the case $q=1$, Mess proves the result without the assumption that the timelike spaces are convex; this is because this property is implied by the fact that the space is globally hyperbolic. When $q > 1$ this condition is necessary. Let us exhibit a counter-example without this assumption for $q = 2$. Let $\Lambda$ be a positive Möbius $(p-1)$-sphere in $\partial \h^{p,2}$. Let $S(\Lambda)$ be the totally geodesic copy of $\h^p$ in $\h^{p,2}$ with boundary $\Lambda$ (in particular, it is the same $S(\Lambda)$ as given by Seppi-Smith-Toulisse). The space $\Omega(\Lambda) \setminus S(\Lambda)$ is not simply connected as its fundamental group is equal to $\mathbb{Z}$. Let $M$ be the universal cover of $\Omega(\Lambda) \setminus D(\Lambda)$.

\begin{prop}
The space $M = \widetilde{\Omega(\Lambda) \setminus D(\Lambda)}$ is locally isometric to $\h^{p,2}$, is globally hyperbolic and admits a complete Cauchy surface; however, it does not embed in $\Omega(\Lambda)$.
\end{prop}

\begin{proof}
It is clear that $M$ is locally isometric to $\h^{p,2}$. Furthermore, since $\Lambda$ is a Möbius sphere, $\Omega(\Lambda)$ is conformally equivalent to $\h^p \times \h^2$ and $\Omega(\Lambda) \setminus D(\Lambda)$ is conformally equivalent to $\h^p \times (\h^2 \setminus \{x\})$ where $x$ is the center of $\h^2$. Finally, $M$ if conformally equivalent to $\h^p \times \widetilde{(\h^2 \setminus \{x\})}$. Since $\h^p$ is complete and $\widetilde{\h^2 \setminus \{x\}}$ is simpky connected, the projection on $\widetilde{\h^2 \setminus \{x\}}$ is a Cauchy time function. In particular the space $M$ is globally hyperbolic and any subset $S = \h^p \times \star$ is a Cauchy surface. The projection of $S$ in $\Omega(\Lambda) \setminus D(\Lambda)$ is a positive submanifold with boundary $\Lambda$, therefore it is complete with the metric induced by $\h^{p,2}$. If $M$ was to be embedded in a space of the $\Omega(\Lambda')/\Gamma$, we would have $\Gamma=\{Id\}$ since $M$ is simply connected and $\Lambda' = \Lambda$ since the projection of $S$ has $\Lambda$ as boundary. However $M$ cannot be embedded into $\Omega(\Lambda)$, for instance because the subsets $\star \times \widetilde{\h^2 \setminus \{x\}}$ are totally geodesic timelike submanifolds of $M$ with infinite area, something which does not exist in $\Omega(\Lambda)$ where totally geodesic timelike submanifolds have area at most the area of a $q$-sphere of radius $1$.
\end{proof}

Let us finish by discussing the case where $\Omega(\Lambda)/\rho$ admits a Cauchy surface which is not only complete but also compact. In this case, $\rho$ must act properly and cocompactly on a Cauchy surface of $\Omega(\Lambda)$ which are all contractible of dimension $p$, therefore $\Gamma$ must have cohomological dimension equal to $p$. Since we have used the GH-regular terminology from \cite{barbot2013deformations}, the correct name for those representations would be \emph{GHC-regular} (Global Hyperbolic Compact), however those representations have already been extensively studied in many works including Danciger-Guéritaud-Kassel in \cite{danciger2017convex}, \cite{DGK17} and Beyrer-Kassel in \cite{beyrer2023mathbbhpqconvex}. In those works, GH-regular representations with compact Cauchy surfaces in $\Omega(\Lambda)/\rho$ are called \emph{spacelike cocompact}; they are of particular interests as it was shown by Beyrer-Kassel in \cite{beyrer2023mathbbhpqconvex} that the set of spacelike cocompact representations in the character variety of $\Gamma$ in $\so_0(p, q+1)$ is a union of connected components. In the special case where $\Gamma$ is a hyperbolic group, it was shown that the sphere $\Lambda$ must be acausal (Beyrer-Kassel, \cite{beyrer2023mathbbhpqconvex}) and that the representation $\rho$ must be $P_1$-Anosov in $\so_0(p,q+1)$ (Danciger-Guéritaud-Kassel, \cite{danciger2017convex}, \cite{DGK17}). In this case those representations are also called $\h^{p,q}$-convex cocompact.

\bibstyle{plain}
\bibliography{refs}

\end{document}